\documentclass{article}

\usepackage{geometry}
\geometry{
	hmargin={25mm, 25mm},
	vmargin={25mm, 25mm},
	headsep=10mm,
	headheight=5mm,
	footskip=10mm
}

\usepackage[T1]{fontenc}
\usepackage[utf8]{inputenc}
\usepackage{lmodern,amsmath,amssymb,systeme,amsthm,amsfonts,graphicx,bbm,cite,needspace,bm}
\usepackage[inline]{enumitem}
\usepackage[breaklinks]{hyperref}

\numberwithin{equation}{section}

\newtheorem{theorem}{Theorem}[section]
\newtheorem{lemma}[theorem]{Lemma}
\newtheorem{claim}[theorem]{Claim}

\newtheorem{proposition}[theorem]{Proposition}

\theoremstyle{definition}
\newtheorem{remark}[theorem]{Remark}
\newtheorem{notation}[theorem]{Notation}
\newtheorem{definition}[theorem]{Definition}

\newcommand{\pr}{\mathbb{P}}
\newcommand{\ev}{\mathbb{E}}
\newcommand{\indic}[1]{\mathbbm{1}{\left[\,{#1}\,\right]}}
\newcommand{\norm}[1]{\left\lVert#1\right\rVert}

\newcommand{\Pspace}{\mathcal{P}_{\Delta}}
\newcommand{\nonreg}{\mathcal{P}_\Delta^\circ}
\newcommand{\nearreg}[1]{\mathcal{P}_{\Delta,\, #1}^\circ}
\newcommand{\Rspace}{\mathcal{R}_{\Delta}}
\newcommand{\pfromr}{\underline{p}(\varepsilon,\underline{r})}
\newcommand{\tunder}{\underline{t}_{\underline{p}, \delta}}
\newcommand{\tover}{\overline{t}_{\underline{p}, \delta}}
\newcommand{\toverone}{\overline{t}_{\underline{p}, 1}}
\newcommand{\wunder}{\underline{w}_{\underline{p}, \delta}}
\newcommand{\wover}{\overline{w}_{\underline{p}, \delta}}
\newcommand{\werror}{\omega_{\underline{r}}}
\newcommand{\Toperator}[1]{T_{\underline{p},#1}}
\newcommand{\lamp}{\lambda_{\underline{p}}}
\newcommand{\lamunder}[1]{\underline{\lambda}_{\underline{p},#1}}
\newcommand{\lamover}[1]{\overline{\lambda}_{\underline{p},#1}}
\newcommand{\lamerror}{\Lambda_{\underline{r}}}
\newcommand{\tauunder}[1]{\tau^-_{\underline{p},\,#1}}
\newcommand{\tauover}[1]{\tau^+_{\underline{p},\,#1}}
\newcommand{\Hp}{H_{\underline{p}}}
\newcommand{\Herror}{\mathcal{H}_{\underline{r}}}
\newcommand{\Hint}{I_{\underline{p}}}
\newcommand{\Herrorint}{\mathcal{I}_{\underline{r}}}
\newcommand{\Hintlam}{\widetilde{I}_{\underline{p}}}
\newcommand{\Hintunder}[1]{\underline{I}_{\underline{p},\, #1}}
\newcommand{\Hintover}[1]{\overline{I}_{\underline{p},\, #1}}
\newcommand{\tc}{\hat{t}_c(\underline{p})}
\newcommand{\diffunder}{\underline{\phi}_{\underline{p}, \delta}}
\newcommand{\diffover}{\overline{\phi}_{\underline{p}, \delta}}
\newcommand{\werrorfirst}{\alpha_{\underline{r}}}
\newcommand{\werrorsecond}{\beta_{\underline{r}}}
\newcommand\puteqnum{\hfill\refstepcounter{equation}\textup{(\theequation)}}

\title{Critical time of the almost 2-regular\\random degree constrained process}

\author{Bal\'azs R\'ath \thanks{Department of Stochastics, Institute of Mathematics, Budapest University of Technology and Economics,
M\H{u}egyetem rkp.\ 3., H-1111 Budapest, Hungary; HUN-REN--BME Stochastics Research Group, Budapest University of Technology and Economics, M\H{u}egyetem rkp.\ 3., H-1111 Budapest, Hungary;
HUN-REN Alfr\'ed R\'enyi Institute of Mathematics, Re\'altanoda utca 13-15, H-1053 Budapest,
Hungary. Email:~\href{mailto:rathb@math.bme.hu}{\tt rathb@math.bme.hu}, \href{mailto:szokemp@math.bme.hu}{\tt szokemp@math.bme.hu}.}
\and
M\'arton Sz\H{o}ke \footnotemark[1]}

\begin{document}

\maketitle

\begin{abstract} 
We study the phase transition of the random degree constrained process (RDCP), a time-evolving random graph model introduced by Ruci\'nski and Wormald \cite{RW92} that generalizes the random $d$-process to the non-regular setting: each vertex of the complete graph $K_n$ has its pre-assigned degree constraint (i.e., a number from the set $\{2,\dots,\Delta \}$), we attempt to add the edges one-by-one in a uniform random order, but a new edge is added only if it does not violate the degree constraints at its end-vertices. Warnke and Wormald~\cite{WW} identified the critical time of the RDCP when the giant component emerges as~${n \to \infty}$. R\'ath, Sz\H{o}ke and Warnke~\cite{RSW} identified the local weak limit of the RDCP and gave an alternative characterization of the critical time in terms of the principal eigenvalue of the branching operator of the multi-type branching process that arises as the local limit object. 

In the current paper we use this spectral characterization to study the critical time of the RDCP in the almost $2$-regular case, i.e., when the degree constraint of most of the vertices is equal to $2$. In this case the giant component emerges quite late, and our main result provides the precise asymptotics of the critical time as the model approaches $2$-regularity.
Interestingly, our formula asymptotically matches the well-known Molloy--Reed formula~\cite{MR}, despite the fact that Molloy, Surya and Warnke~\cite{MSW} proved that the final graph of the RDCP is not contiguous to the configuration model with the same degree sequence.

\vspace{0.5em}
\noindent
\textbf{Keywords:} random degree constrained process; giant component phase transition; critical time; Perron--Frobenius theory; ordinary differential equations

\noindent
\textbf{AMS MSC 2020:} 05C80; 15B48; 82B26; 34A30
\end{abstract}

\section{Introduction}\label{sec:intro}

\textbf{Giant and gelation.} Emergence of the giant component in dynamic networks and gelation in the kinetic theory of polymers have a lot in common. Both phenomena are examples of connectivity/percolation phase transitions, but the rigorous theory of random graphs tends to use combinatorial or algorithmic tools (e.g.\ exploration processes) while the analytic theory of chemistry tends to use differential equation methods.
A survey of these two approaches is given in~\cite{A99}. Let us recall three famous examples that have been studied using both approaches: 
\begin{itemize}
\item The fluid limit of the densities of connected components of size $1,2,3,\dots$ in the dynamic Erd\H{o}s--R\'enyi graph~\cite{ER60} is exactly the solution to the system of differential equations known as Flory's equation \cite{F41} by chemists and Smoluchowski's coagulation equation with multiplicative kernel~\cite{S16} by physicists.
\item The canonical random (multi)graph with a given degree sequence is the configuration model \cite{B80}: we pick a uniformly random perfect matching on the collection of half-edges that arise from the prescribed degree sequence.
The dynamic version of the configuration model is equivalent to the chemical system where each monomer has a predefined functionality, and monomers connect to each other at a rate proportional to the product of the number of their unused functionalities: the resulting Flory-type system of differential equations was studied e.g.\ in~\cite{K18}, analytically recovering the explicit formula for the percolation threshold of the configuration model (i.e., the Molloy--Reed formula~\cite{MR}). 
\item The random $d$-process, introduced in~\cite{RW92}, is a dynamical (simple) random graph model in which a vertex saturates when its degree reaches~$d$, and at each step we add an edge that we uniformly choose from the set of (not yet added) edges that still have two unsaturated end-vertices. The Flory-type system of differential equations that describes the phase transition of the $d$-process was studied in~\cite{BK}.
\end{itemize}
Our current paper studies the phase transition of the \emph{random degree constrained process (RDCP)}, a generalization of the random $d$-process, using the \emph{analytic approach}.

\smallskip

\textbf{RDCP.} The RDCP was in fact already introduced in~\cite{RW92}: 
we attempt to add the edges of the complete graph $K_n$ one-by-one in a uniform random order, but a new edge is added only if it does not violate the pre-assigned degree constraints at its end-vertices. We say that a vertex saturates when its degree reaches its pre-assigned degree constraint. In~\cite{RW92} it is shown that all vertices in the final graph of the RDCP are saturated with high probability as ${n \to \infty}$.
The random $d$-process is the special case of the RDCP in which every vertex has the same fixed degree constraint~${d \in \mathbb{N}_+}$.

\smallskip

\textbf{Difference between the configuration model and the RDCP.} The configuration model and the final graph of the RDCP are two different constructions that achieve the same goal: to produce a random graph with a given degree sequence. It is proved in~\cite{MSW} that if the asymptotic degree distribution is not concentrated on a single value (i.e., if the graph is not nearly regular), then there exists an event~$A_n$ (defined in terms of neighborhood statistics) such that the probability of~$A_n$ in the configuration model goes to~$1$ as~${n \to \infty}$, while the probability that the final graph of the RDCP with the same degree sequence satisfies the same~$A_n$ goes to~$0$ as $n \to \infty$. In plain words, the two random graph models are genuinely different.

\smallskip 

\textbf{Phase transition of the RDCP.} The emergence of the giant component in the RDCP was proved in~\cite{WW}. An analytic characterization of the critical time of the random $d$-process (i.e., the time when the giant emerges) was given in~\cite{BK}, and~\cite{WW} gave a different analytic characterization which extends to the case of the RDCP. Unlike the Molloy--Reed formula~\cite{MR} for the configuration model, these characterizations of the critical time of the RDCP are not at all explicit (but they produce the same numerical approximations of the critical time of the random $d$-process up to a reasonable precision). The method of~\cite{BK} is to study the non-linear PDE that arises from the Flory-type equations if one takes the generating function and to define the critical time as the time when conservation of mass contained in small components breaks down. The method of~\cite{WW} is to write a closed system of differential equations for quantities related to susceptibility and define the critical time as the time when the susceptibility blows up.

\smallskip 

\textbf{Spectral characterization of the critical time.}
In our current paper, we use a third alternative analytic approach to study the critical time of the RDCP. This approach does not give an explicit formula either, but it allows us to derive new and interesting asymptotic formulas for the critical time.
Our approach relies on the notion of local weak convergence of graph sequences, also known as Benjamini--Schramm convergence. Formally, local weak convergence in probability is equivalent to the concentration of all local neighborhood statistics as $n \to \infty$, cf.~\cite[Section~2]{H24b}. In the case of the Erd\H{o}s--R\'enyi graph as well as the configuration model, the local weak limit is the family tree of a Bienaym\'e--Galton--Watson branching process, and thus the critical time is the time when the expected number of offspring of this branching process is equal to~$1$ (this approach easily recovers the Molloy--Reed formula). Together with Warnke, in \cite{RSW} we identify the local weak limit of the RDCP at any intermediate time as the family tree of a \emph{multi-type branching process} with type space~$\mathbb{R}_+$, and characterize the critical time as the time when the principal eigenvalue of the associated branching operator is equal to $1$. The branching operator of the RDCP is an integral operator that can be studied using a combination of Perron--Frobenius and Sturm--Liouville theories.

\smallskip 

\textbf{Almost $2$-regular RDCP.} We use this approach to study the critical time of the RDCP in the almost $2$-regular case, i.e., when the fraction of vertices with degree constraint~$2$ is ${1-\varepsilon}$, and the degree constraints of the rest of the vertices are larger than~$2$. Structurally, this implies that the final graph has approximately~${\varepsilon \cdot n}$~\emph{hubs} (i.e., vertices of degree at least~$3$) connected by long simple paths. Thus, if one removes even a small fraction of its edges, it easily becomes disconnected. In other words, the critical time of this graph is rather big.
We provide an asymptotic formula for the critical time in terms of the perturbative parameter~$\varepsilon$ and the relative degree distribution of the hubs. Interestingly, our asymptotic formula matches the naive guess that one obtains from the Molloy--Reed formula in the $\varepsilon \to 0$ limit, despite the fact that for any fixed $\varepsilon>0$ the final graph of the RDCP is not contiguous to the configuration model (as proved in~\cite{MSW}) and edges of the final graph of the RDCP arrive in a correlated way (and thus naively modeling the RDCP at an intermediate time with Bernoulli bond percolation on the final graph is incorrect).

\smallskip 

\textbf{A conjecture about the regular case.} Our asymptotic result suggests that the differences between the RDCP and the configuration model vanish in the almost $2$-regular case as $\varepsilon \to 0$, hinting at the universality of the structure of almost $2$-regular / barely supercritical / near-critical versions of these random graph models. However, let us note that even in the degenerate $\varepsilon =0$ case there are open questions about the similarity of these two random graph models: Wormald conjectured that the random $d$-regular graph (i.e., the special case of the configuration model when all degrees are equal to $d$) and the random $d$-process (i.e., the final graph of the RDCP when all degrees are equal to $d$) cannot be distinguished with high confidence (for the precise formulation, see \cite[Conjecture~6.3]{W99b}). However, note that the local weak limit of both of these random graph models is the $d$-regular infinite tree.

\smallskip 

\textbf{Two time parametrizations.} Recall that one constructs the RDCP by attempting to add the edges of the complete graph $K_n$ one-by-one and an edge is added only if it does not violate the degree constraints.
Note that there are two conventions regarding the time parametrization of the RDCP in the literature:
\begin{enumerate}[label=(\roman*)]
\item\label{item:WW_parametrization} e.g.~\cite{WW} uses the continuous-time parameter~$s$ to describe the state of the RDCP graph when it has approximately ${n\cdot s}$ successfully deposited edges, while 
\item\label{item:BK_param} e.g.~\cite{BK} uses the continuous-time parameter~$t$ to describe the state of the RDCP graph after attempting to deposit approximately ${n\cdot t/2}$ edges.
\end{enumerate}
We will state our main result using both of the above time parametrizations, but in our proof we will use convention~\ref{item:BK_param}, following our earlier work~\cite{RSW}.

\smallskip 

\textbf{Structure of the rest of Section~\ref{sec:intro}.} In Section~\ref{subsec:rdcp_intro} we rigorously define the RDCP on $K_n$. In Section~\ref{subsec:intro_tc} we introduce the notion of the critical time $t_c$ (using the time parametrization~\ref{item:WW_parametrization}) by recalling some of the main results of \cite{WW}. In Section~\ref{subsec:almost_2reg} we introduce the notation specific to the almost $2$-regular RDCP, state our main result on the asymptotics of $t_c$ (using convention~\ref{item:WW_parametrization}) and discuss the similarity of our result and the Molloy--Reed formula despite the difference between the RDCP and the configuration model.
In Section~\ref{subsec:cont_time_intro} we introduce the continuous-time RDCP (this time parametrization coincides with convention~\ref{item:BK_param} in the fluid limit) and re-state our main result in terms of the asymptotics of the continuous-time critical time $\hat{t}_c$ of the almost $2$-regular RDCP. In Section~\ref{subsec:rel_lit}
we recall some related results from the literature that help to put our main result in context. In Section~\ref{subsec:intro_structure_of_rest} we outline the structure of the rest of this paper.

\subsection{The random degree constrained process}\label{subsec:rdcp_intro}

In Section~\ref{subsec:rdcp_intro} we rigorously introduce the discrete-time random degree constrained process (RDCP) on the complete graph~$K_n$.

Let the vertex set of the graph be $[n]=\{1,\dots,n\}$. We assign degree constraints to each vertex of the graph.
The degree constraints are independent random variables with the same probability distribution. Let ${\underline{p}=(p_k)_{k=2}^{\infty}}$ be a probability distribution on $\{\, 2, 3, \dots \,\}$, where $p_k$ is the probability that a vertex has a maximum allowed degree of~$k$. We assume that there is a universal upper bound $\Delta \in \mathbb{N}_+$ such that 
\begin{equation}\label{eq:upper-bound-assumption}
p_k = 0 \quad \text{for any} \quad k > \Delta.
\end{equation}
Independently of the others, each vertex~$v \in [n]$ is assigned a \textit{degree constraint} $d(v)$ drawn from this distribution~$\underline{p}$, i.e.,~$p_k$ is approximately the fraction of vertices with degree constraint~$k$. Note that by~\eqref{eq:upper-bound-assumption}, each degree constraint is at most~$\Delta$.

\begin{definition}[RDCP in discrete time with host graph $K_n$, {\cite[Definition~1.13]{RSW}}]\label{def:rdcp_disc}
We define the discrete-time random degree constrained process~$\big(G^{n,\ell}_{\underline{p}}\big)_{\ell \ge 0}$, where $\ell \in \mathbb{N}$ is a step parameter and~$G^{n,0}_{\underline{p}}$ is the empty graph with vertex set~$[n]$.
For any~${\ell \in \mathbb{N}_+}$ we then obtain $G^{n,\ell}_{\underline{p}}$ by adding exactly one new edge to $G^{n,\ell-1}_{\underline{p}}$. 
This new edge is chosen uniformly from the set of edges not in~$G^{n,\ell-1}_{\underline{p}}$, for which both end-vertices have degree strictly less than their own degree constraint. If no more edges can be added to the graph without violating any degree constraint, we arrive at the final graph. 
We denote by~$M_n(\underline{p})$ the random number of edges in the final graph $G^{n,M_n(\underline{p})}_{\underline{p}}$. 
For mathematical convenience, we also define $G^{n,\ell}_{\underline{p}}:=G^{n,M_n(\underline{p})}_{\underline{p}}$ for $\ell > M_n(\underline{p})$. 
\end{definition}

It follows e.g.\ from the results of~\cite{RSW} that $M_n(\underline{p})/n$ converges in probability to $\frac{1}{2}\mathbb{E}(D)$ as $n \to \infty$, where~$D$ is a random variable with distribution $\underline{p}$, i.e., $\mathbb{E}(D)=\sum_{k=2}^{\Delta} k\cdot p_k$.

\begin{remark}[On the choice of the host graph]
For convenience, we only defined the RDCP on the complete graph~$K_n$ in Definition~\ref{def:rdcp_disc}. This is the standard choice in the literature (see e.g.~\cite{H24a, MSW, RW92, RW97, RW02, RW23, Se13, WW}). However, it is worth noting that in~\cite{RSW} the RDCP is defined on more general host graphs. There, the authors allow underlying host graphs $G^n$ that form a high-degree almost regular graph sequence in the sense of~\mbox{\cite[Definition~1.3]{NP}} (loosely speaking, a graph sequence $(G^n)$ for which most of the degrees in $G^n$ are close to $r_n$ and the number of edges is close to $ \frac{1}{2} |V(G^n)| \cdot r_n $ where $r_n \to \infty$), which includes $G^n=K_n$ as a special case. However, according to~\cite[Theorem~1.12]{RSW}, the local weak limit of the RDCP does not depend on the choice of the high degree almost regular host graph sequence~$(G^n)$, but only on the distribution~$\underline{p}$ of the degree constraints. In the current paper we will directly work with the local weak limit object since the critical time of the RDCP coincides with the critical time when the extinction/survival phase transition occurs in the multi-type branching process that arises as the local weak limit. 
Let us note that the question of the uniqueness of the giant component is more subtle (cf.\ \cite[Remark~1.46]{RSW}) as the answer can depend on the choice of the sequence of high-degree almost regular host graphs. Uniqueness of the giant is currently only proved when $G^n=K_n$, cf.~\cite{WW}.
This is one more reason why we chose to stick with the host graph $K_n$ in Definition~\ref{def:rdcp_disc}.
\end{remark}

\subsection{Critical time}\label{subsec:intro_tc}

In this paper, we study the so-called \textit{critical time} of the RDCP. Warnke and Wormald~\cite[Theorem~1.6]{WW} proved that if $\underline{p}$ satisfies~\eqref{eq:upper-bound-assumption}, then the discrete-time RDCP on the complete graph~$K_n$ undergoes a \textit{phase transition}:
there exists a critical time~${t_c = t_c(\underline{p})}$ such that:
\begin{itemize}
\item if $s<t_c(\underline{p})$ is fixed and~${n \gg 1}$, then the size of the largest connected component of the discrete-time RDCP graph $G_{\underline{p}}^{n, \lfloor n \cdot s \rfloor }$ is $\mathcal{O}(\ln(n))$ with high probability, but

\item if $s>t_c(\underline{p})$ is fixed and $n \gg 1$, then $G_{\underline{p}}^{n, \lfloor n \cdot s \rfloor }$ has a giant connected component (i.e., a component of size~$\Theta(n)$) with high probability. 
\end{itemize}

The main result of this paper concerns the asymptotics of $t_c(\underline{p})$ when $p_2 \to 1$.

\subsection{Almost 2-regular RDCP}\label{subsec:almost_2reg}

We study the critical time of the RDCP in a regime proposed by Warnke and Wormald (see \cite[Section~6]{WW}): the \textit{almost $2$-regular} case. Warnke and Wormald asked about the behavior of the largest component in the final graph in the almost $2$-regular RDCP. We address a different (albeit related) question and study how the critical time behaves as the distribution of degree constraints gets closer to the $2$-regular case.

In our setting, the degree constraints are almost always equal to $2$. Specifically, the probability that a vertex has a degree constraint equal to $2$ is ${p_2 = 1 - \varepsilon}$, where ${\varepsilon>0}$ is very small. 

Let us introduce the notation $\underline{p}=\chi_2$ for the degree constraint distribution $\underline{p}$ corresponding to the $2$-regular case, i.e., when $p_2=1$ and $p_k=0$ for any $k=3, \, \dots, \, \Delta$.

\begin{remark}[The critical time of the random $2$-process]\label{rem:chi_2_discr}
If~${\varepsilon = 0}$ (i.e., if $\underline{p}=\chi_2$), then the degree constraint of every vertex is equal to $2$. The number $M_n(\chi_2)$ of edges in the final graph satisfies $\mathbb{P}(M_n(\chi_2)=n)\to 1$ as $n \to \infty$ (cf.~\cite{RW92}); thus, $M_n(\chi_2)/n$ converges to~$1$ in probability as~${n \to \infty}$. Furthermore, the local weak limit of the final graphs is a bi-infinite path (cf.~\cite[Corollary~1.18]{RSW}), and the final graph has macroscopic cycles (cf.~\cite[Theorem~1.3]{WW}). However, for any ${0\leq s<1}$, the largest connected component of~$G_{\chi_2}^{n, \lfloor n \cdot s \rfloor }$ is of size $\mathcal{O}(\ln(n))$ with high probability (cf.~\cite[Remark~1.4]{WW}). Thus, in this case we have~${t_c(\chi_2)=1}$.
\end{remark}

We answer the natural question of how fast the critical time~$t_c(\underline{p})$ converges to $1$ in the almost $2$-regular setting as ${\varepsilon \to 0_+}$. Our main result provides a precise asymptotic formula for this relationship, see Theorem~\ref{thm:tc_asymp_behavior_disc}. Before stating this result, we introduce some notation.

\begin{notation}[Space of degree constraint distributions]\label{not:deg_constraint_distr}
Let $\Delta \in \mathbb{N}_+$. Let us denote by $\Pspace$ the set of probability measures on $\{\, 2, 3, \dots, \Delta \, \}$, i.e.,
\begin{equation*}
\Pspace := \left\{ \,(p_2, \, p_3, \, \dots, \, p_\Delta ) \; \Big| \; p_k \ge 0, \; k=2, \, 3, \, \dots, \, \Delta, \; \sum \limits_{d=2}^\Delta p_d = 1 \, \right\}.
\end{equation*}

For any $\underline{p} \in \Pspace$, we define $p_k := 0$ for $k=0$, $k=1$ and for each $k > \Delta$ (cf.~\eqref{eq:upper-bound-assumption}). 

Recall that $\chi_2 \in \Pspace$ is the degree constraint distribution corresponding to the $2$-regular case, i.e., when $p_k = \indic{k=2}$. We also introduce the set
\begin{equation}\label{eq:non_reg}
\nonreg := \Pspace \setminus \chi_2 = \left\{ \,(p_2, \, p_3, \, \dots, \, p_\Delta ) \; \Big| \; 0 \le p_2 < 1, \; p_k \ge 0, \; k=3, \, 4, \, \dots, \, \Delta, \; \sum \limits_{d=2}^\Delta p_d = 1 \, \right\}. 
\end{equation}
\end{notation}

\begin{notation}[Convergence in distribution]
Let $(\underline{p}^m)_{m=1}^\infty$ be a sequence in the space $\Pspace$. We write $\underline{p}^m \Rightarrow \underline{p}$ if $\underline{p}^m$ converges in distribution to~$\underline{p}$ as~$m \to \infty$, i.e., if~$p^m_k \to p_k$ as $m \to \infty$ for any $k \in \{2, \, \dots, \, \Delta\}$. In particular, ${\underline{p}^m \Rightarrow \chi_2}$ is equivalent to~${p^m_2 \to 1}$.
\end{notation}

Now we are ready to state our main theorem. Recall the definition of~$t_c(\underline{p})$ from Section~\ref{subsec:intro_tc}.

\begin{theorem}[Almost $2$-regular asymptotics of the critical time]\label{thm:tc_asymp_behavior_disc}
Let $(\underline{p}^m)_{m=1}^\infty$ be a sequence in $\nonreg$ such that $\underline{p}^m \Rightarrow \chi_2$ as $m \to \infty$. Then we have
\begin{equation}\label{eq:tc_asymp_disc}
\lim \limits_{m \to \infty} \frac{1-t_c(\underline{p}^m)}{\sum \limits_{k=3}^\Delta (k^2-3k+2)p^m_k}=\frac 12.
\end{equation}
\end{theorem}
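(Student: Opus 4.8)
The plan is to work in the continuous-time parametrization (convention \ref{item:BK_param}) and use the spectral characterization of the critical time from \cite{RSW}: the critical time $\hat t_c(\underline{p})$ is the time at which the principal eigenvalue $\lamp$ of the branching operator $\Toperator{t}$ of the local-limit multi-type branching process first reaches $1$. The key quantity is $\sum_{k=3}^\Delta (k^2-3k+2)p_k = \sum_{k=3}^\Delta (k-1)(k-2) p_k$, which is exactly the second factorial moment of the degree constraint distribution restricted to the hubs, and which quantifies ``how far'' $\underline{p}$ is from $\chi_2$ at second order. The strategy is a perturbative expansion around $\underline{p}=\chi_2$: when $\varepsilon \to 0$ the branching operator degenerates to the transfer operator of a bi-infinite path (for which the principal eigenvalue equals $1$ precisely at $t=1$, cf.\ Remark~\ref{rem:chi_2_discr}), and we need the first-order correction to the time at which $\lamp[t]=1$.

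First I would set up, for a given $\underline{p} \in \nonreg$, a pair of explicit scalar equations (or a low-dimensional fixed-point system) characterizing $\lamp[t]=1$. Because most vertices have degree constraint $2$, the branching process spends long stretches moving along degree-$2$ vertices (which act as ``conductors'' with unit multiplication) and only occasionally hits a hub of degree $k\ge 3$, which can create $k-1$ offspring. Heuristically, the ``effective'' multiplication rate seen over one hub-to-hub segment, weighted by the probabilities of the unsaturated states of the hub at the relevant time, should be $1$ at criticality; expanding this to leading order in the hub density yields the denominator $\tfrac12 \sum_{k=3}^\Delta(k-1)(k-2)p_k$ on the right-hand side. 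Concretely, I would: (i) recall from \cite{RSW} the ODE system governing the relevant state densities of the RDCP (the quantities $\underline{t}$, $\overline{t}$, $\lambda$ etc.\ for which the macros in the preamble are reserved) and the formula for the branching operator $\Toperator{t}$; (ii) identify the $\varepsilon=0$ solution explicitly and show the operator at $\chi_2$ has principal eigenvalue $1$ exactly at $t=1$ with a one-dimensional kernel spanned by an explicit positive eigenfunction; (iii) perform a first-order perturbation (à la Rayleigh--Schrödinger / implicit function theorem for the eigenvalue equation), differentiating the criticality condition $\lamp[t]=1$ with respect to the perturbation parameter along the curve $\underline{p}^m$, to extract $\mathrm d t_c$; (iv) translate back to the discrete-time parametrization via $t_c = 1 - \tfrac12(\text{something})$ using the relation between $s$ and $t$ (recall $M_n(\underline{p})/n \to \tfrac12\ev(D)$ and $\ev(D)=2+\sum_{k\ge3}(k-2)p_k$, so the two time-scales differ at exactly the order of the perturbation and this must be handled carefully).

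The main obstacle I anticipate is controlling the perturbation \emph{uniformly} and rigorously rather than formally. The branching operator lives on the non-compact type space $\mathbb R_+$, so standard finite-dimensional eigenvalue-perturbation theory does not apply off the shelf; one needs the Perron--Frobenius / Sturm--Liouville structure emphasized in the excerpt to guarantee (a) a spectral gap for the limiting operator at $t=1$, (b) continuity/differentiability of $\lamp[t]$ and of the critical time in $\underline{p}$ near $\chi_2$, and (c) that the error terms are genuinely $o\!\big(\sum_{k\ge3}(k-1)(k-2)p^m_k\big)$ and not merely $O$ of the same order. I would handle this by sandwiching: construct explicit sub- and super-solutions (hence the preambled macros $\tauunder{\cdot}$, $\tauover{\cdot}$, $\lamunder{\cdot}$, $\lamover{\cdot}$, $\diffunder$, $\diffover$), i.e.\ two families of degree-constraint-like comparison processes whose critical times bracket $t_c(\underline{p}^m)$ and whose $1-t_c$ both behave like $\tfrac12\sum_{k\ge3}(k-1)(k-2)p^m_k$ to leading order, and then squeeze. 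A secondary technical point is that the linear term $\sum_{k\ge3}(k-2)p_k$ (the ``first-order mass defect'' coming from hubs having \emph{more} than two half-edges) a priori enters at a lower order than the quadratic target $\sum_{k\ge3}(k-1)(k-2)p_k$; a delicate part of the argument is showing this linear contribution cancels between the two time-parametrizations (or, more precisely, that its effect on $\lamp$ vanishes because adding a half-edge to an already-present path vertex does not change first-order connectivity), leaving the genuinely second-order branching effect as the dominant term — this cancellation is, I expect, the conceptual crux that also explains the coincidence with the Molloy--Reed formula noted in the abstract.
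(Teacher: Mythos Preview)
Your high-level instinct (spectral characterization from \cite{RSW}, perturb around $\chi_2$, sandwich with explicit test objects) is the paper's strategy, but there is a genuine conceptual gap in your plan that would make the implicit-function/Rayleigh--Schr\"odinger step fail as written.

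You assert that ``the operator at $\chi_2$ has principal eigenvalue $1$ exactly at $t=1$'' and propose to expand the criticality condition around that point. In the continuous-time parametrization (which is the one the branching operator $\Toperator{\hat t}$ lives in) this is false: $\hat t_c(\chi_2)=+\infty$, and $\|T_{\chi_2,\hat t}\|<1$ for every finite $\hat t$. There is therefore no finite base point at which to apply the implicit function theorem; the limit is \emph{singular}, with $\hat t_c(\underline{p})\to\infty$ as $\underline{p}\Rightarrow\chi_2$. The paper's content is precisely to identify the rate: $\hat t_c(\underline{p})\cdot\sum_{k\ge 3}k(k-2)p_k\to 1$. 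The sandwiching is done not by ``comparison processes'' but by explicit test functions $\wunder,\wover$ (first-order Taylor expansions of the principal eigenfunction, $\lambda(t)+(1\pm\delta/2)\varepsilon\,\werror(t)$, frozen beyond the diverging times $\tunder,\tover\asymp 1/\varepsilon$) for which one verifies $\Toperator{\tunder}\wunder\le\wunder$ and $\Toperator{\tover}\wover\ge\wover$ by hand; the hard work is controlling all the ingredients ($\lamp$, $\Hp$, $\werror$, \dots) uniformly on the growing interval $[0,\toverone]$ via explicit ODE/integral formulas and $t\to\infty$ asymptotics.

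Your worry about a ``linear term $\sum_{k\ge 3}(k-2)p_k$ entering at lower order'' is also off: every sum $\sum_{k\ge 3}c_k p_k$ is of order $\varepsilon$, so there is no order hierarchy to sort out. The passage from the continuous-time constant $\sum k(k-2)p_k$ to the discrete-time constant $\sum(k-1)(k-2)p_k$ is not a cancellation of a lower-order term but a genuine computation with the time-change $t_c(\underline{p})=\tfrac12 F_{\underline{p}}(\hat t_c(\underline{p}))$: one uses $F_{\underline{p}}(\infty)=\ev(D)$ together with the tail asymptotic $\int_{\hat t}^\infty(\lamp')^2\,\mathrm{d}s\sim 1/\hat t$ (uniformly in $\underline{p}$) to get $1-t_c=\tfrac12\big(\ev(D)-\ev[D(D-2)]\big)+o(\varepsilon)=\tfrac12\ev[(D-1)(D-2)]+o(\varepsilon)$. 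So the ``Molloy--Reed coincidence'' is an output of this calculation, not an input or a cancellation principle.
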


Intuitively, the theorem states that if~$\underline{p}$ is close to the purely $2$-regular distribution~$\chi_2$, then we have
\begin{equation}\label{eq:intuitive_disc}
t_c(\underline{p}) \stackrel{\eqref{eq:tc_asymp_disc}}{\approx}
1- \frac 12 \sum \limits_{k=3}^\Delta (k^2-3k+2)p_k=
1- \frac 12 \sum \limits_{k=1}^\Delta (k^2-3k+2)p_k.
\end{equation}

This formula shows that the critical time~$t_c(\underline{p})$ is slightly less than $t_c(\chi_2) = 1$, reflecting how vertices with degree constraints greater than 2 accelerate the emergence of the giant component compared to the purely $2$-regular case (i.e., slightly fewer edges are required for the giant to appear). In particular, if $p_2=1-\varepsilon$ for sufficiently small $\varepsilon>0$, then the reduction in the critical time~$t_c(\underline{p})$ has the same order of magnitude as $\varepsilon$.

We will deduce Theorem~\ref{thm:tc_asymp_behavior_disc} in Section~\ref{subsec:discrete_from_cont} from the critical time asymptotics of the almost $2$-regular, continuous-time RDCP that we will introduce in Section~\ref{subsec:cont_time_intro}.

\begin{remark}[Heuristic argument for the almost $2$-regular critical time asymptotics using Molloy--Reed]\label{rem:molloy_reed}
Let us provide an admittedly naive argument for \eqref{eq:intuitive_disc} by replacing the RDCP dynamics with the dynamics of the \emph{configuration model} (see e.g.~\cite[Section~1.3.3]{H24b}) with the same degree distribution~$\underline{p}$, and study the emergence of the giant in this simpler model.
For each vertex~${i \in [n]}$, we assign $d_i$ half-edges, where $d_1,\dots,d_n$ are i.i.d.\ random variables with distribution~$\underline{p}$. We then generate a uniform matching of these half-edges to form edges and call the resulting (multi)graph the configuration model.
One way of sequentially generating this graph is to add edges one by one, uniformly choosing a pair from the set of remaining half-edges and connecting them with an edge. Let $H^{n,\ell}_{\underline{p}}$ denote the graph that we obtain by adding the first $\ell$ edges. Let us again denote the total number of edges by~$M_n(\underline{p})$ and note that $M_n(\underline{p})/n \approx \frac{1}{2}\mathbb{E}(D)$, where~${D \sim \underline{p}}$.
It is known that $H^{n,\ell}_{\underline{p}}$ has the same distribution as the subgraph of the final configuration model $H^{n,M_n(\underline{p})}_{\underline{p}}$ that we obtain by picking a uniformly distributed $\ell$-element subset of the edges. Thus, heuristically, $H^{n,\lfloor n\cdot t\rfloor}_{\underline{p}}$ has approximately the same distribution as the subgraph of $H^{n,M_n(\underline{p})}_{\underline{p}}$ that we obtain by keeping each edge independently with probability~$Q$, where
\begin{equation}\label{eq:heur_prob}
Q := \frac{\lfloor n \cdot t \rfloor}{M_n(\underline{p})} \approx \frac{2t}{\ev(D)}.
\end{equation} 

Now we study the critical time $t^*_c(\underline{p})$ of this percolated configuration model. Using the \emph{Molloy--Reed criterion}~\cite{MR}, we can derive that $H^{n,\lfloor n\cdot t\rfloor}_{\underline{p}}$ has a giant component for large $n$ if and only if
\begin{equation}\label{eq:molloy-reed}
\frac{\ev[D(D-1)]}{\ev(D)} \cdot Q > 1,
\quad \text{i.e., by the choice of $Q$ (see~\eqref{eq:heur_prob}),} \quad
t>\frac{(\ev(D))^2}{2\, \ev[D(D-1)]}.
\end{equation}

If~$\underline{p}$ is close to~$\chi_2$, then we can write $D = 2+\xi$, where $\xi$ is a `small' non-negative integer-valued random variable, i.e., $\mathbb{P}(\xi>0) = 1-p_2 =\varepsilon$. 
 Therefore, we obtain
\begin{equation}\label{eq:tc_heur_appr}
t^*_c(\underline{p}) \stackrel{\eqref{eq:molloy-reed}}{\approx} \frac{(\ev(D))^2}{2 \, \ev[D(D-1)]}
= \frac{1+ \ev(\xi)+\frac{1}{4}(\ev(\xi))^2}{1+\frac{3}{2}\ev(\xi)+\frac{1}{2}\ev(\xi^2)} \approx \frac{1+\ev(\xi)}{1+\frac{3}{2}\,\ev(\xi)+\frac{1}{2}\ev(\xi^2)}
\approx 1-\frac 12 \ev(\xi)- \frac 12 \ev(\xi^2).
\end{equation}

Observe that the approximation in~\eqref{eq:tc_heur_appr} matches the result of Theorem~\ref{thm:tc_asymp_behavior_disc}, since
\begin{equation*}
t_c(\underline{p}) \stackrel{\eqref{eq:intuitive_disc}}{\approx} 1- \frac 12 \ev[D^2-3D+2] = 1-\frac 12 \ev [\xi + \xi^2] = 1-\frac 12 \ev(\xi) - \frac 12 \ev(\xi^2). 
\end{equation*}
\end{remark}

\begin{remark}[The flaws of the above heuristic argument]\label{rem:flaws}
Our rigorous proof of Theorem~\ref{thm:tc_asymp_behavior_disc} will not look like the heuristic argument sketched in Remark~\ref{rem:molloy_reed}.
Actually, $t^*_c(\underline{p})$ has no reason to be equal to $t_c(\underline{p})$, because they are the critical times of two genuinely different random graph models:
\begin{itemize}
\item In~\cite{MSW} it is shown that the final graph of the RDCP is not contiguous to the configuration model with the same degree sequence.
\item Let us also note that if we fix the final graph~$H^{n,M_n(\underline{p})}_{\underline{p}}$ (i.e., the final configuration model), then the edge arrival times become asymptotically independent as~${n \to \infty}$: we used this in~\eqref{eq:molloy-reed}.
In contrast, even in the simplest ${\underline{p}=\chi_2}$ case of the RDCP (i.e., the random $2$-process), the deposition times of edges of~$G^{n,M_n(\underline{p})}_{\underline{p}}$ that share an end-vertex are correlated: it follows from the $\underline{p}=\chi_2$ case of \cite[Theorem~1.41]{RSW} that the edge arrival times of the local weak limit object form a hidden Markov chain. 
\item Also, if we denote by~$\chi_3$ the degree distribution concentrated on degree~$3$ (i.e., if we compare the random $3$-regular graph and the random $3$-process), then $t^*_c(\chi_3)=\frac{3}{4}$ by~\eqref{eq:molloy-reed}, while numerically one obtains ${t_c(\chi_3) \approx 0.577}$ using the analytic method of~\cite{WW} as well as the analytic method of~\cite{RSW}. 
\end{itemize}
\end{remark}

\begin{remark}[Our result hints at almost $2$-regular universality]
Despite the reasons listed in Remark~\ref{rem:flaws}, Theorem~\ref{thm:tc_asymp_behavior_disc} does hold, which suggests that the macroscopic structures of the late phase of the almost \mbox{$2$-regular} RDCP and Bernoulli bond percolation on the almost $2$-regular configuration model with parameter~${Q \approx 1}$ look similar.
Note that the results of~\cite{DKLP11} give a precise description of the structure of an almost $2$-regular random graph, namely the $2$-core of the largest connected component of a barely supercritical Erd\H{o}s--R\'enyi graph: loosely speaking, it is a configuration model in which we replace edges with long paths of i.i.d.\ geometrically distributed length.
A variant of this description works throughout the supercritical phase of the Erd\H{o}s--R\'enyi graph, cf.~\cite{DLP14}.
The final graph of the almost $2$-regular RDCP and the almost $2$-regular configuration model might have a similar description. 
\end{remark}

\subsection{Continuous-time RDCP}\label{subsec:cont_time_intro}

The RDCP is constructed by attempting to add the edges of $K_n$ one-by-one and an edge is added only if it does not violate the degree constraints. There are different ways to parametrize time: loosely speaking, Definition~\ref{def:rdcp_disc}
 measures time by the number of successfully deposited edges,
 while e.g.\ in~\cite{BK} time is measured by the number of edge deposition attempts.
In our proof we will use a time parametrization similar to that of~\cite{BK}. More specifically, we use a natural continuous-time version of the RDCP, defined in~\cite[Definition~1.4]{RSW}. 

\begin{definition}[Continuous-time RDCP on $K_n$, {\cite[Definition~1.4]{RSW}}]
The random degree constrained process $\big(G^n_{\underline{p}}(\hat{t})\big)_{\hat{t} \geq 0}$ is a time-evolving random graph process on the complete graph~$K_n$, where~$\hat{t}$ is a \textit{continuous time} parameter and $G^n_{\underline{p}}(0)$ is the empty graph with vertex set~$[n]$. For each edge $e \in E(K_n)$, we independently let $X_e \sim \text{EXP}\left(1/n\right)$ be the \textit{activation time} of~$e$. As time evolves, we attempt to add edges to the graph one-by-one, in increasing order of their activation times. An edge~${e=vw}$ is added at its activation time $X_e$ if and only if the degrees of vertices $v$ and~$w$ in the RDCP graph~$G^n_{\underline{p}}(X_e^-)$ right before time~$X_e$ are strictly less than their degree constraints~$d(v)$ and~$d(w)$, respectively. If no more edges can be added to the graph without violating any degree constraint, we arrive at the \textit{final graph} of the RDCP on the complete graph~$K_n$, which we denote by~$G^n_{\underline{p}}(\infty)$.
\end{definition}

Recall the definition of the critical time $t_c(\underline{p})$ of $\big(G^{n,\ell}_{\underline{p}}\big)_{\ell \ge 0}$ from Section~\ref{subsec:intro_tc}.
The following definition is actually a corollary of~\cite[Theorem~1.6]{WW} and \cite[Proposition~1.20]{RSW} due to the time-change result stated in~\cite[Corollary~1.18]{RSW}. 

\begin{definition}[Critical time of the continuous-time RDCP on $K_n$]\label{def:t_crit_cont}
We define the critical time~${\hat{t}_c = \hat{t}_c(\underline{p})}$ of $\big(G^n_{\underline{p}}(\hat{t})\big)_{\hat{t} \geq 0}$ as follows:
if $\hat{t}<\hat{t}_c(\underline{p})$ is fixed and~${n \gg 1}$, then the size of the largest connected component of~$G^n_{\underline{p}}(\hat{t})$ is $\mathcal{O}(\ln(n))$ with high probability, but if $\hat{t}>\hat{t}_c(\underline{p})$ is fixed and $n \gg 1$, then $G^n_{\underline{p}}(\hat{t})$ has a giant connected component with high probability. 
\end{definition}

\begin{remark}[The critical time of the continuous-time random $2$-process]\label{rem:2reg_hattc_infty}
Note that $\hat{t}_c(\chi_2) = \infty$, because if $0\leq \hat{t} <+\infty$, then (loosely speaking) 
$G^n_{\chi_2}(\hat{t})$ corresponds to $G^{n,\lfloor n \cdot s \rfloor}_{\chi_2}$ for some $0 \leq s < \frac{1}{2}\mathbb{E}(D)= 1$ under the time-change stated in \cite[Corollary~1.18]{RSW}, and we have already discussed that $G^{n,\lfloor n \cdot s \rfloor}_{\chi_2}$ is subcritical for any $0 \leq s <1$ in Remark \ref{rem:chi_2_discr}.
\end{remark}

Thus the question that we address is: how fast does $\hat{t}_c(\underline{p})$ converge to $\infty$ if $\underline{p}$ weakly converges to $\chi_2$?

\begin{theorem}[Asymptotics of the critical time of the almost $2$-regular continuous-time RDCP]\label{thm:tc_asymp_behavior_cont}
Let $(\underline{p}^m)_{m=1}^\infty$ be a sequence in $\nonreg$ such that $\underline{p}^m \Rightarrow \chi_2$ as $m \to \infty$. Then we have
\begin{equation}\label{eq:tc_asymp_cont}
\lim \limits_{m \to \infty} \hat{t}_c(\underline{p}^m) \cdot \sum \limits_{k=3}^\Delta k \cdot (k-2) \cdot p^m_k =1.
\end{equation}
\end{theorem}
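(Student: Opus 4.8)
The plan is to work entirely with the spectral characterization of the critical time from \cite{RSW}: $\hat{t}_c(\underline{p})$ is the unique time $\hat{t}$ at which the principal eigenvalue of the branching operator $\Toperator{\hat{t}}$ of the multi-type branching process (the local weak limit of the RDCP at continuous time $\hat{t}$) equals $1$. Write $\sigma_m := \sum_{k=3}^\Delta k(k-2)p^m_k$, so $\sigma_m \to 0$ as $m \to \infty$, and the claim is $\hat{t}_c(\underline{p}^m) \sim 1/\sigma_m$. The heuristic behind the scaling is that in the almost $2$-regular regime the local limit object looks like a long path with rare ``hubs'' (vertices of degree $\ge 3$) appearing with density $O(\varepsilon)$; a vertex of degree constraint $k$ contributes $k(k-2)$ to a Molloy--Reed-type branching count (this is $\ev[D(D-1)] - \ev[D]$ specialized to the perturbative part), while the probability that a given edge of the final graph has been deposited by time $\hat{t}$ in the continuous parametrization tends to $1$ only in the limit but, for the surviving long-path structure, the effective per-hub reproduction is governed by the product of (density of hubs) $\times$ (typical path length a hub can reach) $\times$ (branching factor at the hub), and this product equals $1$ precisely when $\hat{t} \sim 1/\sigma_m$.

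\textbf{Key steps.} First I would set up, for fixed $\underline{p} \in \nonreg$, the explicit form of the branching operator $\Toperator{\hat{t}}$ acting on functions on the type space $\mathbb{R}_+$, as recalled from \cite{RSW}: the kernel decomposes into a ``path-continuation'' part (corresponding to following a degree-$2$ vertex, which contributes a one-dimensional transfer along the type line) and a ``branching'' part (corresponding to hitting a degree-$k$ vertex with $k \ge 3$, which creates $k-1$ further offspring). Second, I would extract the principal eigenvalue equation $\lambda_{\underline{p}}(\hat{t}) = 1$ and perform a perturbative expansion as $\underline{p}^m \Rightarrow \chi_2$: when $\underline{p} = \chi_2$ the branching part vanishes and the operator is sub-stochastic with principal eigenvalue $< 1$ for all finite $\hat{t}$ (consistent with $\hat{t}_c(\chi_2) = \infty$ from Remark~\ref{rem:2reg_hattc_infty}), so the eigenvalue crossing $1$ is driven entirely by the $O(\sigma_m)$ branching term, and balancing it against the path-continuation ``propagator'' at time $\hat{t}$ forces $\hat{t}$ to diverge like $1/\sigma_m$. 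Concretely, I expect the propagator to contribute a factor that behaves like $\hat{t}$ (up to lower-order corrections and the overall degree-$2$ normalization, which in the continuous-time parametrization is calibrated so that this factor is asymptotically $\hat{t}$ itself as $\hat{t} \to \infty$), and the branching insertion to contribute the weighted hub count; setting (path factor) $\times$ (hub count) $= 1$ gives $\hat{t}_c(\underline{p}^m) \cdot \sigma_m \to 1$. Third, I would make this rigorous by two-sided comparison: construct explicit super- and sub-solutions to the eigenvalue equation (equivalently, test functions for the operator) that sandwich $\lambda_{\underline{p}^m}(\hat{t})$ between $1 - o(1)$ and $1 + o(1)$ at $\hat{t} = (1\pm\eta)/\sigma_m$ for every fixed $\eta > 0$ and all large $m$, then invoke monotonicity of $\hat{t} \mapsto \lambda_{\underline{p}}(\hat{t})$ (which follows from the monotone structure of the branching operator in $\hat{t}$, as established in \cite{RSW}) to conclude $(1-\eta)/\sigma_m \le \hat{t}_c(\underline{p}^m) \le (1+\eta)/\sigma_m$ eventually, and let $\eta \to 0$.

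\textbf{Main obstacle.} The hardest part is controlling the path-continuation propagator uniformly as $\hat{t} \to \infty$ \emph{and} $\underline{p}^m \to \chi_2$ simultaneously: these two limits interact, since the density of hubs shrinks like $\sigma_m$ exactly as the time over which a path can grow expands like $1/\sigma_m$, so one must show the leading-order contribution is precisely the product $\hat{t}_c \cdot \sigma_m$ with no stray multiplicative constant and all cross-terms (multiple hubs along one path, saturation effects in the type-space dynamics, corrections from degree-$k$ hubs with $k > 3$ versus $k = 3$) genuinely lower order. I would handle this by first solving the degree-$2$-only dynamics explicitly (this is the $\chi_2$ branching operator, whose iterates/resolvent should be computable in closed form, or at least whose large-$\hat{t}$ asymptotics are transparent), then treating the hub insertions as a first-order perturbation and carefully bounding the second-order remainder using the exponential decay built into the type-space kernel. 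A secondary technical point is ensuring the perturbative expansion is uniform over the whole space $\nonreg$ rather than along one sequence — but since only the single functional $\sigma_m = \sum_{k=3}^\Delta k(k-2)p^m_k$ enters at leading order (by the structure of the branching term), the dependence on the finer shape of $(p^m_k)_{k \ge 3}$ should drop out, and the argument reduces to the one-parameter balance in $\sigma_m$.
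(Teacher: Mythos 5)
Your strategy coincides with the paper's: both rest on the spectral characterization of $\tc$ from \cite{RSW}, both aim to show $\tunder\le\tc\le\tover$ with $\tunder,\tover=(1\mp\delta)/\sigma_m$ by exhibiting non-negative test functions $w$ with $\Toperator{\tunder}w\le w$ and $\Toperator{\tover}w\ge w$ (Perron--Frobenius for positive Hilbert--Schmidt operators), and both then invoke the strict monotonicity and continuity of $\hat t\mapsto\norm{\Toperator{\hat t}}$ to conclude. The leading-order balance you identify, $\hat t_c\cdot\sigma_m\to1$ coming from (time)$\times$(weighted hub density), is exactly the balance that appears in the paper via $\lim_{t\to\infty}\frac1t\bigl(\werror(t)-\mathrm{e}^{\lambda(t)}\werror'(t)\bigr)=\Upsilon_{\underline r}$.

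That said, essentially all of the content of the paper lies in the part you defer to ``I would handle this by\dots'', and your proposed route there does not match the actual structure of the operator. $\Toperator{\hat t}$ is not a sum of a path-transfer part and a branching insertion amenable to a resolvent expansion; it is the single integral operator $(\Toperator{\hat t}\varphi)(t)=\int_0^\infty \Hp(s)\varphi(s)(\hat t\wedge t\wedge s)\,\mathrm ds$, and the paper's construction goes through the observation that its principal eigenfunction solves the second-order ODE $w''=-\Hp w$, $w(0)=0$, $w'(0)=1$ on $[0,\tc)$. The test functions are then the first-order Taylor expansion in $\varepsilon=1-p_2$ of the solution of this ODE ($\lambda(t)+(1\pm\delta/2)\varepsilon\,\werror(t)$, with $\werror$ computed explicitly by variation of parameters), and the operator inequalities are verified not by bounding a perturbation series but by a stochastic-domination argument: one squeezes the survival function of the type variable $\tau_{\underline p}$ between explicit first-order approximants, which hinges on the non-obvious positivity of the correction term $\Herrorint(t)\ge0$. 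The uniformity issue you flag as the main obstacle is real and is resolved in the paper by compactness of $\Rspace$ plus explicit asymptotics ($\werror(t)\sim-\Upsilon_{\underline r}\,t/\ln t$ uniformly in $\underline r$); your remark that only the functional $\sigma_m$ enters at leading order is correct but requires exactly these uniform estimates to justify. So: right skeleton, but the proposal as written contains no construction that could be checked, and the mechanism you sketch for producing one would need to be replaced by (or reworked into) the ODE-based perturbation theory.
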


Intuitively,~\eqref{eq:tc_asymp_cont} states that
\begin{equation*}
\tc \approx \frac{1}{\sum \limits_{k=3}^\Delta k(k-2)p_k} = \frac{1}{\sum \limits_{k=2}^\Delta k(k-2)p_k} = \frac{1}{\ev[D(D-2)]},
\end{equation*}
where~$D$ is a random variable with distribution~$\underline{p}$, and $\underline{p} \in \nonreg$ is almost $2$-regular. Loosely speaking, if $1-p_2=\varepsilon$ and $\varepsilon \ll 1$, then the order of magnitude of $\tc$ is ${1}/{\varepsilon}$.

\subsection{Further related literature}\label{subsec:rel_lit}

Apart from the papers mentioned earlier, let us mention some more.

\smallskip

\textbf{Random $\bm{d}$-process.}
The random $d$-process has been studied for decades, even by chemists and physicists~\cite{BQ87, KQ}. 
Over the years many properties of the random \mbox{$d$-process} have been studied, e.g.\ the fluid limit for the time evolution of the degree distribution \cite{W95,W99a}, the connectedness~\cite{RW02}, the distribution of short cycles~\cite{RW97}, Hamiltonicity~\cite{TWZ}, the time evolution of the minimal degree~\cite{H24a} and the exact asymptotics of the probability of non-saturation~\cite{RW23}. 
In the paper~\cite{Se13} a central limit theorem is proved for the size of the giant component in the random $d$-process, if it exists: the author does not prove that these graph processes actually have a giant component; he merely shows that if there is a giant component, then the number of vertices it contains is asymptotically normally distributed.

\smallskip

\textbf{Barely supercritical random graphs.} 
The barely supercritical regime of the configuration model (e.g.\ uniqueness of the giant component) is studied in~\cite{HJL19}. The paper~\cite{RW10} provides precise results on the diameter of the Erd\H{o}s--R\'enyi graph in various regimes, including the barely supercritical one.

\smallskip 

\textbf{Perron--Frobenius theory and the phase transition of random graphs.}
The papers~\cite{BJR07} and~\cite{GM15} use multi-type branching processes as a local proxy of inhomogeneous random graphs and consider the principal eigenvalue~$\mu$ of the associated branching operator, just like us: loosely speaking, the graph has a giant connected component if and only if~${\mu>1}$.

\smallskip

\textbf{Dynamic random graphs and Flory-type coagulation equations.}
We have already mentioned that Flory's coagulation equations describe the fluid limit of the component-size distribution of the dynamical Erd\H{o}s--R\'enyi graph process. Similarly, Smoluchowski's coagulation equations \cite{S16} describe self-organized critical (SOC) variants of the Erd\H{o}s--R\'enyi graph process in which incipient giants are immediately removed \cite{MN14, R09}. The $p$-frozen Erd\H{o}s--R\'enyi model \cite{HV25} is similarly related to a modification of Smoluchowski's coagulation equations \cite{K24}.
Smoluchowski’s equation with limited aggregations \cite{B09} also has a corresponding dynamic random graph model \cite{MN15}, which is a SOC modification of the dynamic configuration model. The mean field forest fire model (FFM)~\cite{RT09} is also a SOC modification of the Erd\H{o}s--R\'enyi graph process, which also has its own Smoluchowski-type differential equations, but the structure of the FFM turns out to be closely related to the inhomogeneous random graph model of~\cite{BJR07}. Thus, an alternative approach to the FFM which involves multi-type branching processes and Perron--Frobenius theory is developed in~\cite{CRY21}.

\subsection{Structure of the rest of this paper}
\label{subsec:intro_structure_of_rest}

In Section~\ref{subsec:discrete_from_cont} we derive Theorem~\ref{thm:tc_asymp_behavior_disc} (which uses the discrete time parametrization) from Theorem~\ref{thm:tc_asymp_behavior_cont} (which uses the continuous time parametrization).
We prove Theorem~\ref{thm:tc_asymp_behavior_cont} in Sections~\ref{subsec:cont_time_proof} and~\ref{subsec:eigenfunction_bounds}. Similarly to the proof of \cite[Theorem~1.22]{RSW}, our result relies on a spectral characterization of the critical time $\tc$ combined with classical results of Perron--Frobenius theory. In Section~\ref{subsec:cont_time_proof} we transform the problem of giving upper and lower bounds on the critical time $\tc$ into the problem of constructing appropriate test functions that certify that the principal eigenvalue of the branching operator shortly before $\tc$ is less than one, but shortly after $\tc$ it is greater than one.
In Section~\ref{subsec:eigenfunction_bounds} we construct the above-mentioned test functions via the perturbative analysis of a second-order linear ODE that characterizes the principal eigenfunction.

\section{Asymptotic behavior of the critical time}

\subsection{Connection between discrete- and continuous time asymptotics}\label{subsec:discrete_from_cont}

In this section we derive Theorem~\ref{thm:tc_asymp_behavior_disc} (which uses the discrete time parametrization) from Theorem~\ref{thm:tc_asymp_behavior_cont} (which uses the continuous time parametrization). For that purpose, we need to introduce some notation that facilitates this time-change, following~\cite{RSW}. The function~$\lamp$ (cf.\ Notation~\ref{not:lambda}) will play an important role in the rest of the paper. 

\begin{notation}[Tail probability]
Let $\underline{p} \in \Pspace$. We define $q_k$ as in \cite[Notation~1.34]{RSW}, i.e.,
\begin{equation}\label{eq:qk_def}
q_k := \sum \limits_{d=k}^\Delta p_d
\quad \text{for} \quad
k=1,\, 2,\, \dots, \, \Delta
\qquad \text{and} \qquad
q_k :=0
\quad \text{for} \quad
k > \Delta.
\end{equation}
Observe that for any $\underline{p} \in \Pspace$, we have $q_1 = q_2 = 1$.
\end{notation}

Now we recall the definition of the function~$\lamp$ from \cite{RSW}, the probabilistic meaning of which is explained in 
\cite[Remarks~1.30 and~1.35]{RSW}.

\begin{notation}[Function~$\lamp$]\label{not:lambda}
Let $\underline{p} \in \nonreg$. We introduce the notation $\lamp(t)$ following \cite[Notation~1.34]{RSW}, but in this paper we emphasize its dependence on the distribution~$\underline{p}$ with the subscript in order to distinguish it from the $2$-regular case. Namely,
$\lamp(t)$ is the solution of the initial value problem
\begin{equation}\label{eq:lambda_p_ode_with_q}
\lambda'_{\underline{p}}(t) = \mathrm{e}^{-\lamp(t)} \cdot \sum \limits_{k=0}^{\Delta-1} \frac{\lamp^k(t)}{k!} q_{k+1}, \qquad \lamp(0) = 0.
\end{equation}

In the $2$-regular case, we omit the subscript, i.e., $\lambda(t) := \lambda_{\chi_2}(t)$ is the solution of the initial value problem
\begin{equation}\label{eq:lambda_2reg_ode}
\lambda'(t) = \mathrm{e}^{-\lambda(t)} \cdot (1+\lambda(t)), \qquad \lambda(0) = 0.
\end{equation}
\end{notation}

Recall from \cite[Section 1.3]{RSW} that~$G^\infty_{\underline{p}}(\hat{t})$ denotes the local limit of the continuous-time RDCP at time $\hat{t}$.

\begin{definition}[Mean number of neighbors of the root, {\cite[Definition~1.16]{RSW}}]\label{def:F_p}
Given $\hat{t} \in \mathbb{R}_+ \cup \{ \infty\}$, let $F_{\underline{p}}(\hat{t})$ denote the expected number of neighbors of the root in the local limit $G^\infty_{\underline{p}}(\hat{t})$.
\end{definition}

The next proposition summarizes the results of \cite[Proposition~1.20, Lemma~4.10, Claim~4.12]{RSW}.

\begin{proposition}[Properties of $F_{\underline{p}}$]
Consider the function $F_{\underline{p}}$ defined in Definition~\ref{def:F_p} and let $D$ be a random variable with distribution~$\underline{p}$. We have
\begin{align}
\lim \limits_{\hat{t} \to \infty} F_{\underline{p}}(\hat{t})&=\ev (D)=\sum_{k=2}^{\Delta} k\cdot p_k, \label{eq:Fp_limit}\\
F_{\underline{p}}(\hat{t})&= \int \limits_0^{\hat{t}} \left( \lambda'_{\underline{p}}(s) \right)^2 \, \mathrm{d}s
\quad \text{for any} \quad \hat{t} \in \mathbb{R}_+, \label{eq:Fp_integral_formula}
\end{align}
where $\lamp(t)$ is the solution of the initial value problem~\eqref{eq:lambda_p_ode_with_q}.
Moreover, we have
\begin{equation}\label{eq:tc_connection}
t_c(\underline{p}) = \frac 12 F_{\underline{p}}\left(\hat{t}_c(\underline{p})\right),
\end{equation}
where~$t_c(\underline{p})$ is defined in Section~\ref{subsec:intro_tc} and~$\tc$ is defined in Definition~\ref{def:t_crit_cont}.
\end{proposition}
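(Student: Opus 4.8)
The plan is to reconcile the present notation with that of \cite{RSW} and invoke \cite[Proposition~1.20, Lemma~4.10, Claim~4.12]{RSW}; nevertheless, I record below which facts one would check and where the genuine work sits.

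For \eqref{eq:Fp_integral_formula} I would compute directly inside the local weak limit $G^\infty_{\underline{p}}(\hat t)$. By \cite{RSW} this is the family tree of a multi-type branching process, and its essential feature is that the two endpoints of any candidate edge decide independently whether to accept it, each using only its own degree constraint and its own arrival history. Fix such an endpoint and condition on its constraint being $d$: its incident edges are activated by a rate-one Poisson process, and it accepts an arrival iff it is not yet saturated. The key observation is that the number of edges it accepts up to time $s$ --- under the auxiliary rule that it ignores the far endpoints' constraints --- is a time-changed Poisson process with integrated intensity $\lamp(s)$, and whether this count is $<d$ is unchanged by truncating it at $d$; hence the vertex is unsaturated at time $s$ with probability $\pr\bigl(\mathrm{Poisson}(\lamp(s))<d\bigr)$. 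Averaging over $d\sim\underline{p}$ and using \eqref{eq:qk_def}, the self-consistency equation for this probability becomes exactly the ODE \eqref{eq:lambda_p_ode_with_q}, so the marginal acceptance probability at time $s$ is $\lamp'(s)$. Since both endpoints of a candidate edge are, at the instant of the attempt, independent vertices with constraint $\sim\underline{p}$ that have been running the dynamics since time $0$, the rate at which the root gains a neighbour at time $s$ is $(\lamp'(s))^2$; integrating in $s$ gives \eqref{eq:Fp_integral_formula}.

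Given \eqref{eq:Fp_integral_formula}, I would deduce \eqref{eq:Fp_limit} by the substitution $u=\lamp(s)$: the bound $\lamp'(s)\ge \mathrm e^{-\lamp(s)}q_1>0$ precludes $\lamp$ from having a finite limit, so $\lamp(t)\uparrow\infty$, and therefore $\int_0^\infty (\lamp'(s))^2\,\mathrm ds=\int_0^\infty \mathrm e^{-u}\sum_{k=0}^{\Delta-1}\frac{u^k}{k!}q_{k+1}\,\mathrm du=\sum_{k\ge 1}q_k=\ev(D)$. (Alternatively, all vertices saturate in the final RDCP graph, so the root of $G^\infty_{\underline{p}}(\infty)$ has degree $D$; passing $\hat t\to\infty$ through the expectation is harmless since degrees are bounded by $\Delta$, whence $F_{\underline{p}}(\infty)=\ev(D)$.) For \eqref{eq:tc_connection} I would invoke the deterministic time change between the two parametrisations: because $F_{\underline{p}}(\hat t)$ is the expected degree of the root, $G^n_{\underline{p}}(\hat t)$ has about $\tfrac n2 F_{\underline{p}}(\hat t)$ edges with high probability, hence is coupled with $G^{n,\lfloor ns\rfloor}_{\underline{p}}$ at $s=\tfrac12 F_{\underline{p}}(\hat t)$; comparing the defining thresholds of $t_c(\underline{p})$ and $\tc$ and using that $F_{\underline{p}}$ is strictly increasing yields $t_c(\underline{p})=\tfrac12 F_{\underline{p}}(\tc)$.

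The main obstacle --- and the reason this is stated as a corollary of \cite{RSW} rather than proved from scratch --- is the local-limit input itself: rigorously identifying $G^\infty_{\underline{p}}(\hat t)$ with the claimed multi-type branching tree, justifying the endpoint-independence used above, and establishing the concentration and coupling that underlie the time change in \eqref{eq:tc_connection}. Once these are taken as given, the three identities reduce to the short computations sketched above.
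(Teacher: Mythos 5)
Your top-level plan is exactly what the paper does: the paper gives no proof of this proposition, but instead states it as a summary of \cite[Proposition~1.20, Lemma~4.10, Claim~4.12]{RSW}, so ``reconcile notation and invoke those results'' is precisely the intended argument, and your reading of the three ingredients and what they accomplish is accurate.

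One slip in your heuristic sketch of \eqref{eq:Fp_integral_formula} is worth flagging, even though it does not affect the proposed proof (which rests on the citation, not the sketch). You describe the auxiliary counting process as the one obtained ``under the auxiliary rule that it ignores the far endpoints' constraints'' and claim its integrated intensity is $\lamp(s)$. But if the vertex ignored the far endpoints entirely, candidate edges would be accepted at rate~$1$ whenever the vertex itself is unsaturated, giving a rate-$1$ Poisson truncated at~$d$ and an unsaturation probability of $\pr(\mathrm{POI}(s)<d)$, which is not $\lamp'(s)$. What you actually want is the opposite: keep the far-endpoint thinning (arrivals with unsaturated far endpoint come at rate $\lamp'(s)$, by the fixed-point self-consistency you correctly invoke a sentence later) and \emph{remove the vertex's own truncation at $d$}. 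That untruncated process is the time-changed Poisson with cumulative intensity $\lamp(s)$, and then your observation that $\{\text{count}<d\}$ is unchanged by truncation applies. With this correction the sketch is exactly the heuristic behind \cite[Lemma~4.8, Claim~4.4]{RSW}. Your two arguments for \eqref{eq:Fp_limit} and the handshaking/time-change argument for \eqref{eq:tc_connection}, including the appeal to strict monotonicity of $F_{\underline{p}}$ and the concentration/coupling input established in \cite{RSW}, are sound.
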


In order to state and prove some of our asymptotic results, we will use the following notation.

\begin{notation}[Asymptotic relations]\label{not:asymp_relations}
We define the notation $h(t) \sim g(t)$ and $h(t) \ll g(t)$ for functions $h(t)\colon \mathbb{R}_+ \to \mathbb{R}$, $g(t)\colon \mathbb{R}_+ \to \mathbb{R}$ in the following way:
\begin{alignat*}{2}
&h(t) \sim g(t) &&\quad \text{if and only if} \quad \lim \limits_{t \to \infty} \frac{h(t)}{g(t)} = 1,\\
&h(t) \ll g(t) &&\quad \text{if and only if} \quad \lim \limits_{t \to \infty} \frac{h(t)}{g(t)} = 0.
\end{alignat*}
\end{notation}

We will use the following claim to derive Theorem~\ref{thm:tc_asymp_behavior_disc} from Theorem~\ref{thm:tc_asymp_behavior_cont}.

\begin{claim}[Asymptotics of an integral]
The following asymptotic relation holds uniformly in~$\underline{p} \in \Pspace$:
\begin{equation}\label{eq:int_lam_p_diff^2_asymp}
\int \limits_t^\infty \left(\lamp'(s)\right)^2 \, \mathrm{d}s \sim \frac 1t.
\end{equation}
\end{claim}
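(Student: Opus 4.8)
The plan is to first establish the pointwise asymptotics of $\lamp(t)$ as $t \to \infty$, and then differentiate/integrate to get the claim, all while keeping track of the dependence on $\underline{p}$ to obtain uniformity. From \eqref{eq:lambda_p_ode_with_q}, since $q_1 = q_2 = 1$ for every $\underline{p} \in \Pspace$, the right-hand side of the ODE is $\mathrm{e}^{-\lambda}(1 + \lambda + R_{\underline{p}}(\lambda))$ where $R_{\underline{p}}(\lambda) = \sum_{k=2}^{\Delta-1} \frac{\lambda^k}{k!} q_{k+1}$ collects the higher-order terms. For large $\lambda$ the dominant behavior is $\lambda'_{\underline{p}}(t) \asymp \lambda^{\Delta-1} \mathrm{e}^{-\lambda}$ at worst and $\asymp \lambda \mathrm{e}^{-\lambda}$ at best, but in all cases $\lambda'_{\underline{p}}(t) \to 0$ and $\lamp(t) \to \infty$; the key quantitative fact I would extract is that $\lamp(t) \sim \ln t$ as $t \to \infty$, uniformly in $\underline{p}$. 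To see this, note $\mathrm{e}^{\lambda} \lambda' / P_{\underline{p}}(\lambda) = 1$ where $P_{\underline{p}}(\lambda) = \sum_{k=0}^{\Delta-1} \frac{\lambda^k}{k!} q_{k+1}$ is a polynomial of degree at most $\Delta - 1$ with $1 \le P_{\underline{p}}(\lambda) \le C_\Delta(1+\lambda)^{\Delta-1}$ for explicit $C_\Delta$ depending only on $\Delta$; integrating, $\int_0^{\lamp(t)} \mathrm{e}^{u}/P_{\underline{p}}(u)\,\mathrm{d}u = t$, and since $\mathrm{e}^u/P_{\underline{p}}(u)$ is between $\mathrm{e}^u/(C_\Delta(1+u)^{\Delta-1})$ and $\mathrm{e}^u$, the left side is $\mathrm{e}^{\lamp(t)(1+o(1))}$ with the $o(1)$ uniform in $\underline{p}$, giving $\lamp(t) = \ln t + O_\Delta(\ln \ln t)$ uniformly.

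Next I would compute the integral directly. Using the ODE to change variables $s \mapsto \lambda$, with $\mathrm{d}s = \mathrm{e}^{\lambda}/P_{\underline{p}}(\lambda)\,\mathrm{d}\lambda$ and $\lamp'(s) = \mathrm{e}^{-\lambda} P_{\underline{p}}(\lambda)$, we get
\begin{equation*}
\int \limits_t^\infty \left(\lamp'(s)\right)^2 \, \mathrm{d}s = \int \limits_{\lamp(t)}^\infty \mathrm{e}^{-2\lambda} P_{\underline{p}}(\lambda)^2 \cdot \frac{\mathrm{e}^{\lambda}}{P_{\underline{p}}(\lambda)} \, \mathrm{d}\lambda = \int \limits_{\lamp(t)}^\infty \mathrm{e}^{-\lambda} P_{\underline{p}}(\lambda) \, \mathrm{d}\lambda.
\end{equation*}
Now $\int_a^\infty \mathrm{e}^{-\lambda} P_{\underline{p}}(\lambda)\,\mathrm{d}\lambda$ can be evaluated by repeated integration by parts: since $P_{\underline{p}}(\lambda) = \sum_{k=0}^{\Delta-1}\frac{\lambda^k}{k!}q_{k+1}$, one checks that $\int_a^\infty \mathrm{e}^{-\lambda}\frac{\lambda^k}{k!}\,\mathrm{d}\lambda = \mathrm{e}^{-a}\sum_{j=0}^{k}\frac{a^j}{j!}$, so the whole integral equals $\mathrm{e}^{-a}\sum_{k=0}^{\Delta-1} q_{k+1}\sum_{j=0}^k \frac{a^j}{j!} = \mathrm{e}^{-a}\sum_{j=0}^{\Delta-1}\frac{a^j}{j!}\sum_{k=j}^{\Delta-1}q_{k+1}$. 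The leading term as $a \to \infty$ is $q_{1}\,\mathrm{e}^{-a}\frac{a^{\Delta-1}}{(\Delta-1)!} \cdot (\text{something})$ — wait, more carefully, the dominant term in $a$ is the $j = \Delta-1$ term, namely $\mathrm{e}^{-a}\frac{a^{\Delta-1}}{(\Delta-1)!}q_\Delta$, but $q_\Delta = p_\Delta$ can be zero. So I instead keep the exact finite sum and plug in $a = \lamp(t)$, then combine with $\mathrm{e}^{-\lamp(t)} = \lamp'(t)/P_{\underline{p}}(\lamp(t))$ and the identity $\int_0^t (\lamp'(s))^2\mathrm{d}s = F_{\underline{p}}(t)$ from \eqref{eq:Fp_integral_formula}.

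Actually the cleanest route avoids identifying the leading polynomial term: from the change of variables above, $\int_t^\infty (\lamp')^2 = \int_{\lamp(t)}^\infty \mathrm{e}^{-\lambda}P_{\underline{p}}(\lambda)\,\mathrm{d}\lambda$, and I claim this is $\sim 1/t$. Indeed, differentiating in $t$: $\frac{\mathrm{d}}{\mathrm{d}t}\int_t^\infty(\lamp')^2 = -(\lamp'(t))^2 = -\mathrm{e}^{-2\lamp(t)}P_{\underline{p}}(\lamp(t))^2$, while $\frac{\mathrm{d}}{\mathrm{d}t}(1/t) = -1/t^2$. So it suffices to show $t^2 \mathrm{e}^{-2\lamp(t)}P_{\underline{p}}(\lamp(t))^2 \to 1$, i.e. $t\,\mathrm{e}^{-\lamp(t)}P_{\underline{p}}(\lamp(t)) \to 1$, uniformly in $\underline{p}$ — and then conclude via L'Hôpital (both the ratio of the integrals and the ratio of derivatives are being compared; one must argue the integrals themselves tend to $0$, which is clear, so L'Hôpital applies). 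But $t = \int_0^{\lamp(t)} \mathrm{e}^u/P_{\underline{p}}(u)\,\mathrm{d}u$, so I need $\mathrm{e}^{-\lamp(t)}P_{\underline{p}}(\lamp(t)) \int_0^{\lamp(t)} \mathrm{e}^u/P_{\underline{p}}(u)\,\mathrm{d}u \to 1$; writing $\Phi(a) := \mathrm{e}^{-a}P_{\underline{p}}(a)\int_0^a \mathrm{e}^u/P_{\underline{p}}(u)\,\mathrm{d}u$, this is a Watson-type lemma computation: $\int_0^a \mathrm{e}^u/P_{\underline{p}}(u)\,\mathrm{d}u \sim \mathrm{e}^a/P_{\underline{p}}(a)$ as $a \to \infty$ because $P_{\underline{p}}$ grows only polynomially while $\mathrm{e}^u$ grows exponentially, and this asymptotic is uniform over the finite-dimensional compact family of admissible $P_{\underline{p}}$ (the coefficients $q_{k+1} \in [0,1]$ with $q_1 = 1$ fixed, so $P_{\underline{p}}(u) \ge 1$ always, which is what gives the uniformity). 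Combined with $\lamp(t) \to \infty$ uniformly, this yields $\Phi(\lamp(t)) \to 1$ uniformly in $\underline{p}$, and the claim follows.

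The main obstacle is the \textbf{uniformity in $\underline{p}$}: one must be careful that the Watson-lemma estimate $\int_0^a \mathrm{e}^u/P_{\underline{p}}(u)\,\mathrm{d}u \sim \mathrm{e}^a/P_{\underline{p}}(a)$ holds with error terms that do not degrade as $\underline{p}$ ranges over $\Pspace$ (in particular as $\underline{p} \to \chi_2$, where $P_{\underline{p}}(u) \to 1 + u$). The saving grace is that $\Pspace$ is compact, $P_{\underline{p}}(u) \ge q_1 = 1$ for all $u \ge 0$ and all $\underline{p}$, and $P_{\underline{p}}(u) \le (1+u)^{\Delta-1}$ uniformly, so a single pair of $\underline{p}$-independent bounds sandwiches everything; I would make this rigorous by splitting $\int_0^a = \int_0^{a - a^{1/2}} + \int_{a-a^{1/2}}^a$, bounding the first piece crudely by $\mathrm{e}^{a - a^{1/2}} \cdot a$ (absorbed into the error since $\mathrm{e}^{-a^{1/2}} \cdot P_{\underline{p}}(a) \to 0$ uniformly) and, on the second piece, using continuity of $P_{\underline{p}}$ together with its uniform polynomial growth to replace $P_{\underline{p}}(u)$ by $P_{\underline{p}}(a)(1 + o(1))$ with $o(1)$ uniform. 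A secondary (minor) point is justifying the L'Hôpital step: one checks $\int_t^\infty(\lamp')^2 \to 0$ (immediate from $F_{\underline{p}}(\infty) = \ev(D) \le \Delta < \infty$ and \eqref{eq:Fp_integral_formula}) and $1/t \to 0$, and that the ratio of derivatives has a limit, so the hypotheses of L'Hôpital are met.
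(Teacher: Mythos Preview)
Your proposal is correct and reduces to the same core fact as the paper, namely $t\,\lamp'(t)\to 1$ uniformly in $\underline{p}$ (equivalently $\psi_{\underline{p}}(a)/\gamma_{\underline{p}}(a)\to 1$ as $a\to\infty$ with $\gamma_{\underline{p}}(a)=\mathrm{e}^a/P_{\underline{p}}(a)$ and $\psi_{\underline{p}}(a)=\int_0^a\gamma_{\underline{p}}$). The difference is in how this Laplace-type asymptotic is established. The paper applies l'H\^opital directly to $\gamma_{\underline{p}}/\psi_{\underline{p}}$, obtaining $\gamma'_{\underline{p}}/\gamma_{\underline{p}} = 1 - P'_{\underline{p}}/P_{\underline{p}}$, and then uses the elementary bound $0\le P'_{\underline{p}}(\lambda)/P_{\underline{p}}(\lambda)\le(\Delta-1)/\lambda$ together with the uniform lower bound $\lamp(t)\ge\ln(t+1)$; uniformity in $\underline{p}$ drops out immediately from the single $\underline{p}$-free inequality. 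You instead estimate $\int_0^a \mathrm{e}^u/P_{\underline{p}}(u)\,\mathrm{d}u$ by splitting at $a-\sqrt{a}$ and using $P_{\underline{p}}(u)/P_{\underline{p}}(a)=1+O((\Delta-1)/\sqrt{a})$ on the main piece, which also gives the uniformity but with a bit more work. Both are valid; the paper's route is shorter and avoids the integral splitting, while yours is more self-contained (no l'H\^opital on the $\gamma/\psi$ ratio) and makes the Watson-lemma mechanism explicit. Your detour through the explicit gamma-function evaluation of $\int_a^\infty \mathrm{e}^{-\lambda}P_{\underline{p}}(\lambda)\,\mathrm{d}\lambda$ is unnecessary and should be trimmed in a final write-up.
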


\begin{proof}
Let $\underline{p} \in \Pspace$ and recall the corresponding~$(q_k)_{k=1}^{\Delta}$ from~\eqref{eq:qk_def}. Let us introduce the functions
\begin{equation}\label{eq:P_gamma_psi_def}
P_{\underline{p}}(\lambda) := \sum \limits_{k=0}^{\Delta-1} \frac{\lambda^k}{k!} q_{k+1},
\qquad
\gamma_{\underline{p}}(\lambda) := \frac{\mathrm{e}^{\lambda}}{P_{\underline{p}}(\lambda)}
\qquad \text{and} \qquad
\psi_{\underline{p}}(\lambda) := \int \limits_0^\lambda \gamma_{\underline{p}}(s) \, \mathrm{d}s,
\qquad \lambda \in \mathbb{R}_+.
\end{equation}

Note that we have
$\lamp'(t) = 1/\gamma_{\underline{p}}(\lamp(t))$ (cf.~\eqref{eq:lambda_p_ode_with_q}), therefore, $t = \psi_{\underline{p}}(\lamp(t))$ and $\lamp(t) = \psi_{\underline{p}}^{-1}(t)$. Moreover, for any $\underline{p} \in \Pspace$ and for any $\lambda \in \mathbb{R}_+$, we have $\gamma_{\underline{p}}(\lambda) \le \exp(\lambda)$, and therefore, $\psi_{\underline{p}}(\lambda) \le \exp(\lambda)-1$. Hence, $\lamp(t) \ge \ln (t+1)$ for any $t \in \mathbb{R}_+$.

In order to prove~\eqref{eq:int_lam_p_diff^2_asymp}, it is enough to show
\begin{equation}\label{eq:goal_with_gamma}
\lamp'(t) \sim \frac 1t,
\qquad \text{or equivalently,} \qquad
\lim \limits_{t \to \infty} \frac{\gamma_{\underline{p}}(\lamp(t))}{\psi_{\underline{p}}(\lamp(t))} = 1.
\end{equation}

Using l'H\^{o}pital's rule and~\eqref{eq:P_gamma_psi_def}, we obtain that~\eqref{eq:goal_with_gamma} is equivalent to
\begin{equation}\label{eq:goal_with_P}
\lim \limits_{t \to \infty} \frac{P_{\underline{p}}'(\lamp(t))}{P_{\underline{p}}(\lamp(t))} = 0.
\end{equation}

Thus, our goal is to show that \eqref{eq:goal_with_P} holds uniformly in~${\underline{p} \in \Pspace}$.
For any $\underline{p} \in \Pspace$, we have
\begin{equation}\label{eq:dP/P_uniform_bound}
0 \le P_{\underline{p}}'(\lambda)
\stackrel{\eqref{eq:P_gamma_psi_def}}{\le}
\frac{\Delta-1}{\lambda} \cdot P_{\underline{p}}(\lambda),
 \quad
\lambda \in \mathbb{R}_+,
\qquad \text{i.e.,} \qquad
0 \le \frac{P_{\underline{p}}'(\lamp(t))}{P_{\underline{p}}'(\lamp(t))} \le \frac{\Delta-1}{\ln (t+1)},
 \quad t \in \mathbb{R}_+.
\end{equation}

By~\eqref{eq:dP/P_uniform_bound}, we obtain that~\eqref{eq:goal_with_P} holds uniformly in~${\underline{p} \in \Pspace}$. Therefore,~\eqref{eq:goal_with_gamma} and~\eqref{eq:int_lam_p_diff^2_asymp} also hold uniformly in~${\underline{p} \in \Pspace}$.
\end{proof}

\begin{proof}[Proof of Theorem~\ref{thm:tc_asymp_behavior_disc} (using Theorem~\ref{thm:tc_asymp_behavior_cont})]
Let $(\underline{p}^m)_{m=1}^\infty$ denote a sequence in $\nonreg$ such that $\underline{p}^m \Rightarrow \chi_2$ as~${m \to \infty}$ and let~$D^m$ be a random variable with distribution~$\underline{p}^m$. In this proof we use the notation $h_m \sim g_m$ for sequences $h_m\colon \mathbb{N}_+ \to \mathbb{R}$, $g_m\colon \mathbb{N}_+ \to \mathbb{R}$ if ${\lim h_m/g_m = 1}$ as~${m \to \infty}$. We also use $h_m=o(g_m)$ if
${\lim h_m/g_m = 0}$ as~${m \to \infty}$. Let us denote $\varepsilon^m:=1-p_2^m=\sum_{k=3}^{\Delta} p^m_k$.

Note that $\hat{t}_c(\underline{p}^m) \to \infty$ as $m \to \infty$ (cf.~\eqref{eq:tc_asymp_cont}) and
$\int_{\hat{t}_c(\underline{p}^m)}^\infty \left( \lambda_{\underline{p}^m}'(s) \right)^2 \, \mathrm{d}s \stackrel{\eqref{eq:int_lam_p_diff^2_asymp}}{\sim}
\frac{1}{\hat{t}_c(\underline{p}^m)} \stackrel{\eqref{eq:tc_asymp_cont}}{\sim} \ev[D^m(D^m-2)]$. 

This implies the relation $\int_{\hat{t}_c(\underline{p}^m)}^\infty \left( \lambda_{\underline{p}^m}'(s) \right)^2 \, \mathrm{d}s = \ev[D^m(D^m-2)] + o(\varepsilon^m)$ that we use in $(*)$ below:
\begin{align}
\begin{split}\label{eq:tc_with_expectation}
1-t_c(\underline{p}^m) \stackrel{\eqref{eq:tc_connection}}{=}&
1-\frac 12 F_{\underline{p}}(\hat{t}_c(\underline{p}^m))
\stackrel{\eqref{eq:Fp_integral_formula}, \, \eqref{eq:Fp_limit}}{=}
1-\frac 12 \left[ \ev(D^m) - \int \limits_{\hat{t}_c(\underline{p}^m)}^\infty \left( \lambda_{\underline{p}^m}'(s) \right)^2 \, \mathrm{d}s \right]
\stackrel{(*)}{=}\\
&1-\frac 12 \left[ \ev(D^m) - \ev(D^m(D^m-2)) + o(\varepsilon^m) \right] = \frac 12 \ev[(D^m)^2-3D^m+2]+o(\varepsilon^m).
\end{split}
\end{align}
Equation~\eqref{eq:tc_with_expectation} implies Theorem~\ref{thm:tc_asymp_behavior_disc}.
\end{proof}

\subsection{Asymptotic behavior in the continuous-time RDCP}\label{subsec:cont_time_proof}

In this section we state the key Lemma~\ref{lem:eigenfunction_bounds} and derive our main result Theorem~\ref{thm:tc_asymp_behavior_cont} from it.
For that purpose, we introduce~$\tunder$ and~$\tover$ that will serve as lower and upper bounds for the critical time~$\tc$ (the smaller the~$\delta$, the closer these bounds are).
We will introduce an operator $\Toperator{\hat{t}}$ (closely related to the branching operator of the multi-type branching process that arises as the local weak limit $G^\infty_{\underline{p}}(\hat{t})$ of the RDCP graph $G^n_{\underline{p}}(\hat{t})$ at time $\hat{t}$). In Theorem~\ref{thm:T_operator_prop} below we use the results of \cite{RSW} to prove (among other things) that if $\hat{t} \leq \tc$ then the principal eigenvalue of $\Toperator{\hat{t}}$ is less than or equal to $1$, but if $\hat{t}\geq \tc$ then it is greater than or equal to $1$.
Lemma~\ref{lem:eigenfunction_bounds} states that if we fix some small but positive $\delta$, then for $\underline{p}$ close enough to $\chi_2$, we can construct non-negative functions~$\wunder$ and~$\wover$ such that $\Toperator{\tunder}\wunder \le \wunder$ and $\Toperator{\tover}\wover \ge \wover$. We will then show that this implies $\tunder \leq \tc \leq \tover$, and conclude Theorem~\ref{thm:tc_asymp_behavior_cont}. 
 
Recall Notation~\ref{not:deg_constraint_distr}.

\begin{definition}[Approximations of the critical time]
Let $\underline{p} \in \nonreg$ and $\delta \in (0,1]$. Let us define
\begin{equation}\label{eq:tc_approx}
\tunder := \frac{1-\delta}{\sum \limits_{k=3}^\Delta k(k-2)p_k},
\qquad\qquad
\tover := \frac{1+\delta}{\sum \limits_{k=3}^\Delta k(k-2)p_k}.
\end{equation}
\end{definition}

We will prove Theorem~\ref{thm:tc_asymp_behavior_cont} by showing that if $\underline{p}$ is close enough to the $2$-regular distribution~$\chi_2$, then the values defined in~\eqref{eq:tc_approx} squeeze the critical time~$\tc$, i.e., ${\tunder \le \tc \le \tover}$. Before that, we need to introduce the probability density function $\Hp$.

\begin{notation}[The density function~$H_{\underline{p}}$]\label{not:Hp}
Let~$\underline{p} \in \nonreg$. We introduce the notation~$\Hp(t)$ following \cite[Definition~4.7, Lemma~4.8]{RSW}. Namely, $\Hp(t)$ is the probability density function of $\tau_{\underline{p}}$, where
\begin{equation}\label{eq:tau_def}
\tau_{\underline{p}} := \min \{\, t \,:\, X_{\underline{p}}(t) = D_{\underline{p}}-1 \,\},
\end{equation}
where $X_{\underline{p}}(t)$ is the counting process of an inhomogeneous Poisson point process on $\mathbb{R}_+$ with intensity function~$\lambda'_{\underline{p}}(t)$ (in particular, $X_{\underline{p}}(t) \sim \text{POI}(\lamp(t))$), and $D_{\underline{p}}$ denotes a random variable with distribution $\underline{p}$, which is independent of the process $X_{\underline{p}}(t)$.

Similarly to $\lamp$ (cf.~Notation~\ref{not:lambda}), we emphasize the dependence of $\Hp(t)$ on the distribution~$\underline{p}$ with the subscript, and in the $2$-regular case we omit the subscript, i.e., we let $H(t) := H_{\chi_2}(t)$.
\end{notation}

In \cite[Theorem~1.45]{RSW} a spectral characterization of the critical time $\tc$ is given in terms of the so-called branching operator $B_{\underline{p}, \hat{t}}$ (that was defined in \cite[Definition~1.43]{RSW}) of the multi-type branching process that reproduces the local weak limit $G^\infty_{\underline{p}}(\hat{t})$ (see \cite[Theorem~1.41]{RSW}). Instead of the branching operator $B_{\underline{p}, \hat{t}}$, we will work with the operator $\Toperator{\hat{t}}$ (see Definition \ref{def:T_operator} below), but let us 
already note that 
$\Toperator{\hat{t}}$ has a clear connection to $B_{\underline{p}, \hat{t}}$ that we will point out in the proof of Theorem~\ref{thm:T_operator_prop}.

\begin{definition}[Transformed branching operator]\label{def:T_operator}
Let ${\underline{p} \in \Pspace}$, ${\hat{t} \in \mathbb{R}_+ \cup \{+\infty\}}$ and ${\varphi:\mathbb{R}_+ \to \mathbb{R}}$. We define the linear operator ${\Toperator{\hat{t}}\varphi\colon \mathbb{R}_+ \to \mathbb{R}}$ by
\begin{equation}\label{eq:T_operator_def}
(\Toperator{\hat{t}}\varphi)(t) := \int \limits_0^\infty \Hp(s)\cdot \varphi(s) \cdot (\hat{t} \wedge t \wedge s) \, \mathrm{d}s, \qquad t \in \mathbb{R}_+.
\end{equation}
\end{definition}

In \cite[Theorem~1.45, Lemma~4.15]{RSW} it was proved that the branching operator~$B_{\underline{p},\hat{t}}$ has various nice and useful properties. Moreover, a spectral characterization of the critical time~$\tc$ was given in terms of the branching operator~$B_{\underline{p},\hat{t}}$. Now we show that the operator~$\Toperator{\hat{t}}$ inherits these properties.

\begin{theorem}[Properties of the operator $\Toperator{\hat{t}}$]\label{thm:T_operator_prop}
For any $\underline{p} \in \Pspace$ and $\hat{t} \in \mathbb{R}_+ \cup \{ + \infty\}$, the operator $\Toperator{\hat{t}}$ satisfies the following properties.
\begin{enumerate}
\item\label{item:T_operator_sa_HS} The operator $\Toperator{\hat{t}}$ is self-adjoint and Hilbert--Schmidt on $L^2(\mathbb{R}_+,\, \Hp)$, and the $L^2(\mathbb{R}_+,\, \Hp)$ operator norm $\norm{\Toperator{\hat{t}}}$ of $\Toperator{\hat{t}}$ is~finite.

\item\label{item:op_norm_mon_in_t} The function $\hat{t} \mapsto \norm{\Toperator{\hat{t}}}$ is strictly increasing and continuous on $[0,+\infty)$.

\item\label{item:op_norm_1_at_tc} We have $\norm{\Toperator{\tc}}=1$, i.e., the norm of the operator $\Toperator{\hat{t}}$ at the critical time $\hat{t}=\tc$ equals~one.

\item\label{item:princip_eigenvalue} $\norm{\Toperator{\hat{t}}}$ is the principal eigenvalue of $\Toperator{\hat{t}}$ with multiplicity 1 and with positive eigenfunctions.

\item The only eigenvalue with a non-negative eigenfunction is $\norm{\Toperator{\hat{t}}}$.
\end{enumerate}
\end{theorem}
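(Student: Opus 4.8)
The plan is to realise $\Toperator{\hat t}$ as a unitary conjugate of the branching operator $B_{\underline p,\hat t}$ of \cite[Definition~1.43]{RSW} and then transport the properties already established for $B_{\underline p,\hat t}$ in \cite[Lemma~4.15, Theorem~1.45]{RSW}. Unitary equivalence preserves self-adjointness, membership in the Hilbert--Schmidt class, the operator norm, and the whole spectrum together with multiplicities; and, since the intertwining map will be (essentially) multiplication by the strictly positive function $\sqrt{\Hp}$ and/or a change of the type variable, it also preserves positivity of eigenfunctions. Granting the equivalence: item~\ref{item:T_operator_sa_HS}, item~\ref{item:princip_eigenvalue} and the last item are the transported content of \cite[Lemma~4.15]{RSW}; item~\ref{item:op_norm_1_at_tc} is the transported spectral characterization of $\tc$ from \cite[Theorem~1.45]{RSW}, giving $\norm{\Toperator{\tc}}=\norm{B_{\underline p,\tc}}=1$; and item~\ref{item:op_norm_mon_in_t} needs a short additional argument given below.

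To set up the equivalence I would unwind \cite[Definition~1.43]{RSW} and recognise, via the computation in \cite[Lemma~4.15]{RSW}, that the kernel of $B_{\underline p,\hat t}$ is a symmetrised (and possibly reparametrised) version of the kernel $(t,s)\mapsto \hat t\wedge t\wedge s$ taken with respect to $\Hp(s)\,\mathrm{d}s$; the probabilistic reading is that the ``type'' of a vertex in the local limit $G^\infty_{\underline p}(\hat t)$ is the time $\tau_{\underline p}$ at which it would saturate, which has density $\Hp$, and an edge between vertices of types $t$ and $s$ is present only if it is activated before time $\hat t\wedge t\wedge s$. One then checks that $U\varphi:=\sqrt{\Hp}\cdot\varphi$ (composed with the reparametrisation, if any) is a Hilbert-space isometry carrying $\Toperator{\hat t}$ to $B_{\underline p,\hat t}$, which gives item~\ref{item:T_operator_sa_HS}. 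Self-adjointness and the Hilbert--Schmidt bound can also be seen directly: with respect to $\Hp(s)\,\mathrm{d}s$ the kernel $\hat t\wedge t\wedge s$ is symmetric, it is a positive-definite kernel (so $\Toperator{\hat t}\geq 0$ and $\norm{\Toperator{\hat t}}$ is its largest eigenvalue), and $\norm{\Toperator{\hat t}}_{HS}^2=\iint(\hat t\wedge t\wedge s)^2\Hp(t)\Hp(s)\,\mathrm{d}t\,\mathrm{d}s$, which is at most $\hat t^2$ for finite $\hat t$; for $\hat t=+\infty$ one either invokes the transported statement of \cite{RSW}, or notes that by \eqref{eq:int_lam_p_diff^2_asymp} (which forces $\lamp'(t)\sim 1/t$, hence $\lamp(t)\sim\ln t$) the tail $\pr(\tau_{\underline p}>t)$ decays fast enough that $\iint(t\wedge s)^2\Hp(t)\Hp(s)\,\mathrm{d}t\,\mathrm{d}s=\ev\big[(\tau_{\underline p}\wedge\tau_{\underline p}')^2\big]<\infty$. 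For item~\ref{item:princip_eigenvalue} and the last item one may transport \cite[Lemma~4.15]{RSW} through $U$, or argue directly: for $\hat t>0$ the kernel $\hat t\wedge t\wedge s$ is strictly positive a.e.\ and $\Hp>0$ a.e., so $\Toperator{\hat t}$ is a positivity-improving self-adjoint compact operator and Jentzsch's theorem yields that $\norm{\Toperator{\hat t}}$ is a simple eigenvalue with a strictly positive eigenfunction and is the only eigenvalue admitting a non-negative eigenfunction.

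It remains to prove item~\ref{item:op_norm_mon_in_t}. For monotonicity, if $0\le\hat t_1<\hat t_2$ then $\hat t_1\wedge t\wedge s\le\hat t_2\wedge t\wedge s$ pointwise, so $\Toperator{\hat t_2}-\Toperator{\hat t_1}$ is a positive self-adjoint operator and hence $\norm{\Toperator{\hat t_1}}\le\norm{\Toperator{\hat t_2}}$. For strictness, let $w>0$ be the principal eigenfunction of $\Toperator{\hat t_1}$ from item~\ref{item:princip_eigenvalue}; then
\begin{equation*}
\norm{\Toperator{\hat t_2}}\ \ge\ \frac{\langle\Toperator{\hat t_2}w,w\rangle}{\langle w,w\rangle}\ =\ \norm{\Toperator{\hat t_1}}+\frac{1}{\langle w,w\rangle}\iint\big((\hat t_2\wedge t\wedge s)-(\hat t_1\wedge t\wedge s)\big)\,w(t)\,w(s)\,\Hp(t)\,\Hp(s)\,\mathrm{d}t\,\mathrm{d}s\ >\ \norm{\Toperator{\hat t_1}},
\end{equation*}
the last inequality because the kernel difference is strictly positive on $\{\,t\wedge s>\hat t_1\,\}$, a set of positive $\Hp\otimes\Hp$-measure (as $\tau_{\underline p}$ has unbounded support), and $w>0$. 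For continuity, $\big|\,\norm{\Toperator{\hat t'}}-\norm{\Toperator{\hat t}}\,\big|\le\norm{\Toperator{\hat t'}-\Toperator{\hat t}}\le\norm{\Toperator{\hat t'}-\Toperator{\hat t}}_{HS}$ and $\norm{\Toperator{\hat t'}-\Toperator{\hat t}}_{HS}^2=\iint\big((\hat t'\wedge t\wedge s)-(\hat t\wedge t\wedge s)\big)^2\Hp(t)\Hp(s)\,\mathrm{d}t\,\mathrm{d}s\to 0$ as $\hat t'\to\hat t$ by dominated convergence (for $\hat t'$ in a bounded neighbourhood of $\hat t$ the integrand is dominated by a constant multiple of $\Hp(t)\Hp(s)$), which proves continuity on $[0,+\infty)$.

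The main obstacle is the first step: correctly matching the kernel of $\Toperator{\hat t}$ with that of $B_{\underline p,\hat t}$ as defined in \cite{RSW}, i.e.\ pinning down the exact density weight and change of variables that turn \cite[Definition~1.43, Lemma~4.15]{RSW} into the clean form \eqref{eq:T_operator_def}. Once this intertwining is in place, items~\ref{item:T_operator_sa_HS}, \ref{item:op_norm_1_at_tc}, \ref{item:princip_eigenvalue} and the last item are pure transport, and item~\ref{item:op_norm_mon_in_t} follows from the self-contained argument above.
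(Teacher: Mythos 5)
Your strategy is the same as the paper's: conjugate $\Toperator{\hat t}$ to the branching operator $B_{\underline p,\hat t}$ of \cite{RSW} and transport its known properties. The paper carries this out with the multiplication operator $(\mathcal{D}_{\underline p}\varphi)(t)=\frac{E_{\underline p}(t)}{\lamp(t)}\varphi(t)$ and the identity $\Toperator{\hat t}=\mathcal{D}_{\underline p}^{-1}B_{\underline p,\hat t}\mathcal{D}_{\underline p}$; the key point is the density relation $\Hp(t)=\frac{E_{\underline p}(t)f_{\underline p}(t)}{\lamp(t)}$ and $\rho_{\underline p}(t)=\frac{\lamp(t)f_{\underline p}(t)}{E_{\underline p}(t)}$, so $\Hp/\rho_{\underline p}=(E_{\underline p}/\lamp)^2$ and $\mathcal{D}_{\underline p}^{-1}$ is an isometry from $L^2(\mathbb{R}_+,\rho_{\underline p})$ onto $L^2(\mathbb{R}_+,\Hp)$. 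Your guess that the intertwiner is ``(essentially) multiplication by $\sqrt{\Hp}$'' is not literally the paper's map, but it is consistent with it: $\sqrt{\Hp}\circ\mathcal{D}_{\underline p}^{-1}=\sqrt{\rho_{\underline p}}$, so the two symmetrizations of $T$ and $B$ to $L^2(\mathbb{R}_+,\mathrm{d}t)$ coincide. You correctly flag the pinning-down of this intertwiner as the obstacle; once the explicit formula for $B$ from \cite[Claim~4.4]{RSW} is in hand, the conjugation is a one-line computation and items~\ref{item:T_operator_sa_HS}, \ref{item:op_norm_1_at_tc}, \ref{item:princip_eigenvalue} and the last item transport immediately. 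Only item~\ref{item:op_norm_1_at_tc} genuinely requires this transport (it encodes the spectral characterization of $\tc$ from \cite{RSW}, for which there is no self-contained substitute).

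Your supplementary direct arguments differ from the paper's pure transport and are mostly sound. The pointwise kernel comparison plus the Rayleigh-quotient computation for strict monotonicity and the HS-norm dominated-convergence argument for continuity give a clean self-contained proof of item~\ref{item:op_norm_mon_in_t} (the paper instead transports this from \cite[Lemma~4.15]{RSW}). Your Jentzsch/Krein--Rutman argument for items~\ref{item:princip_eigenvalue}--5 is also valid once compactness is known. One small caveat in your direct Hilbert--Schmidt bound at $\hat t=\infty$: $\ev[\tau_{\underline p}^2]$ is actually infinite in the $\underline p=\chi_2$ case (since $\Hp(t)\asymp\frac{1}{t^2\ln t}$), so the bound $(\tau\wedge\tau')^2\le\tau^2$ is useless; the finiteness of $\iint(t\wedge s)^2\Hp(t)\Hp(s)\,\mathrm{d}t\,\mathrm{d}s$ is true but hinges on the logarithmic gain from integrating both variables, so the phrase ``the tail decays fast enough'' needs to be replaced by the actual two-variable computation or by the transported HS statement, as you hedge.
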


\begin{proof}
We use the notation $z_{\underline{p},k}^t$, $f_{\underline{p}}(t)$, $E_{\underline{p}}(t)$, $\rho_{\underline{p}}(t)$ for the objects defined in \cite[Notation 1.34, Claim~1.36, Claim~4.3, Definition~4.7]{RSW}, but with a subscript to emphasize their dependence on the distribution $\underline{p}$:
\begin{equation}\label{eq:formulasss}
z_{\underline{p},k}^t := \mathrm{e}^{-\lamp(t)} \cdot \frac{\lamp^k(t)}{k!}\cdot p_{k+1}, \quad
f_{\underline{p}}(t) := \lamp'(t) \cdot \sum \limits_{k=1}^\infty z_{\underline{p},k}^t, \quad E_{\underline{p}}(t) := \frac{\sum \limits_{k=0}^{\infty} k \cdot z_{\underline{p},k}^t}{\sum \limits_{k=0}^{\infty} z_{\underline{p},k}^t}, \quad 
\rho_{\underline{p}}(t):= \frac{\lamp(t)f_{\underline{p}}(t)}{E_{\underline{p}}(t)}.
\end{equation}

It was proved in \cite[Claim~4.4]{RSW} that the branching operator~$B_{\underline{p},\hat{t}}$ (cf.\ \cite[Definition~1.42]{RSW}) associated to the multi-type branching process (cf.\ \cite[Definition~1.39]{RSW}) that arises as the local weak limit of the RDCP (cf.\ \cite[Theorem~1.41]{RSW})
acts on a function ${v \in L^2(\mathbb{R}_+,\, \rho_{\underline{p}})}$ as
\begin{equation}\label{eq:branching_op}
(B_{{\underline{p}}, \hat{t}} v)(t) = \frac{E_{\underline{p}}(t)}{\lamp(t)} \int \limits_0^\infty f_{\underline{p}}(s) \cdot v(s) \cdot \left(\hat{t} \wedge t \wedge s \right) \, \mathrm{d}s.
\end{equation}
Comparing \cite[Definition~4.7, Lemma~4.8]{RSW} with Notation~\ref{not:Hp} and~\eqref{eq:formulasss} we note that $\Hp(s)= \frac{E_{\underline{p}}(t) f_{\underline{p}}(t)}{\lamp(t)}$.

If we define the multiplication operator $\mathcal{D}_{\underline{p}}$ by $(\mathcal{D}_{\underline{p}}\varphi)(t):= \frac{E_{\underline{p}}(t)}{\lamp(t)} \cdot \varphi(t)$, then it is easy to check that we have $\Toperator{\hat{t}}=\mathcal{D}_{\underline{p}}^{-1} B_{{\underline{p}}, \hat{t}} \mathcal{D}_{\underline{p}}$ (cf.~equations~\eqref{eq:T_operator_def} and \eqref{eq:branching_op}). Thus, if we define $w:=\mathcal{D}_{\underline{p}}^{-1} v$ for a function $v\colon \mathbb{R}_+ \to \mathbb{R}$, then ${v \in L^2(\mathbb{R}_+,\, \rho_{\underline{p}})}$ if and only if ${w \in L^2(\mathbb{R}_+,\, \Hp)}$, and $B_{{\underline{p}}, \hat{t}}v = v$ is equivalent to ${\Toperator{\hat{t}}w = w}$. Therefore, the operators $B_{\underline{p},\hat{t}}$ and $\Toperator{\hat{t}}$ share the same eigenvalues.
The positivity of the principal eigenfunction is also preserved.
The operator $\Toperator{\hat{t}}$ is self-adjoint w.r.t.\ $L^2(\mathbb{R}_+,\, \Hp)$ (the proof is analogous to that of the self-adjointness of $B_{\underline{p},\hat{t}}$ w.r.t.~$L^2(\mathbb{R}_+,\, \rho_{\underline{p}})$, cf.\ \cite[Lemma~4.14~(i)]{RSW}).
The Hilbert--Schmidt norm of $\Toperator{\hat{t}}$ with respect to $L^2(\mathbb{R}_+,\, \Hp)$ is equal to the Hilbert--Schmidt norm of $B_{\underline{p},\hat{t}}$ with respect to $L^2(\mathbb{R}_+,\, \rho_{\underline{p}})$, cf.\ \cite[(A.1)]{RSW}.
Thus,~$\Toperator{\hat{t}}$ has the Hilbert--Schmidt property (even in the $\hat{t}=\infty$ case) by \cite[Lemma~4.14~(ii)]{RSW}. 

The $L^2(\mathbb{R}_+,\, \Hp)$ operator norm of $\Toperator{\hat{t}}$ is equal to the $L^2(\mathbb{R}_+,\, \rho_{\underline{p}})$ operator norm of $B_{\underline{p},\hat{t}}$ for every~$\hat{t}$, thus the remaining properties of the operator~$\Toperator{\hat{t}}$ listed in Theorem~\ref{thm:T_operator_prop} follow directly from the analogous properties established for~$B_{\underline{p},\hat{t}}$ in \cite[Theorem~1.45, Lemma~4.15]{RSW}.
\end{proof}

\begin{notation}[Almost $2$-regular degree constraint distributions]
Recall the set $\nonreg$ from~\eqref{eq:non_reg}. For any $\varepsilon_0 \in (0,1)$, we introduce
\begin{equation}\label{eq:near_reg_set}
\nearreg{\varepsilon_0} := \left\{ \, (p_2, \, p_3, \, \dots, \, p_\Delta ) \in \nonreg \; \Big| \; 1-p_2< \varepsilon_0 \, \right\}.
\end{equation}
\end{notation}

Recall the definition of $\tunder$ and $\tover$ from \eqref{eq:tc_approx}. The next lemma is the key to the proof of our main result.

\begin{lemma}[Lower and upper approximations of the principal eigenfunction]\label{lem:eigenfunction_bounds}
For any ${\delta \in (0, 1)}$, there exists ${\varepsilon_0>0}$ such that for any ${\underline{p} \in \nearreg{\varepsilon_0}}$, there exist non-negative functions~$\wunder(t)$ and~$\wover(t)$ such that for the operator $\Toperator{\hat{t}}$ defined in~\eqref{eq:T_operator_def}, we have
\begin{equation}\label{eq:goal_with_operator}
(\Toperator{\tunder}\wunder)(t) \le \wunder(t)
\quad \text{and} \quad (\Toperator{\tover}\wover)(t) \ge \wover(t) \quad \text{for any} \quad t \in \mathbb{R}_+.
\end{equation}
\end{lemma}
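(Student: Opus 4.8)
The idea is to construct the test functions $\wunder$ and $\wover$ by perturbing the principal eigenfunction of the operator in the exactly $2$-regular case $\underline{p}=\chi_2$, where one can compute things explicitly. Recall from the claim proved above that for any $\underline{p}\in\Pspace$ we have $\lamp(t)\ge\ln(t+1)$, and that $\lamp'(t)\sim 1/t$ uniformly in $\underline{p}$; in particular, for $\underline{p}$ close to $\chi_2$ the function $\lamp$ is uniformly close to $\lambda=\lambda_{\chi_2}$ on compact time intervals, and the density $\Hp$ (which is the law of $\tau_{\underline{p}}=\min\{t:X_{\underline{p}}(t)=D_{\underline{p}}-1\}$) is a perturbation of $H=H_{\chi_2}$. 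Because the relevant time scales $\tunder,\tover$ are of order $1/\varepsilon$ with $\varepsilon=1-p_2\to 0$, most of the mass of $\Hp$ will sit far out, and one has to track the perturbation carefully on the whole half-line, not just on compacts.

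\textbf{Key steps, in order.} First, I would record the eigenvalue/eigenfunction problem $\Toperator{\hat t}w=\mu w$ in a more workable form. Since $(\Toperator{\hat t}\varphi)(t)=\int_0^\infty\Hp(s)\varphi(s)(\hat t\wedge t\wedge s)\,\mathrm ds$, the kernel $\hat t\wedge t\wedge s$ is (up to the cutoff at $\hat t$) the Green's function of $-\mathrm d^2/\mathrm dt^2$ on $\mathbb R_+$ with a Neumann-type condition at $0$ and Dirichlet at $\hat t$; so any fixed point of $\Toperator{\hat t}$ is constant on $[\hat t,\infty)$ and on $[0,\hat t]$ solves a second-order linear ODE of Sturm--Liouville type, namely (schematically) $-\varphi''(t)=\tfrac1\mu\,\Hp(t)\varphi(t)$, with $\varphi'(0)=0$ and $\varphi$ matching a constant at $t=\hat t$. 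This is the ODE promised in the outline of Section~\ref{subsec:eigenfunction_bounds}. Second, at $\underline{p}=\chi_2$ I would identify the exact solution: by Remark~\ref{rem:2reg_hattc_infty} the critical time is $+\infty$, equivalently $\norm{\Toperator{\hat t}}<1$ for all finite $\hat t$, so the relevant comparison functions for the $2$-regular case are genuine super-/sub-solutions rather than exact eigenfunctions. Third — the heart of the argument — I would make the perturbative ansatz. For the lower bound I want $\wunder$ with $\Toperator{\tunder}\wunder\le\wunder$: take $\wunder$ to be (a truncation/modification of) the function $t\mapsto 1+c\,t$ or more precisely the solution of the $\chi_2$-comparison ODE, rescaled to the window $[0,\tunder]$, and verify the inequality by plugging in and using $\int_0^\infty \Hp(s)(\hat t\wedge s)\,\mathrm ds$-type moment estimates. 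The point where the factor $\sum_{k\ge3}k(k-2)p_k$ enters is exactly here: a first-order expansion of $\int_0^\infty \Hp(s)\,s\,\varphi(s)\,\mathrm ds$ in $\varepsilon$ produces $\ev[D_{\underline{p}}(D_{\underline{p}}-2)]=\sum_{k\ge3}k(k-2)p_k$ as the leading correction to the $\chi_2$ value, which is why $\tunder=(1-\delta)/\sum_{k\ge3}k(k-2)p_k$ and $\tover=(1+\delta)/\sum_{k\ge3}k(k-2)p_k$ are the right thresholds. For the upper bound, symmetrically, one constructs $\wover$ with $\Toperator{\tover}\wover\ge\wover$ using a super-solution of the comparison ODE. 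Fourth, I would fix $\delta\in(0,1)$, choose $\varepsilon_0$ small enough that all the $o(\varepsilon)$ error terms in these expansions are dominated by the $\delta$-gap, uniformly over $\underline{p}\in\nearreg{\varepsilon_0}$, which yields the two desired inequalities in~\eqref{eq:goal_with_operator}.

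\textbf{Main obstacle.} The delicate part is controlling the perturbation $\Hp-H$ and the eigenfunction globally, on $[0,\infty)$, at the time scale $\hat t\asymp 1/\varepsilon$ which itself diverges as $\underline{p}\to\chi_2$. Pointwise closeness of $\lamp$ to $\lambda$ holds only on compacts, so I expect to need the substitution $\lambda=\lamp(t)$ (so that $\mathrm d\lambda=\lamp'(t)\,\mathrm dt$ with $\lamp'\sim1/t$) to turn everything into integrals against $\mathrm e^{-\lambda}\lambda^k/k!$ weights, where the dependence on $\underline{p}$ is only through the finitely many numbers $p_3,\dots,p_\Delta$ and the combinatorial tails $q_k$; then the needed estimates become clean, $\Delta$-uniform tail bounds on Poisson-type sums. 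Getting the error bookkeeping to be genuinely uniform in $\underline{p}\in\nearreg{\varepsilon_0}$ — rather than just for a fixed sequence — is the technical crux, and it is exactly what makes the two-sided squeeze $\tunder\le\tc\le\tover$ (hence Theorem~\ref{thm:tc_asymp_behavior_cont}) go through.
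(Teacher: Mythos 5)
Your high-level strategy is right and matches the paper: perturb around $\chi_2$, identify the first-order correction in $\varepsilon$, and squeeze the critical time between $\tunder$ and $\tover$ by building explicit sub-/super-eigenfunctions whose verification is uniform in $\underline{p}\in\nearreg{\varepsilon_0}$. You also correctly extract the constant $\ev[D(D-2)]$ as the leading-order coefficient and recognize that all the awkward time dependence becomes tractable after the substitution $\lambda=\lamp(t)$. However, there are genuine gaps.

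First, the boundary conditions are reversed. From $(\Toperator{\hat t}\varphi)(0)=0$ for every $\varphi$, any eigenfunction must vanish at the origin; the Green's function $t\wedge s$ corresponds to a \emph{Dirichlet} condition at $0$ and a Neumann-type condition at $\hat t$, not the other way around. The paper's Proposition~\ref{prop:principal_eigenfn_ODE} gives $w_{\underline{p}}(0)=0$, $w'_{\underline{p}}(0)=1$, and constancy on $[\tc,\infty)$.

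Second, and more importantly, your concrete ansatz is wrong. You propose $\wunder$ of the form $t\mapsto 1+ct$ or ``the solution of the $\chi_2$-comparison ODE rescaled to $[0,\tunder]$.'' But the $\chi_2$-eigenfunction is $w_{\chi_2}(t)=\lambda(t)$ (Claim~\ref{cl:2reg_eigenfunction}), which grows like $\ln t$ — it is neither affine nor linearly rescalable. More to the point, $\lambda(t)$ alone contains no information about $\underline{p}$, so it cannot distinguish $\tunder$ from $\tover$: you need the first-order Taylor correction. The paper's test functions are $\wunder(t)=\lambda(t)+(1+\tfrac\delta2)\varepsilon\,\werror(t)$ and $\wover(t)=\lambda(t)+(1-\tfrac\delta2)\varepsilon\,\werror(t)$ (truncated constant beyond $\tunder$, resp.\ $\tover$), where $\werror$ is the directional derivative of the eigenfunction, solving $\werror''=-H\werror-\Herror\lambda$, $\werror(0)=\werror'(0)=0$. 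The leading-order terms cancel exactly because $\werror$ solves this inhomogeneous ODE; without it the required inequality does not close. Note the asymmetry: the \emph{larger} factor $1+\delta/2$ goes with the \emph{lower} time $\tunder$, because $\werror$ is negative for large $t$ (Claim~\ref{cl:asymp}(f)).

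Third, you do not address how to handle the integral against $\Hp$ itself, whose dependence on $\underline{p}$ is complicated. The paper avoids this by a stochastic-dominance intermediary: it constructs explicit random variables $\tauunder{\delta}\preccurlyeq\tau_{\underline{p}}\preccurlyeq\tauover{\delta}$ whose survival functions are the \emph{explicit} first-order approximations $\Hintunder{\delta}$, $\Hintover{\delta}$, then reduces the target inequalities~\eqref{eq:goal_with_operator} (after differentiating in $t$ and using monotonicity of $\wunder,\wover$) to expectations against these surrogate laws. This is the device that makes the uniform-in-$\underline{p}$ bookkeeping tractable; it is the main mechanism you leave unspecified under ``moment estimates.''

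In short: the plan is right, but the concrete test function, its defining ODE, and the stochastic-dominance reduction are all missing, and two of the structural statements you do make (boundary conditions, form of the $\chi_2$-solution) are incorrect.
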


We postpone the proof of Lemma~\ref{lem:eigenfunction_bounds} to Section~\ref{subsec:eigenfunction_bounds}, and now we conclude Theorem~\ref{thm:tc_asymp_behavior_cont}.

\begin{proof}[Proof of Theorem~\ref{thm:tc_asymp_behavior_cont}]
First, we recall the following result of Perron--Frobenius theory for positive operators, see e.g.\ \cite[Section 6.5 Theorem 5.2]{M}. If~$T$ is a linear, Hilbert--Schmidt, positive operator and $w$ is a non-negative function such that
\begin{itemize}
\item $Tw \le \mu w$, then the principal eigenvalue of $T$ is at most $\mu$,
\item $Tw \ge \mu w$, then the principal eigenvalue of $T$ is at least $\mu$.
\end{itemize}
Therefore, we can deduce from statement~\ref{item:T_operator_sa_HS} of Theorem~\ref{thm:T_operator_prop} and Lemma~\ref{lem:eigenfunction_bounds} that for any $\delta \in (0, 1)$, there exists ${\varepsilon_0>0}$ such that for any $\underline{p} \in \nearreg{\varepsilon_0}$, the principal eigenvalue of~$\Toperator{\tunder}$ is at most~1 and the principal eigenvalue of~$\Toperator{\tover}$ is at least~1. 

Hence by statements~\ref{item:op_norm_1_at_tc} and~\ref{item:princip_eigenvalue} of Theorem~\ref{thm:T_operator_prop}, we obtain that
\begin{equation}\label{eq:op_norm_ineq}
\norm{\Toperator{\tunder}} \le \norm{\Toperator{\tc}} \le \norm{\Toperator{\tover}}.
\end{equation}

The inequalities~\eqref{eq:op_norm_ineq} together with statement \ref{item:op_norm_mon_in_t} of Theorem~\ref{thm:T_operator_prop} imply that
\begin{equation}\label{eq:goal_tc}
\tunder \le \tc \le \tover.
\end{equation}
Note that $\underline{p}^m \Rightarrow \chi_2$ is equivalent to $p_2^m \to 1$ as $m \to \infty$, thus for any $\delta\in (0,1)$ we have $\underline{p}^m \in \nearreg{\varepsilon_0}$ for large enough $m$. 
Therefore, by \eqref{eq:tc_approx}, the inequalities~\eqref{eq:goal_tc} imply Theorem~\ref{thm:tc_asymp_behavior_cont}.
\end{proof}

\subsection{Approximations of the principal eigenfunction}\label{subsec:eigenfunction_bounds}

In this section our goal is to prove Lemma~\ref{lem:eigenfunction_bounds}. We will explicitly construct the test functions~$\wunder$ and~$\wover$ that satisfy~\eqref{eq:goal_with_operator}.
For that purpose, we will use the first-order Taylor approximation of the principal eigenfunction of the operator~$\Toperator{\hat{t}}$ in terms of ${\varepsilon := 1-p_2}$ as $\varepsilon \to 0$. We will use directional derivatives, i.e., we let ${p_k = \indic{k=2} + \varepsilon \cdot r_k}$, cf.~Definition~\ref{def:deg_distr_representation}.
We will have to calculate the directional derivative of~$\Toperator{\hat{t}}$. Recalling~\eqref{eq:T_operator_def} and Notation~\ref{not:Hp}, we see that 
${(\Toperator{\hat{t}}\varphi)(t)=\ev [ \varphi(\tau_{\underline{p}}) \cdot (\hat{t} \wedge t \wedge \tau_{\underline{p}} ) ]}$, where $\tau_{\underline{p}}$ is a random variable with p.d.f.~$H_{\underline{p}}$.
Thus, we need to calculate the directional derivative of the function~$H_{\underline{p}}$ (cf.~Definition~\ref{def:H_approx} below). We use the first-order Taylor approximation of~$H_{\underline{p}}$ (as well as the survival function~$\Hint$ of $\tau_{\underline{p}}$) to explicitly construct auxiliary random variables $\tauunder{\delta}$ and~$\tauover{\delta}$ that serve as stochastic upper and lower bounds for~$\tau_{\underline{p}}$ (cf.~Lemma~\ref{lem:stoch_dom}).
Since the principal eigenfunction $w_{\underline{p}}$ of~$\Toperator{\tc}$ satisfies a second-order linear ODE on~${[0,\tc)}$ (cf.~Proposition~\ref{prop:principal_eigenfn_ODE}), we can calculate its directional derivative, too (cf.~Definition~\ref{def:w_approx}). In~Section~\ref{sss:asymp_prop} we will see that these derivatives can also be explicitly calculated. This allows us to explicitly construct the test functions~$\wunder$ and~$\wover$ (cf.~Definition~\ref{def:w_approx} below). 
Lemma~\ref{lem:sufficient_inequalities} states two inequalities that only involve the explicitly constructed random variables~$\tauunder{\delta}$ and~$\tauover{\delta}$ and the explicitly constructed test functions~$\wunder$ and~$\wover$ that imply the desired Lemma~\ref{lem:eigenfunction_bounds}.

\begin{definition}[Alternative representation of degree distributions]\label{def:deg_distr_representation}
For any $\underline{p} \in \nonreg$, we define
\begin{equation}\label{eq:eps_rk_def}
\varepsilon := 1-p_2,
\quad
r_2 := -1
\quad \text{and} \quad
r_k := \frac{p_k}{\varepsilon} \text{ for any } k = 3, \, \dots, \, \Delta.
\end{equation}

Note that $\sum_{d=3}^\Delta r_d = 1$, or equivalently, $\sum_{d=2}^\Delta r_d = 0$, and we have
\begin{equation}\label{eq:distribution_with_eps}
p_k = \indic{k=2} + \varepsilon \cdot r_k, \qquad k=2, \, 3, \, \dots, \, \Delta.
\end{equation}

We also introduce the set $\Rspace$ of vectors $\underline{r}=(r_k)_{k=2}^\Delta$ constructed via~\eqref{eq:eps_rk_def}:
\begin{equation}\label{eq:R_space}
\Rspace :=\left\{ \, (r_2,\, r_3, \, \dots, \, r_\Delta ) \; \Big| \; r_2 = -1,\; r_k \geq 0 \text{ for any } k=3, \, 4, \, \dots, \, \Delta\; \text{ and } \sum \limits_{d=3}^\Delta r_d = 1 \, \right\}.
\end{equation}

For any $\underline{r} \in \Rspace$, we define $r_k := 0$ for $k=1$ and for each $k > \Delta$.
\end{definition}

Definition~\ref{def:deg_distr_representation} immediately implies the following claim.

\begin{claim}[Bijection between representations]\label{cl:bijection}
Equations~\eqref{eq:eps_rk_def} and~\eqref{eq:distribution_with_eps} provide a bijection between~$\nonreg$ (cf.~\eqref{eq:non_reg}) and the set~$\left\{\,(\varepsilon, \underline{r}) \;|\; \varepsilon \in (0,1], \; \underline{r} \in \Rspace \, \right\}$. Moreover, for any $\varepsilon_0 \in (0,1)$, this gives rise to a bijection between~$\nearreg{\varepsilon_0}$ (cf.~\eqref{eq:near_reg_set}) and the set~$\left\{\,(\varepsilon, \underline{r}) \;|\; \varepsilon \in (0,\varepsilon_0), \; \underline{r} \in \Rspace \, \right\}$.
\end{claim}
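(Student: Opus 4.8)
The plan is to exhibit the two maps in the claim explicitly, verify that each is well defined (i.e.\ lands in the asserted target set), and check that they are mutual inverses; since all operations involved are finite linear rearrangements of finitely many real numbers, no limiting or measure-theoretic argument is needed.

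First I would introduce the forward map $\Phi$ sending $\underline{p} \in \nonreg$ to the pair $(\varepsilon, \underline{r})$ given by~\eqref{eq:eps_rk_def}, and check that it lands in $\left\{(\varepsilon,\underline{r}) \mid \varepsilon \in (0,1],\ \underline{r} \in \Rspace\right\}$: the defining inequality $0 \le p_2 < 1$ of~$\nonreg$ gives $\varepsilon = 1 - p_2 \in (0,1]$, and then $r_2 = -1$, $r_k = p_k/\varepsilon \ge 0$ for $k \ge 3$, and $\sum_{k=3}^\Delta r_k = \varepsilon^{-1}\sum_{k=3}^\Delta p_k = \varepsilon^{-1}(1-p_2) = 1$, so $\underline{r} \in \Rspace$. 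Next I would introduce the reverse map $\Psi$ sending a pair $(\varepsilon, \underline{r})$ with $\varepsilon \in (0,1]$ and $\underline{r} \in \Rspace$ to the vector $\underline{p}$ defined by~\eqref{eq:distribution_with_eps}, and check $\underline{p} \in \nonreg$: indeed $p_2 = 1 - \varepsilon \in [0,1)$, $p_k = \varepsilon r_k \ge 0$ for $k \ge 3$, and $\sum_{k=2}^\Delta p_k = 1 + \varepsilon \sum_{k=2}^\Delta r_k = 1$ since $\sum_{k=2}^\Delta r_k = 0$ for $\underline{r} \in \Rspace$.

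The identities $\Psi \circ \Phi = \mathrm{id}$ on $\nonreg$ and $\Phi \circ \Psi = \mathrm{id}$ on $\left\{(\varepsilon,\underline{r}) \mid \varepsilon \in (0,1],\ \underline{r} \in \Rspace\right\}$ are then one-line coordinatewise checks using $1 - (1 - p_2) = p_2$, $\varepsilon \cdot (p_k/\varepsilon) = p_k$, and $1 - (1-\varepsilon) = \varepsilon$; this shows $\Phi$ is a bijection. For the ``moreover'' clause I would simply observe that $\nearreg{\varepsilon_0}$ is obtained from $\nonreg$ by imposing the single extra constraint $1 - p_2 < \varepsilon_0$, i.e.\ $\varepsilon < \varepsilon_0$; intersecting with $\varepsilon \in (0,1]$ and using $\varepsilon_0 < 1$ yields exactly $\varepsilon \in (0,\varepsilon_0)$, so the restriction of $\Phi$ is the claimed bijection onto $\left\{(\varepsilon,\underline{r}) \mid \varepsilon \in (0,\varepsilon_0),\ \underline{r} \in \Rspace\right\}$.

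There is essentially no obstacle here; the only point requiring slight care is the endpoint $\varepsilon = 1$, which is permitted in $\nonreg$ (it corresponds to $p_2 = 0$) and matches the half-open interval $(0,1]$, whereas in $\nearreg{\varepsilon_0}$ with $\varepsilon_0 < 1$ the value $\varepsilon = 1$ never occurs — which is why the target intervals are $(0,1]$ and $(0,\varepsilon_0)$ respectively.
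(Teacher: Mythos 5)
Your proof is correct and matches the paper's approach exactly; the paper simply states that Definition~\ref{def:deg_distr_representation} ``immediately implies'' the claim, and your explicit verification of well-definedness and mutual inverseness is precisely the routine check being alluded to.
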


Note that if $\varepsilon=0$, then for any $\underline{r} \in \Rspace$, equation~\eqref{eq:distribution_with_eps} gives the purely $2$-regular distribution $\chi_2$.

\begin{notation}[Distribution given by $\varepsilon$ and $\underline{r}$]\label{not:p(eps,r)}
For any~${\varepsilon \in (0,1]}$ and ${\underline{r} \in \Rspace}$, we denote by $\pfromr$ the element of~$\nonreg$ given by~\eqref{eq:distribution_with_eps}.
\end{notation}

Recall the notion of the functions~$\Hp$ and $H$ from Notation~\ref{not:Hp}. Now we study~$H_{\underline{p}(\varepsilon,\underline{r})}$ as $\varepsilon \to 0$.

\begin{definition}[First-order Taylor approximation of $\Hp$]\label{def:H_approx}
Let $\underline{r} \in \Rspace$. We define
\begin{equation}\label{eq:H_error_def}
\Herror(t) := \lim \limits_{\varepsilon \to 0} \frac{H_{\underline{p}(\varepsilon,\underline{r})}(t) - H(t)}{\varepsilon}
\quad \text{and} \quad
\Herrorint(t) := \int \limits_t^\infty \Herror(s) \, \mathrm{d}s,
\end{equation}
where $\underline{p}(\varepsilon, \underline{r}) \in \nonreg$ is defined in Notation~\ref{not:p(eps,r)}, i.e., $\Herror(t)$ is the directional derivative of $H_{\underline{p}(\varepsilon, \underline{r})}(t)$ in the direction $\underline{r}$ w.r.t.~$\varepsilon$ at~${\varepsilon=0}$.
We also introduce the notation 
\begin{equation*}
\Hint(t) := \int \limits_t^\infty \Hp(s) \, \mathrm{d}s\end{equation*} for any $\underline{p} \in \Pspace$, i.e., $\Hint$ denotes the survival function of the random variable~$\tau_{\underline{p}}$ (see~\eqref{eq:tau_def}). We again simplify our notation in the $2$-regular case and use $I(t):= I_{\chi_2}(t)$. Recall the notion of the counting process~$X_{\underline{p}}(t)$ from Notation~\ref{not:Hp}. Note that we have
\begin{equation}\label{eq:I(t)_explicit}
I(t) = \pr ( \tau_{\chi_2} \ge t) \stackrel{\eqref{eq:tau_def}}{=} \pr (X_{\chi_2}(t)=0) = \mathrm{e}^{-\lambda(t)},
\quad \text{therefore,}\quad
H(t) = -I'(t)= \lambda'(t) \cdot \mathrm{e}^{-\lambda(t)}.
\end{equation}

Furthermore, let $\varepsilon \in (0,1]$ and consider the distribution $\underline{p} = \pfromr \in \nonreg$ defined in Notation~\ref{not:p(eps,r)}. Then, for any $\delta \in (0,1)$, we define
\begin{equation}\label{eq:H_int_under_over_def}
\Hintunder{\delta}(t) := I(t) + (1-\delta)\varepsilon \cdot \Herrorint(t),
\qquad
\Hintover{\delta}(t) := I(t) + (1+\delta)\varepsilon \cdot \Herrorint(t).
\end{equation}
\end{definition}

Note that by the definition of~$\Herror$ (see~\eqref{eq:H_error_def}), the first-order Taylor approximation (in terms of $\varepsilon$) of~$\Hp(t)$ is ${H(t) + \varepsilon \cdot \Herror(t)}$. However, by the fact that $\Hp$ and $H$ are both probability density functions and by~\eqref{eq:H_error_def}, we see that
\begin{equation}\label{eq:H_error_int_at_0}
\Herrorint(0) = \int \limits_0^\infty \Herror(s) \, \mathrm{d}s=0,
\end{equation}
which also implies that the sign of~$\Herror(t)$ changes. On the other hand, we will show that $\Herrorint(t)$ is non-negative for any ${t \in \mathbb{R}_+}$ (see Claim~\ref{cl:H_error_explicit}, where we give an explicit formula for $\Herrorint(t)$). This property will be beneficial for us, thus, instead of approximating $\Hp$, we will use the approximation ${\Hint(t) \approx I(t) + \varepsilon \cdot \Herrorint(t)}$, i.e., the approximation of the survival function of~$\tau_{\underline{p}}$ (see~\eqref{eq:tau_def}). Namely, we will show in Lemma~\ref{lem:H_int_squeeze} that~$\Hintunder{\delta}(t)$ and~$\Hintover{\delta}(t)$ squeeze~$\Hint(t)$ for any~${t \in [0,\toverone]}$ (where~$\toverone$ was defined in~\eqref{eq:tc_approx}). This result will imply Lemma~\ref{lem:stoch_dom} that will in turn be an ingredient in the proof of Lemma~\ref{lem:eigenfunction_bounds}.

Recall that we say that the random variable $Y$ stochastically dominates the random variable $X$ (or briefly denote $X \preccurlyeq Y$) if $\mathbb{P}(X \geq t) \leq \mathbb{P}(Y \geq t)$ holds for all $t \in \mathbb{R}$. In this case $\mathbb{E}(f(X)) \leq \mathbb{E}(f(Y))$ also holds for any monotone increasing function $f: \mathbb{R} \to \mathbb{R}$.

We now stochastically dominate $\tau_{\underline{p}}$ from above and from below with a pair of auxiliary random variables that are easier to work with than $\tau_{\underline{p}}$. 

\begin{lemma}[Stochastic dominance for $\tau_{\underline{p}}$]\label{lem:stoch_dom}
For any $\delta \in (0,1)$, there exists $\varepsilon_0>0$ such that for any ${\underline{p} \in \nearreg{\varepsilon_0}}$, we can define random variables $\tauunder{\delta}$ and $\tauover{\delta}$ such that for any $t \in [0,\toverone]$, we have
\begin{equation}\label{eq:tau_pm_def}
\pr \left( \tauunder{\delta} \ge t \right) =\Hintunder{\delta}(t),
\qquad
\pr \left( \tauover{\delta} \ge t \right) =\Hintover{\delta}(t)
\end{equation}
(cf.\ \eqref{eq:H_int_under_over_def}),
and they satisfy the stochastic dominance relations
\begin{equation}\label{eq:stoch_dom}
\tauunder{\delta} \preccurlyeq \tau_{\underline{p}} \preccurlyeq \tauover{\delta}.
\end{equation}
\end{lemma}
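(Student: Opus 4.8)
The plan is to build $\tauunder{\delta}$ and $\tauover{\delta}$ directly from the candidate survival functions $\Hintunder{\delta}$ and $\Hintover{\delta}$ defined in~\eqref{eq:H_int_under_over_def}, and to check that these functions really are valid (non-increasing, $[0,1]$-valued) survival functions on the relevant range, pinching $\Hint$ from both sides. First I would prove the key analytic input, which I will call Lemma~\ref{lem:H_int_squeeze}: for every $\delta \in (0,1)$ there is $\varepsilon_0 > 0$ such that whenever $\underline{p} = \pfromr$ with $\varepsilon < \varepsilon_0$, one has $\Hintunder{\delta}(t) \le \Hint(t) \le \Hintover{\delta}(t)$ for all $t \in [0,\toverone]$. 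Granting the (separately established) facts that $\Herrorint(t) \ge 0$ for all $t$ and that $\Herrorint$ is bounded with $\Herrorint(0) = 0$ (from~\eqref{eq:H_error_int_at_0}), the squeeze reduces to showing that the second-order Taylor remainder of $\varepsilon \mapsto \Hint(t)$ at $\varepsilon = 0$ is, uniformly in $t \in [0,\toverone]$ and in $\underline{r} \in \Rspace$, of smaller order than $\delta \varepsilon \Herrorint(t)$; since $\toverone = 1/\sum_{k\ge 3} k(k-2)p_k$ blows up like $1/\varepsilon$, I would track the joint $(\varepsilon, t)$-dependence carefully, using the explicit formula for $\Herrorint$ to see that $\Herrorint(t)$ itself grows (it does not vanish as $t \to \infty$, unlike $I(t) = \mathrm e^{-\lambda(t)}$), so the remainder is genuinely lower-order on the whole interval.

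Once the squeeze is in hand, I would construct the auxiliary variables as follows. Since $\Hintover{\delta}(t) = I(t) + (1+\delta)\varepsilon \Herrorint(t)$ with $I$ a genuine survival function, $\Herrorint \ge 0$, $\Herrorint(0) = 0$, and $\Herrorint$ continuous, the function $\Hintover{\delta}$ starts at $1$ at $t = 0$; I would verify it is non-increasing on $[0,\toverone]$ (this again needs $\varepsilon_0$ small, comparing $I'(t) = -H(t)$ against $\varepsilon \Herror(t)$) and then define $\tauover{\delta}$ to have survival function equal to $\Hintover{\delta}$ on $[0,\toverone]$ and to be extended arbitrarily beyond $\toverone$ (e.g.\ keep it constant there, or let its conditional law past $\toverone$ agree with that of $\tau_{\underline p}$) in a way that keeps it a bona fide random variable. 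I would do the symmetric thing for $\tauunder{\delta}$ using $\Hintunder{\delta}$; here monotonicity is automatic since we are subtracting a non-negative increasing-from-zero quantity scaled by $(1-\delta) < 1$, but I still need $\Hintunder{\delta}(t) \ge 0$, which follows from the squeeze $\Hintunder{\delta}(t) \le \Hint(t)$ combined with $\Hintunder{\delta}$ being close to $\Hint \ge 0$ — more directly, $\Hintunder{\delta}(t) = \Hint(t) - [\,\Hint(t) - \Hintunder{\delta}(t)\,] \ge 0$ once the bracket is controlled, or simply $\Hintunder{\delta}(t) \ge I(t) - (1-\delta)\varepsilon\cdot\|\Herrorint\|_\infty$ reasoning on a compact-then-tail split. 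The stochastic dominance~\eqref{eq:stoch_dom} is then immediate from~\eqref{eq:tau_pm_def} and the pointwise squeeze on $[0,\toverone]$, because for $t > \toverone$ both $\tau_{\underline p}$ and the auxiliary variables have negligible (and appropriately ordered) tails by the extension choice.

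The main obstacle I anticipate is the \emph{uniformity} in the Taylor expansion over the growing interval $[0,\toverone]$: a naive Taylor bound with an $\varepsilon^2$ remainder would be useless because $\toverone \asymp 1/\varepsilon$, so the remainder must be shown to be $o(\delta\varepsilon\,\Herrorint(t))$ with all constants independent of $t \in [0,\toverone]$ and of $\underline r \in \Rspace$. Resolving this hinges on an explicit enough handle on $H_{\underline p(\varepsilon,\underline r)}$ — via the representation $\Hint(t) = \ev[\indic{X_{\underline p}(t) \le D_{\underline p} - 1}]$ with $X_{\underline p}(t) \sim \mathrm{POI}(\lamp(t))$ and the ODE~\eqref{eq:lambda_p_ode_with_q} for $\lamp$ — so that one can differentiate twice in $\varepsilon$ under the expectation and bound the second derivative uniformly. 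I would therefore first record an explicit formula for $\Herrorint$ (the promised Claim~\ref{cl:H_error_explicit}), which also supplies the needed lower bound on its growth, and only then push through the remainder estimate; everything after that is bookkeeping to package the pinched survival functions as genuine random variables.
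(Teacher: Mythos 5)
Your proposal matches the paper's high-level structure: you correctly identify the squeeze $\Hintunder{\delta}(t) \le \Hint(t) \le \Hintover{\delta}(t)$ on $[0,\toverone]$ as the key sub-lemma, you recognize that the non-negativity of $\Herrorint$ and the initial condition $\Herrorint(0)=0$ are what make $\Hintunder{\delta}$ and $\Hintover{\delta}$ viable survival-function candidates, and your extension of the auxiliary variables past $\toverone$ (an atom at $\toverone$ for one of them, matching the tail of $\tau_{\underline p}$ for the other) is essentially what the paper does. However, your proposed mechanism for proving the squeeze differs substantially from the paper's, and your sketch leaves that mechanism incomplete. You propose to Taylor-expand $\varepsilon \mapsto \Hint(t)$ directly and bound the first-order remainder by $\delta\varepsilon\Herrorint(t)$ uniformly over $t \in [0,\toverone]$ and $\underline r \in \Rspace$. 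The paper instead factors $\Hint(t) = \Hintlam(\lamp(t))$, first proves a squeeze for $\lamp$ itself (Lemma~\ref{lem:lambda_p_squeeze}), and then uses monotonicity, convexity and a derivative lower bound for the explicit function $\Hintlam$ (Claim~\ref{cl:H_int_lam_prop}) to transfer the $\lamp$-squeeze to $\Hint$. The advantage of the paper's factored route is that the critical factor $\mathrm e^{-\lambda(t)}$ enters automatically through $\Hintlam$, so the uniformity in $t$ over the growing interval — exactly the point you flag as the main obstacle — is controlled structurally rather than by chasing the second $\varepsilon$-derivative of a composite function. Your route could in principle be made to work, but as written it is a plan, not a proof: the remainder estimate is precisely the hard part and is left unexecuted.

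There are also two concrete errors in your write-up. First, you assert that $\Herrorint(t)$ ``does not vanish as $t\to\infty$''; in fact $\Herrorint(t) \to 0$ (it is $\mathcal O((\ln t)^{\Delta-3}/t)$ by Claim~\ref{cl:asymp}(d)); what is true and what you need is that it decays \emph{slower} than $I(t) = \mathrm e^{-\lambda(t)} \sim 1/(t\ln t)$, not that it fails to vanish. Second, your claim that monotonicity of $\Hintunder{\delta}$ is ``automatic since we are subtracting a non-negative increasing-from-zero quantity'' is wrong on both counts: $\Hintunder{\delta}(t) = I(t) + (1-\delta)\varepsilon\Herrorint(t)$ is obtained by \emph{adding} a non-negative term, and $\Herrorint$ is not monotone (it starts at $0$, rises, and decays back to $0$), so adding it to the decreasing $I$ does not trivially preserve monotonicity. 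You need the same derivative computation for $\Hintunder{\delta}$ as for $\Hintover{\delta}$, which is what the paper does in~\eqref{eq:H_int_bounds_derivatives}. Finally, your detour for $\Hintunder{\delta}(t) \ge 0$ is unnecessary: since $\Herrorint \ge 0$ and $I > 0$, positivity is immediate from the defining formula.
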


We prove Lemma~\ref{lem:stoch_dom} in Section~\ref{sss:stoch_dom}.

Now, we can turn to the explicit definitions of the functions $\wunder(t)$ and $\wover(t)$ that appeared in Lemma~\ref{lem:eigenfunction_bounds}.
Recall that the principal eigenvalue of $\Toperator{\tc}$ is $\norm{\Toperator{\tc}}=1$ (see statements \ref{item:op_norm_1_at_tc} and \ref{item:princip_eigenvalue} of Theorem~\ref{thm:T_operator_prop}). Therefore, by \cite[Lemma~4.17]{RSW} we have the following result.

\begin{proposition}[ODE of the principal eigenfunction]\label{prop:principal_eigenfn_ODE}
Let $\underline{p} \in \Pspace$. Let us define the function~$w_{\underline{p}}(t)$ as follows. On the interval $[0,\tc)$ it is the solution of the second-order linear initial value problem
\begin{equation}\label{eq:w_p_ivp}
w''_{\underline{p}}(t) = -\Hp(t)w_{\underline{p}}(t), \quad w_{\underline{p}}(0) =0, \quad w'_{\underline{p}}(0) =1,
\end{equation}
and if $t \ge \tc$ then we define $w_{\underline{p}}(t):=w_{\underline{p}}(\tc)$. Then $w_{\underline{p}}$ is the unique principal eigenfunction of~$\Toperator{\tc}$ (corresponding to the principal eigenvalue $1$), normalized in a way that $w'_{\underline{p}}(0) = 1$. In particular, $w_{\underline{p}}(t) \geq 0$ for all $t \in \mathbb{R}_+$.
\end{proposition}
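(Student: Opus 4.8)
The plan is to prove Proposition~\ref{prop:principal_eigenfn_ODE} by connecting the eigenfunction equation $\Toperator{\tc} w = w$ to the stated ODE, and then establishing uniqueness and non-negativity. First I would unwind the definition of $\Toperator{\hat{t}}$ at $\hat{t} = \tc$: by~\eqref{eq:T_operator_def}, $(\Toperator{\tc} w)(t) = \int_0^\infty \Hp(s) w(s) (\tc \wedge t \wedge s)\, \mathrm{d}s$. For $t \le \tc$ the first factor in the minimum is inactive, so $(\Toperator{\tc} w)(t) = \int_0^\infty \Hp(s) w(s) (t \wedge s)\, \mathrm{d}s = \int_0^t s\, \Hp(s) w(s)\, \mathrm{d}s + t \int_t^\infty \Hp(s) w(s)\, \mathrm{d}s$. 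This is a standard Green's-function representation: differentiating twice in $t$ (under the integral, which is justified since $\Hp$ is a bounded density and $w$ is continuous) kills the boundary terms and yields exactly $\frac{\mathrm{d}^2}{\mathrm{d}t^2}(\Toperator{\tc} w)(t) = -\Hp(t) w(t)$, while evaluating at $t = 0$ gives $(\Toperator{\tc} w)(0) = 0$ and the first derivative at $0$ gives $\int_0^\infty \Hp(s) w(s)\, \mathrm{d}s$. Hence if $w$ is an eigenfunction with eigenvalue $1$, then $w(t) = (\Toperator{\tc} w)(t)$ on $[0, \tc]$ satisfies $w'' = -\Hp w$ with $w(0) = 0$; normalizing so that $w'(0) = 1$ (possible since the principal eigenfunction is positive by statement~\ref{item:princip_eigenvalue} of Theorem~\ref{thm:T_operator_prop}, and a positive function with $w(0)=0$ must have $w'(0) > 0$, so we rescale) pins down~\eqref{eq:w_p_ivp}. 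For $t \ge \tc$, since $\tc \wedge t \wedge s = \tc \wedge s$ is independent of $t$, the function $(\Toperator{\tc} w)(t)$ is constant in $t$ for $t \ge \tc$, which forces $w(t) = w(\tc)$ there, matching the stated definition.

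Conversely, I would verify that the function $w_{\underline{p}}$ \emph{defined} by the IVP on $[0,\tc)$ and extended constantly does satisfy $\Toperator{\tc} w_{\underline{p}} = w_{\underline{p}}$: one integrates the ODE twice and uses the boundary/matching conditions at $\tc$ to recover the integral representation above, so the Green's-function computation runs in reverse. The key point making this work is continuity of $w_{\underline{p}}$ and $w_{\underline{p}}'$ at $\tc$ — the ODE solution on $[0,\tc)$ must have $w_{\underline{p}}'(\tc^-) = 0$ for the constant extension to be $C^1$ and to genuinely be a fixed point of $\Toperator{\tc}$; this in turn is equivalent to the normalization constant matching, i.e., to $\norm{\Toperator{\tc}} = 1$, which is statement~\ref{item:op_norm_1_at_tc} of Theorem~\ref{thm:T_operator_prop}. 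I should cite~\cite[Lemma~4.17]{RSW} as the precise reference where this correspondence (and the fact that $w_{\underline{p}}'(\tc^-)=0$) is established, since the excerpt explicitly invokes it.

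For the remaining assertions: uniqueness of the principal eigenfunction (up to scalar) is statement~\ref{item:princip_eigenvalue} of Theorem~\ref{thm:T_operator_prop}, so once we know $w_{\underline{p}}$ is \emph{a} principal eigenfunction, it is \emph{the} one under the normalization $w_{\underline{p}}'(0) = 1$. Non-negativity $w_{\underline{p}}(t) \ge 0$ then follows from positivity of the principal eigenfunction (again statement~\ref{item:princip_eigenvalue}) on $(0,\tc)$ together with $w_{\underline{p}}(0) = 0$ and the constant extension for $t \ge \tc$; alternatively one can argue directly from the ODE that the first zero of $w_{\underline{p}}$ after $0$ cannot occur before $\tc$ (a Sturm-type comparison, since $w_{\underline{p}}'(\tc^-) = 0$ and $w_{\underline{p}}$ starts increasing). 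The main obstacle is not any single hard estimate but rather being careful about the two-sided equivalence at the matching point $t = \tc$: one must check that the eigenfunction equation, which is a global integral identity, is equivalent to the local IVP \emph{plus} the derivative-vanishing condition $w_{\underline{p}}'(\tc^-) = 0$, and that the latter is exactly what $\norm{\Toperator{\tc}} = 1$ buys us. Since~\cite[Lemma~4.17]{RSW} packages precisely this, the cleanest route is to invoke it and spell out only the Green's-function differentiation that translates between the two formulations.
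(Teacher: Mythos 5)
Your overall route matches the paper's: the paper's entire proof is a one-line citation of \cite[Lemma~4.17]{RSW} together with statements~\ref{item:op_norm_1_at_tc} and~\ref{item:princip_eigenvalue} of Theorem~\ref{thm:T_operator_prop}, and you correctly identify \cite[Lemma~4.17]{RSW} as the crucial input and the Green's-function differentiation of \eqref{eq:T_operator_def} as the mechanism behind it. The unpacking of $(\Toperator{\tc}\varphi)(t) = \int_0^t s\Hp(s)\varphi(s)\,\mathrm{d}s + t\int_t^\infty \Hp(s)\varphi(s)\,\mathrm{d}s$ for $t\le\tc$, the second-derivative computation, the normalization argument via $w'(0) = \int_0^\infty \Hp w$, and the use of statement~\ref{item:princip_eigenvalue} for uniqueness and non-negativity are all sound.

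However, your claimed ``key point'' — that the ODE solution must satisfy $w_{\underline{p}}'(\tc^-)=0$ so the constant extension is $C^1$, and that this is what $\norm{\Toperator{\tc}}=1$ delivers — is wrong when $\tc<\infty$ (the generic case for $\underline{p}\in\nonreg$). The correct matching condition is different. Writing $v$ for the IVP solution and using $\Hp v = -v''$ on $[0,\tc)$, one computes for $t<\tc$ that $(\Toperator{\tc}v)(t) = v(t) + t\bigl(v(\tc)\Hint(\tc) - v'(\tc^-)\bigr)$, so the fixed-point equation on $[0,\tc]$ (and then automatically on $[\tc,\infty)$) is equivalent to $v'(\tc^-) = v(\tc)\cdot \Hint(\tc)$, which is strictly \emph{positive} since $v(\tc)>0$ and $\Hint(\tc)=\pr(\tau_{\underline{p}}\ge\tc)>0$. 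Consequently the left derivative at $\tc$ is strictly positive while the right derivative is $0$, so $w_{\underline{p}}$ is \emph{not} $C^1$ at $\tc$ — and indeed nothing in the proposition or in \eqref{eq:T_operator_def} requires it to be, because $\Toperator{\tc}w$ itself has this kink. (Your intuition $w'(\tc^-)=0$ is correct only in the degenerate $\underline{p}=\chi_2$ case where $\tc=+\infty$.) This mis-identification doesn't break your overall argument since you defer the precise correspondence to \cite[Lemma~4.17]{RSW}, but the sentence attributing $w_{\underline{p}}'(\tc^-)=0$ to that lemma should be corrected to the matching condition $w_{\underline{p}}'(\tc^-)=w_{\underline{p}}(\tc)\Hint(\tc)$, and the requirement of $C^1$-ness at $\tc$ should be dropped.
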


\begin{claim}[Principal eigenfunction in the $2$-regular case]\label{cl:2reg_eigenfunction}
In the $2$-regular case, we have $w_{\chi_2}(t) = \lambda(t)$ for all $t \geq 0$, i.e., the principal eigenfunction of $T_{\chi_2, \hat{t}_c(\chi_2)}$ is the solution of the initial value problem \eqref{eq:lambda_2reg_ode}.
\end{claim}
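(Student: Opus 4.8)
The claim is that $w_{\chi_2}(t) = \lambda(t)$, where $\lambda$ solves \eqref{eq:lambda_2reg_ode}. By Proposition~\ref{prop:principal_eigenfn_ODE}, $w_{\chi_2}$ is the unique solution of the IVP \eqref{eq:w_p_ivp} on $[0, \hat{t}_c(\chi_2))$, namely $w''_{\chi_2}(t) = -H(t) w_{\chi_2}(t)$ with $w_{\chi_2}(0)=0$, $w'_{\chi_2}(0)=1$. Since $\hat{t}_c(\chi_2) = \infty$ (see Remark~\ref{rem:2reg_hattc_infty}), this IVP governs $w_{\chi_2}$ on all of $\mathbb{R}_+$, so it suffices to check that $\lambda(t)$ satisfies this same second-order IVP. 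Uniqueness of solutions to the linear IVP then forces $w_{\chi_2} = \lambda$ on $\mathbb{R}_+$.

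\textbf{Key steps.} First I would record that $\lambda(0) = 0$ directly from \eqref{eq:lambda_2reg_ode}, matching the first initial condition. Second, I would compute $\lambda'(0)$: plugging $t=0$ (hence $\lambda(0)=0$) into \eqref{eq:lambda_2reg_ode} gives $\lambda'(0) = \mathrm{e}^{0}\cdot(1+0) = 1$, matching the second initial condition. Third — the substantive step — I would verify the ODE $\lambda'' = -H\lambda$. Differentiate \eqref{eq:lambda_2reg_ode}: since $\lambda'(t) = \mathrm{e}^{-\lambda(t)}(1+\lambda(t))$, the product/chain rule gives
\begin{equation*}
\lambda''(t) = -\lambda'(t)\,\mathrm{e}^{-\lambda(t)}(1+\lambda(t)) + \mathrm{e}^{-\lambda(t)}\lambda'(t) = \lambda'(t)\,\mathrm{e}^{-\lambda(t)}\big(1 - (1+\lambda(t))\big) = -\lambda'(t)\,\mathrm{e}^{-\lambda(t)}\lambda(t).
\end{equation*}
Now invoke the explicit formula $H(t) = \lambda'(t)\,\mathrm{e}^{-\lambda(t)}$ from \eqref{eq:I(t)_explicit}, which yields $\lambda''(t) = -H(t)\,\lambda(t)$, exactly \eqref{eq:w_p_ivp} in the $\chi_2$ case. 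Thus $\lambda$ solves the same IVP as $w_{\chi_2}$ on $[0,\hat{t}_c(\chi_2)) = \mathbb{R}_+$.

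\textbf{Concluding.} By the Picard--Lindelöf uniqueness theorem for the linear second-order ODE $y'' = -H(t)\,y$ with continuous coefficient $H$ (note $H$ is a probability density and smooth here), the solution with prescribed $y(0), y'(0)$ is unique; hence $w_{\chi_2}(t) = \lambda(t)$ for all $t \in [0,\hat{t}_c(\chi_2)) = \mathbb{R}_+$. Since $\hat{t}_c(\chi_2) = \infty$, there is no separate ``$t \ge \hat{t}_c$'' regime to handle, and the identity holds for all $t \ge 0$. The main (and only mild) obstacle is simply being careful that the ODE \eqref{eq:w_p_ivp} is the governing equation on the full half-line in the $\chi_2$ case, which is exactly why invoking $\hat{t}_c(\chi_2) = \infty$ from Remark~\ref{rem:2reg_hattc_infty} is the crucial bookkeeping point; everything else is a one-line differentiation.
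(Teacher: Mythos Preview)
Your proof is correct and follows essentially the same approach as the paper: invoke $\hat t_c(\chi_2)=\infty$, differentiate \eqref{eq:lambda_2reg_ode} and use \eqref{eq:I(t)_explicit} to verify $\lambda''=-H\lambda$, check the initial conditions, and conclude by uniqueness for the IVP \eqref{eq:w_p_ivp}. The only difference is that you spell out the differentiation and the uniqueness argument in more detail than the paper does.
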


\begin{proof}
We know that $\hat{t}_c(\chi_2)=+\infty$ (see Remark~\ref{rem:2reg_hattc_infty}). Differentiating the ODE \eqref{eq:lambda_2reg_ode} of $\lambda(t)$ and using the formula \eqref{eq:I(t)_explicit} for $H(t)$ we obtain $\lambda''(t)=-H(t)\lambda(t)$. The identities $\lambda(0)=0$ and $\lambda'(0)=1$ also follow from~\eqref{eq:lambda_2reg_ode}. Thus $w_{\chi_2}(t) = \lambda(t)$ by \eqref{eq:w_p_ivp}.
\end{proof}

We now introduce the directional derivative $\werror(t):=\lim \limits_{\varepsilon \to 0} \frac{w_{\underline{p}(\varepsilon,\underline{r})}(t) - \lambda(t)}{\varepsilon}$ and construct the test functions~$\wunder$ and~$\wover$ that we promised in the statement of Lemma~\ref{lem:eigenfunction_bounds}.
Recall $\tunder$ and $\tover$ from~\eqref{eq:tc_approx}.

\begin{definition}[Approximations of the principal eigenfunction]\label{def:w_approx}
Given some $\underline{r} \in \Rspace$, we define the function~$\werror(t)$ to be the solution of the following initial value problem:
\begin{equation}\label{eq:w_error_ivp}
\werror''(t) = -H(t)\cdot \werror(t)-\Herror(t) \cdot \lambda(t), \quad\werror(0) = 0, \quad \werror'(0) = 0.
\end{equation}

Let $\varepsilon \in (0,1]$ and consider the probability distribution $\underline{p}=\pfromr \in \nonreg$ defined in Notation~\ref{not:p(eps,r)}. For any $\delta \in (0, 1)$, we define the functions $\wunder(t)$ and $\wover(t)$ by
\begin{equation}\label{eq:w_approx}
\wunder(t):= \left\{
\begin{array}{@{}ll@{}}
\lambda(t) +\left(1+\frac{\delta}{2}\right)\varepsilon\werror(t) \quad &\text{if } t \le \tunder,\\
\wunder(\tunder) \quad &\text{if } t > \tunder,
\end{array}\right.
\quad \text{and} \quad
\wover(t):= \left\{
\begin{array}{@{}ll} \lambda(t) + \left(1-\frac{\delta}{2}\right)\varepsilon\werror(t) \quad &\text{if } t \le \tover,\\
\wover(\tover) \quad &\text{if } t > \tover.
\end{array}\right.
\end{equation}
\end{definition}

Note that bijection of Claim~\ref{cl:bijection} implies that~$\wunder(t)$ and~$\wover(t)$ are defined for any $\underline{p} \in \nonreg$.

The second-order linear ODE~\eqref{eq:w_error_ivp} can be solved explicitly, cf.\ Claim~\ref{cl:w_error_explicit}.

We will show that the test functions~$\wunder$, $\wover$ defined in~\eqref{eq:w_approx} satisfy the requirements of Lemma~\ref{lem:eigenfunction_bounds}. Recall Claim~\ref{cl:2reg_eigenfunction} and note that the initial value problem~\eqref{eq:w_error_ivp} is obtained taking the directional derivative of~\eqref{eq:w_p_ivp} w.r.t.~$\varepsilon$ in the direction $\underline{r}$ and substituting ${\varepsilon=0}$. Therefore, the first-order Taylor approximation (in terms of~$\varepsilon$) of $w_{\underline{p}}(t)$ on the interval~${(0,\tc)}$ is ${\lambda(t) +\varepsilon \cdot \werror(t)}$. We use this approximation (with an extra~${1\pm \frac{\delta}{2}}$) to define~$\wunder(t)$ for~${t \le \tunder}$ and~$\wover(t)$ for~${t \le \tover}$. For larger values of~$t$, the functions~$\wunder(t)$ and~$\wover(t)$ are defined to be constants, since $(\Toperator{\hat{t}}\varphi)(t)$ is also constant for~${t \ge \hat{t}}$ for any~${\varphi:\mathbb{R}_+ \to \mathbb{R}}$ (cf.~\eqref{eq:T_operator_def}).

\begin{proposition}[Properties of the approximations of the eigenfunction]\label{prop:w_approx_prop}
Let $\delta \in (0, 1)$ be arbitrary. Then
\begin{enumerate}[label={\upshape(\alph*)}]
\item for any $\underline{p} \in \nonreg$, we have $\wunder(0)=0$, $\wover(0)=0$, and
\item there exists $\varepsilon_0>0$ such that for any $\underline{p} \in \nearreg{\varepsilon_0}$, the functions $\wunder(t)$ and $\wover(t)$ are monotone increasing on~$\mathbb{R}_+$.
\end{enumerate}
\end{proposition}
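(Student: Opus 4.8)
The plan is to prove part (a) directly from the definitions and to prove part (b) via a quantitative perturbative estimate. Part (a) is immediate: by Claim~\ref{cl:2reg_eigenfunction} we have $\lambda(0) = 0$, and by the initial conditions in~\eqref{eq:w_error_ivp} we have $\werror(0) = 0$. Hence, reading off the top branch of~\eqref{eq:w_approx}, both $\wunder(0) = \lambda(0) + (1+\tfrac{\delta}{2})\varepsilon\werror(0) = 0$ and $\wover(0) = \lambda(0) + (1-\tfrac{\delta}{2})\varepsilon\werror(0) = 0$, regardless of which of the two cases in the piecewise definition applies at $t=0$ (it is the $t\le\tunder$, resp.\ $t\le\tover$, branch, since these thresholds are positive).

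For part (b), the key point is that $\wunder$ and $\wover$ agree with $\lambda$ up to an $O(\varepsilon)$ correction on the interval where they are non-constant, and are constant afterwards; so monotonicity reduces to showing that the derivative of $\lambda(t) + c\varepsilon\werror(t)$ stays positive on $[0,\tunder]$ (resp.\ $[0,\tover]$), where $c = 1\pm\tfrac{\delta}{2}$. First I would record that $\lambda'(t) = \mathrm{e}^{-\lambda(t)}(1+\lambda(t)) > 0$ for all $t$ by~\eqref{eq:lambda_2reg_ode}, and moreover that $\lambda'$ is bounded below by a positive constant on any bounded interval $[0,\toverone]$ — indeed $\lambda$ is increasing, so $\lambda(t)\le\lambda(\toverone)$ there and hence $\lambda'(t)\ge \mathrm{e}^{-\lambda(\toverone)}(1+0) > 0$. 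The crucial input is then a uniform bound on $|\werror'(t)|$ over $t\in[0,\toverone]$ and over $\underline{r}\in\Rspace$: since $\werror$ solves the linear ODE~\eqref{eq:w_error_ivp} with bounded forcing term (the functions $H$, $\lambda$, and $\Herror$ are all bounded on $[0,\toverone]$, the last of these because it is an explicit expression — see the forthcoming Claim~\ref{cl:H_error_explicit} — and the coefficients $r_k$ are bounded by~$1$), a Grönwall-type estimate gives $\sup_{t\in[0,\toverone]}|\werror'(t)| \le C$ for a constant $C = C(\delta,\Delta)$ depending only on $\delta$ and $\Delta$ (note $\toverone$ depends on $\underline{p}$ only through $\sum k(k-2)p_k$, which is pinned once $\delta=1$; to get a uniform interval one can instead use a fixed bounded interval containing all the relevant $\toverone$ for $\underline{p}\in\nearreg{\varepsilon_0}$, e.g.\ by noting $\toverone \le 2/(\text{min positive value of }\sum k(k-2)r_k \cdot \varepsilon)$ — this needs a little care, see below).

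Granting such a constant $C$, I would then simply choose $\varepsilon_0 > 0$ small enough that $(1+\tfrac{\delta}{2})\varepsilon_0 C < \tfrac12 \mathrm{e}^{-\lambda(\toverone)}$; then for any $\underline{p}\in\nearreg{\varepsilon_0}$ and any $t$ in the non-constant range,
\[
\frac{\mathrm{d}}{\mathrm{d}t}\Bigl(\lambda(t) + (1\pm\tfrac{\delta}{2})\varepsilon\werror(t)\Bigr) \;\ge\; \lambda'(t) - (1+\tfrac{\delta}{2})\varepsilon\,|\werror'(t)| \;\ge\; \mathrm{e}^{-\lambda(\toverone)} - \tfrac12\mathrm{e}^{-\lambda(\toverone)} \;>\; 0,
\]
so $\wunder$ and $\wover$ are strictly increasing on $[0,\tunder]$ and $[0,\tover]$ respectively; on the complementary rays they are constant, hence (weakly) monotone increasing, and the values match continuously at the thresholds, so each is monotone increasing on all of $\mathbb{R}_+$.

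\textbf{Main obstacle.} The routine-looking but genuinely delicate step is making the bound on $|\werror'|$ truly \emph{uniform} over $\underline{r}\in\Rspace$ and over the relevant range of thresholds. Two subtleties must be handled: (i) the interval on which we need control, $[0,\tunder]$ or $[0,\tover]$, itself varies with $\underline{p}$; the cleanest fix is to observe that $\tover \le \toverone$ and that $\toverone$ can be bounded above \emph{uniformly} once we restrict to $\nearreg{\varepsilon_0}$ — wait, this is false in general since $\sum k(k-2)p_k \to 0$, so instead one works on the fixed interval $[0,\tover]$ but notes that $H$, $\lambda$, $\Herror$ and the Grönwall constant all admit bounds that grow only polynomially in $\tover$, while the smallness $\varepsilon$ that multiplies $\werror'$ beats any such growth because $\tover \asymp 1/\varepsilon$ and the ODE estimate only loses an exponential in $\tover$... — so in fact one must be slightly cleverer and exploit the decay of $H(t) = \lambda'(t)\mathrm{e}^{-\lambda(t)}$ as $t\to\infty$ (we have $\lambda(t)\gtrsim \ln t$, so $H$ and $\Herror$ decay, killing the Grönwall blow-up). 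I expect the bulk of the real work in the actual proof (deferred to later subsections via Claims~\ref{cl:H_error_explicit} and~\ref{cl:w_error_explicit}, which give \emph{explicit} formulas) is precisely this asymptotic bookkeeping, and that the explicit solvability of~\eqref{eq:w_error_ivp} is what ultimately makes the uniform bound clean rather than requiring a subtle Grönwall argument.
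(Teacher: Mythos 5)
Part (a) is correct and matches the paper.

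Part (b) has a genuine gap, and the gap is exactly the one you half-notice in your ``main obstacle'' paragraph but never actually close. Your displayed inequality chain uses a \emph{uniform} lower bound $\lambda'(t)\ge \mathrm{e}^{-\lambda(\toverone)}$ on $[0,\toverone]$ together with a \emph{uniform} upper bound $|\werror'(t)|\le C$, and asks for $\varepsilon_0$ with $(1+\tfrac{\delta}{2})\varepsilon_0 C < \tfrac12\mathrm{e}^{-\lambda(\toverone)}$. This cannot be done: $\toverone=2/(\varepsilon\sum_k k(k-2)p_k)$ grows like $1/\varepsilon$ as $\varepsilon\to 0$, and by Claim~\ref{cl:asymp}(b) one has $\mathrm{e}^{-\lambda(\toverone)}\sim 1/(\toverone\ln\toverone)\asymp \varepsilon/\ln(1/\varepsilon)$. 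So your required inequality reads, after dividing by $\varepsilon$, roughly $C\lesssim 1/\ln(1/\varepsilon)$, which fails for small $\varepsilon$ because $C$ is bounded away from $0$ (indeed $|\werror'(t)|$ is of order $1$ for $t$ of order $1$, by Claim~\ref{cl:asymp}(h)). In other words, the perturbation $\varepsilon\werror'$ is \emph{not} uniformly small compared to the worst value of $\lambda'$ on the growing interval $[0,\toverone]$ — the two quantities are both of order $\varepsilon$ up to logs, and the inequality goes the wrong way by a factor $\ln(1/\varepsilon)$.

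What the paper actually proves (Lemma~\ref{lem:monotonicity}) is a \emph{pointwise-in-$t$} comparison: $\varepsilon\,\overline{K}\,|\werror'(t)|\le\lambda'(t)$ for all $t\in[0,\toverone]$, and this is established by splitting $\werror'(t)=\werrorfirst(t)\lambda'(t)+\werrorsecond(t)u'(t)$ (Claim~\ref{cl:w_error_explicit}) and controlling the \emph{ratio} of each piece to $\lambda'(t)$ at each $t$. The first piece has ratio $\varepsilon|\werrorfirst(t)|$, which is $\varepsilon\cdot\mathrm{polylog}(1/\varepsilon)\to 0$ by \eqref{eq:lam_grows_logarithmic} and \eqref{eq:lambda_universal_bounds}; the second piece has ratio $\varepsilon|\werrorsecond(t)u'(t)|/\lambda'(t)\lesssim\varepsilon\, t/\ln t$, which at the worst point $t=\toverone$ is $\lesssim 1/\ln\toverone\to 0$. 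The essential observation is that both $\lambda'(t)$ and $\werror'(t)$ decay as $t\to\infty$ (like $1/t$ and $1/\ln t$ respectively), and it is the ratio — not the minimum of $\lambda'$ against the maximum of $|\werror'|$ — that one must control; freezing $\lambda'(t)$ at its smallest value throws away exactly the decay that makes the estimate close. Your closing remark about the decay of $H$ ``killing the Grönwall blow-up'' points vaguely in this direction, but you never reformulate the goal as a pointwise ratio bound, so the argument as written does not go through. To repair it, you would need to prove (or cite) something equivalent to Lemma~\ref{lem:monotonicity} rather than a uniform $\sup$-bound on $|\werror'|$.
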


We prove Proposition~\ref{prop:w_approx_prop} in Section~\ref{sss:asymp_prop}.
Finally, to prove Lemma~\ref{lem:eigenfunction_bounds}, we state two inequalities.

\begin{lemma}[Inequalities for the approximations of the eigenfunction]\label{lem:sufficient_inequalities}
For any $\delta \in (0, 1)$, there exists~${\varepsilon_0>0}$ such that for any $\underline{p} \in \nearreg{\varepsilon_0}$, the functions $\wunder(t)$, $\wover(t)$ (cf.~\eqref{eq:w_approx}) and the random variables~$\tauunder{\delta/3}$, $\tauover{\delta/3}$ (cf.~\eqref{eq:tau_pm_def}) satisfy the inequalities
\begin{align}
\ev \left( \wunder(\tauover{\delta/3}) \cdot \indic{t < \tauover{\delta/3}} \right) \le \wunder'(t) \quad \text{ for any } \quad t < \tunder, \label{eq:w_under_derivative_goal}\\
\ev \left( \wover(\tauunder{\delta/3}) \cdot \indic{t < \tauunder{\delta/3}} \right) \ge \wover'(t) \quad \text{ for any } \quad t < \tover\label{eq:w_over_derivative_goal}.
\end{align}
\end{lemma}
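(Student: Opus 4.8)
The plan is to exploit the fact that both sides of \eqref{eq:w_under_derivative_goal} and \eqref{eq:w_over_derivative_goal} are, up to the first-order Taylor corrections, equalities in the $2$-regular case, and then to track the $\varepsilon$-linear correction terms carefully. Recall from Claim~\ref{cl:2reg_eigenfunction} that $w_{\chi_2}=\lambda$, and that $\lambda$ solves \eqref{eq:lambda_2reg_ode}, i.e.\ $\lambda'(t)=\mathrm{e}^{-\lambda(t)}(1+\lambda(t))$. A direct computation using $H(s)=-I'(s)=\lambda'(s)\mathrm{e}^{-\lambda(s)}$ (see \eqref{eq:I(t)_explicit}) and integration by parts shows that $\ev(\lambda(\tau_{\chi_2})\indic{t<\tau_{\chi_2}}) = \int_t^\infty \lambda(s)H(s)\,\mathrm{d}s = \lambda'(t)$ exactly — this is just the statement $T_{\chi_2,\infty}\lambda=\lambda$ differentiated, consistent with Proposition~\ref{prop:principal_eigenfn_ODE}. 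So at $\varepsilon=0$ both inequalities in the Lemma are saturated with $\tunder,\tover$ both equal to $+\infty$. The strategy is therefore to write, for $\underline{p}=\pfromr$,
\begin{align*}
\ev\!\left(\wunder(\tauover{\delta/3})\indic{t<\tauover{\delta/3}}\right)
&= \int_t^\infty \wunder(s)\,\bigl(-\tfrac{\mathrm{d}}{\mathrm{d}s}\Hintover{\delta/3}(s)\bigr)\,\mathrm{d}s,
\end{align*}
substitute $\wunder(s)=\lambda(s)+(1+\tfrac{\delta}{2})\varepsilon\,\werror(s)$ (valid for $s\le\tunder$) and $\Hintover{\delta/3}(s)=I(s)+(1+\tfrac{\delta}{3})\varepsilon\,\Herrorint(s)$ (valid for $s\le\toverone$), expand, integrate by parts, and compare with $\wunder'(t)=\lambda'(t)+(1+\tfrac{\delta}{2})\varepsilon\,\werror'(t)$.

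The $\varepsilon^0$ terms cancel exactly by the identity above. The $\varepsilon^1$ terms produce, on the left, a combination $(1+\tfrac{\delta}{2})\int_t^\infty\werror(s)H(s)\,\mathrm{d}s + (1+\tfrac{\delta}{3})\int_t^\infty\lambda(s)\Herror(s)\,\mathrm{d}s$ (plus boundary terms at $s=\tunder$ or $s=\toverone$ coming from the truncation of $\wunder$ and from switching $\Hintover{\delta/3}$ to $I$ beyond $\toverone$), while the right-hand side contributes $(1+\tfrac{\delta}{2})\werror'(t)$. Using the defining ODE \eqref{eq:w_error_ivp}, namely $\werror''(s)=-H(s)\werror(s)-\Herror(s)\lambda(s)$, integration by parts turns $\int_t^\infty(H\werror+\Herror\lambda)(s)\,\mathrm{d}s$ into $\werror'(t)$ (modulo the behaviour of $\werror'$ at infinity, which must be controlled — here the asymptotics $\lambda(t)\ge\ln(t+1)$, $\lambda'(t)\sim 1/t$ from the Claim on integral asymptotics, together with the explicit formula for $\Herrorint$ promised in Claim~\ref{cl:H_error_explicit}, are used to show $\werror'(\infty)$ is finite and the tail integrals converge). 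After this cancellation, the surviving $\varepsilon$-order discrepancy is
\[
\varepsilon\,(1+\tfrac{\delta}{2})\werror'(t) - \varepsilon\Bigl[(1+\tfrac{\delta}{2})\werror'(t) + \bigl(\tfrac{\delta}{3}-\tfrac{\delta}{2}\bigr)\!\int_t^\infty\!\lambda(s)\Herror(s)\,\mathrm{d}s\Bigr] = \varepsilon\,\tfrac{\delta}{6}\!\int_t^\infty\!\lambda(s)\Herror(s)\,\mathrm{d}s + (\text{boundary}),
\]
wait — the sign and the precise constant will come out of the bookkeeping; the point is that the mismatch between the constant $1+\tfrac{\delta}{2}$ multiplying $\werror$ and the constant $1+\tfrac{\delta}{3}$ multiplying $\Herrorint$ is deliberately chosen so that the leading leftover term has a definite sign (favourable for the desired inequality), with magnitude $\Theta(\varepsilon)$, and it must dominate all genuinely second-order ($o(\varepsilon)$, uniformly in $t$ and in $\underline{r}\in\Rspace$) error terms. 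The latter includes: the $\varepsilon^2$ term $\werror H \cdot \varepsilon^2$-type contributions, the error in replacing $\Hp$ by its linearization $I+\varepsilon\Herrorint$ (controlled via Lemma~\ref{lem:H_int_squeeze} / Lemma~\ref{lem:stoch_dom}, which is exactly why $\tauover{\delta/3}$ with the smaller parameter $\delta/3$ is used rather than $\tauover{\delta/2}$), and the boundary terms at $t=\tunder$ and $t=\toverone$ which are exponentially small in $1/\varepsilon$ since $\lambda(\tunder)\approx\ln(1/\varepsilon)$ so $I(\tunder)=\mathrm{e}^{-\lambda(\tunder)}=\Theta(\varepsilon)$ and the truncation-induced terms carry extra decaying factors.

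The main obstacle I anticipate is the uniform-in-$\underline{r}$ control of all these error terms over the growing interval $[t,\toverone]$ with $\toverone\to\infty$ as $\varepsilon\to 0$: one needs quantitative bounds on $\werror$, $\werror'$, $\Herror$, $\Herrorint$ that are uniform over the compact set $\Rspace$ and that decay fast enough in $t$ to make the tail integrals negligible compared to the $\Theta(\varepsilon)$ favourable term — this is precisely where the explicit formulas of Claim~\ref{cl:H_error_explicit} and Claim~\ref{cl:w_error_explicit} (referenced but not yet proved in the excerpt) and the asymptotics $\lambda'(t)\sim 1/t$, $\lambda(t)\ge\ln(t+1)$ become indispensable. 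Once these a priori bounds are in hand, the proof reduces to choosing $\varepsilon_0$ small enough (uniformly in $\underline{r}$, using compactness of $\Rspace$) so that the $\Theta(\delta\varepsilon)$ gap beats the $o(\varepsilon)$ remainder for all $t<\tunder$ (resp.\ $t<\tover$); the argument for \eqref{eq:w_over_derivative_goal} is entirely symmetric, with the roles of upper/lower bounds and the signs of the $\delta$-corrections reversed (now $1-\tfrac{\delta}{2}$ against $1-\tfrac{\delta}{3}$, using $\tauunder{\delta/3}$).
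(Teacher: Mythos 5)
Your high-level plan matches the paper's: expand both sides in powers of $\varepsilon$, exploit the ODE~\eqref{eq:w_error_ivp} to cancel the zeroth-order identity $\int_t^\infty\lambda(s)H(s)\,\mathrm{d}s=\lambda'(t)$ and most of the first-order terms, and argue that the surviving $\delta$-dependent correction dominates the $\varepsilon^2$-remainder. This is indeed what the paper does via the difference function~$\diffunder$ (Notation~\ref{not:diff_under_over}), and your identification of the $\delta/6$ coefficient is correct. However, there are two genuine gaps.

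First, you assert that the leading leftover term $\varepsilon\,\tfrac{\delta}{6}\int_t^{\tunder}\lambda(s)\Herror(s)\,\mathrm{d}s$ has a definite (favourable) sign, but this is not obvious and you offer no argument. The integrand $\lambda(s)\Herror(s)$ changes sign: $\Herror$ is negative near $s=0$ (since $\Herrorint(0)=0$ and $\Herrorint>0$ for $t>0$, cf.~\eqref{eq:H_error_int_at_0} and~\eqref{eq:H_error_int}) and positive only for $s\ge\theta$ for some fixed $\theta$ (Claim~\ref{cl:Herror_positivity}). Establishing non-negativity of the integral for \emph{all} $t\in[0,\tunder]$, uniformly in $\underline{r}$, is exactly the content of Lemma~\ref{lem:diff_functions_2nd_lemma}; its proof requires the split into $t>\theta$ versus $t\le\theta$, an integration by parts for the small-$t$ regime, the monotonicity Lemma~\ref{lem:monotonicity}, and a strictly positive lower bound on $\int_\theta^{2\theta}\lambda'\mathrm{e}^{-\lambda}\lambda$. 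None of this is sketched in your proposal, and it is the technical heart of the lemma.

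Second, you dismiss the boundary terms at $s=\tunder$ (respectively $s=\toverone$) as ``exponentially small in $1/\varepsilon$''. This is wrong, and in fact badly misjudges their role. One has $\mathrm{e}^{-\lambda(\tunder)}=\Theta\bigl(\varepsilon/\ln(1/\varepsilon)\bigr)$ (not exponentially small), and the favourable integral $\int_t^{\tunder}\lambda\Herror$ shrinks to $o(\varepsilon)$ as $t\to\tunder$ (since $\Herror(s)=\mathcal{O}((\ln s)^{\Delta-3}/s^2)$ by~\eqref{eq:asymp_behavior}(c)). Thus near the right endpoint the positivity of $\diffunder(t)$ is carried entirely by the boundary term. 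The paper handles this cleanly via Lemma~\ref{lem:diff_functions_prop}: after scaling $\diffunder(\tunder)$ by $\mathrm{e}^{\lambda(\tunder)}$ and using the limit~\eqref{eq:aux_limit} together with the definition $\tunder=(1-\delta)/(\varepsilon\Upsilon_{\underline{r}})$, the scaled boundary value converges to $1-(1+\delta/2)(1-\delta)=\delta/2+\delta^2/2>0$ (cf.~\eqref{eq:diff_under_limit}). This is an $\Theta(1)$ positive contribution after scaling, not a negligible remainder, and the argument that yields it — namely writing $\diffunder(t)=\diffunder(\tunder)-\int_t^{\tunder}\diffunder'(s)\,\mathrm{d}s$ and proving the two pieces have the right signs separately — is a structural ingredient your proposal lacks. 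Without it, a pointwise comparison of the $\Theta(\varepsilon)$ main term against the $o(\varepsilon)$ remainder cannot close, because the ``main term'' itself degenerates near $t=\tunder$.
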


We prove Lemma~\ref{lem:sufficient_inequalities} in Section~\ref{sss:inequalities}. Now we can conclude the proof of Lemma~\ref{lem:eigenfunction_bounds}.

\begin{proof}[Proof of Lemma~\ref{lem:eigenfunction_bounds}]
First, recall that $\wunder(t)$ and $\wover$ are non-negative by Proposition~\ref{prop:w_approx_prop}.

Since $\wunder(t)$ and $\Toperator{\tunder}\wunder(t)$ are constant for $t \ge \tunder$, it is enough to prove the first inequality of~\eqref{eq:goal_with_operator} for $t \le \tunder$. Similarly, in order to show the second inequality of~\eqref{eq:goal_with_operator}, it is enough to consider the case~${t \le \tover}$. By~\eqref{eq:T_operator_def} and the fact that $\Hp$ is the p.d.f.\ of $\tau_{\underline{p}}$ (see Notation~\ref{not:Hp}), we need to show that
\begin{equation}\label{eq:goal_with_expectation}
\ev \left( \wunder(\tau_{\underline{p}}) \cdot (\tau_{\underline{p}} \wedge t) \right) \le \wunder(t) \text{ for any } t \le \tunder
\quad \text{and} \quad
\ev \left( \wover(\tau_{\underline{p}}) \cdot (\tau_{\underline{p}} \wedge t) \right) \ge \wover(t) \text{ for any } t \le \tover.
\end{equation}

Since \eqref{eq:goal_with_expectation} holds at $t=0$ (see Proposition~\ref{prop:w_approx_prop}), it is enough to show that the inequalities hold for the derivatives w.r.t.~$t$, i.e.,
\begin{align}
\begin{split}\label{eq:goal_with_derivative}
&\ev \left( \wunder(\tau_{\underline{p}}) \cdot \indic{t < \tau_{\underline{p}}} \right) \le \wunder'(t) \; \text{ for any } \; t < \tunder,\\
&\ev \left( \wover(\tau_{\underline{p}}) \cdot \indic{t < \tau_{\underline{p}}} \right) \ge \wover'(t) \; \text{ for any } \; t < \tover.
\end{split}
\end{align}

Observe that the functions in the expectations in \eqref{eq:goal_with_derivative} are monotone increasing in $\tau_{\underline{p}}$ (see Proposition~\ref{prop:w_approx_prop}). Therefore, by the stochastic domination relations \eqref{eq:stoch_dom}, Lemma~\ref{lem:sufficient_inequalities} implies~\eqref{eq:goal_with_derivative} if $1-p_2 < \varepsilon_0$ (where $\varepsilon_0$ is the minimum of the $\varepsilon_0$ values that appear in Lemma~\ref{lem:stoch_dom}, Proposition~\ref{prop:w_approx_prop} and Lemma~\ref{lem:sufficient_inequalities}). The proof of Lemma~\ref{lem:eigenfunction_bounds} is complete.
\end{proof}

The rest of the paper is dedicated to the deferred proofs of Lemma~\ref{lem:stoch_dom}, Proposition~\ref{prop:w_approx_prop} and Lemma~\ref{lem:sufficient_inequalities}. In Section~\ref{sss:study_lambda}, we study the function~$\lamp(t)$ (introduced in Notation~\ref{not:lambda}), and establish bounds that squeeze~$\lamp(t)$ using first-order Taylor approximations. In Section~\ref{sss:stoch_dom}, we study the functions introduced in Definition~\ref{def:H_approx} to construct the random variables $\tauunder{\delta}$, $\tauover{\delta}$ required for the stochastic dominance result in Lemma~\ref{lem:stoch_dom}. In Section~\ref{sss:asymp_prop}, we study the asymptotic properties of the functions used in the construction of $\wunder$ and $\wover$
and prove Proposition~\ref{prop:w_approx_prop}.
Finally, in Section~\ref{sss:inequalities}, we combine the established explicit formulas and the asymptotic relations to prove Lemma~\ref{lem:sufficient_inequalities}.

\subsubsection{Approximation of the function \texorpdfstring{$\lamp(t)$}{lambda}}\label{sss:study_lambda}

The goal of Section~\ref{sss:study_lambda} is to prove Lemma~\ref{lem:lambda_p_squeeze} which states that the function $\lamp$ (that was introduced in Notation~\ref{not:lambda}) can be squeezed between the functions $\lamunder{\delta}$ and $\lamover{\delta}$ on the interval $[0,\toverone]$, where $\lamunder{\delta}$ and~$\lamover{\delta}$ are defined in~\eqref{eq:lambda_under_over_def} below.
Loosely speaking, the functions $\lamunder{\delta}$ and~$\lamover{\delta}$ arise as the first-order Taylor approximation of $\lamp$ in terms of the variable $\varepsilon$ (with the extra $(1 \pm \delta)$ terms making sure that the squeeze actually happens). In Claim~\ref{cl:properties_lamerror} we provide explicit formulas for 
$\lamunder{\delta}$ and $\lamover{\delta}$.

\begin{definition}[First-order Taylor approximation of $\lamp$]
Let $\underline{r} \in \Rspace$ (cf.~\eqref{eq:R_space}). We define
\begin{equation}\label{eq:lam_error_def}
\lamerror(t) := \lim \limits_{\varepsilon \to 0} \frac{\lambda_{\pfromr}(t) - \lambda(t)}{\varepsilon},
\end{equation}
where $\pfromr$ is defined in Notation~\ref{not:p(eps,r)}.

Let $\varepsilon \in (0,1]$ and consider the distribution~$\underline{p} = \pfromr \in \nonreg$. For any $\delta \in (0,1)$, we define
\begin{equation}\label{eq:lambda_under_over_def}
\lamunder{\delta}(t) := \lambda(t) + \left( 1-\delta \right) \varepsilon \cdot \lamerror(t), \qquad \lamover{\delta}(t) := \lambda(t) + \left( 1+\delta \right) \varepsilon \cdot \lamerror(t).
\end{equation}
\end{definition}

By the definition of~$\lamerror$ (see~\eqref{eq:lam_error_def}), the first-order Taylor approximation (in terms of~$\varepsilon$) of the function~$\lamp(t)$ is~${\lambda(t) + \varepsilon \cdot \lamerror(t)}$. We will show that for any $\delta \in (0,1)$, the functions $\lamunder{\delta}(t)$ and $\lamover{\delta}(t)$ squeeze~$\lamp(t)$ on the interval $[0,\, \toverone]$ if~$\varepsilon$ is small enough (see Lemma~\ref{lem:lambda_p_squeeze}). Before that, we introduce some notation.

\begin{definition}[Tail of $\underline{r}$]
Let $\underline{r} \in \Rspace$ (cf.\ \eqref{eq:R_space}). We define $s_k$, $k \in \mathbb{N}_+$ as
\begin{equation}\label{eq:sk_def}
s_k := \sum \limits_{d=k}^\Delta r_d
\quad \text{for} \quad
k=2,\, 3,\, \dots, \, \Delta
\qquad \text{and} \qquad
s_k :=0
\quad \text{if} \quad
k=1 \; \text{ or } \; k > \Delta.
\end{equation}
Observe that for any $\underline{r} \in \Rspace$, we have $s_2 = 0$ and $s_3=1$.
\end{definition}

We note that for any~${\underline{p} \in \nonreg}$, we can define $(q_k)_{k=2}^\Delta$ as in~\eqref{eq:qk_def}, $\varepsilon$ and $\underline{r} =(r_k)_{k=2}^\Delta$ as in~\eqref{eq:eps_rk_def} and $(s_k)_{k=2}^\Delta$ as in~\eqref{eq:sk_def} for this $\underline{r}=(r_k)_{k=2}^\Delta$. Observe that we have $q_k=\varepsilon \cdot s_k$ for $k = 3,\, \dots,\, \Delta$. In particular, the ODE~\eqref{eq:lambda_p_ode_with_q} for the function~$\lamp(t)$ can be equivalently rewritten as
\begin{equation}\label{eq:lambda_p_ode_with_s}
\lambda'_{\underline{p}}(t) = \mathrm{e}^{-\lamp(t)} \cdot \left( 1+\lamp(t)+ \varepsilon \cdot \sum \limits_{k=2}^{\Delta-1} \frac{\lambda^k_{\underline{p}}(t)}{k!} \cdot s_{k+1} \right), \qquad \lamp(0) = 0.
\end{equation}

Now we study the function~$\lamerror(t)$.

\begin{claim}[Properties of $\lamerror$]\label{cl:properties_lamerror}
Let $\underline{r} \in \Rspace$, and consider the function $\lamerror(t)$ defined in~\eqref{eq:lam_error_def}. It solves the initial value problem
\begin{equation}\label{eq:lam_error_ode}
\lamerror'(t) = \mathrm{e}^{-\lambda(t)} \cdot \left( -\lambda(t) \cdot \lamerror(t)+ \sum \limits_{k=2}^{\Delta-1} \frac{\lambda^k(t)}{k!} \cdot s_{k+1} \right), \qquad \lamerror(0) = 0,
\end{equation}
where $(s_k)_{k=3}^\Delta$ is defined in~\eqref{eq:sk_def}. Furthermore, we have
\begin{equation}\label{eq:lam_error_solution}
\lamerror(t) = \lambda'(t) \cdot \int \limits_0^{t} \frac{1}{1+\lambda(s)} \cdot \left( \sum \limits_{k=2}^{\Delta-1} \frac{\lambda^k(s)}{k!} \cdot s_{k+1}\right) \, \mathrm{d}s = \lambda'(t) \cdot \int \limits_0^{\lambda(t)} \frac{\mathrm{e}^x}{(1+x)^2} \cdot \left( \sum \limits_{k=2}^{\Delta-1} \frac{x^k}{k!} \cdot s_{k+1}\right) \, \mathrm{d}x.
\end{equation}

Moreover, for any $t \in \mathbb{R}_+$, we have
\begin{equation}\label{eq:lam_error_bounds}
0 \le \lamerror(t) \le \frac 12 \sum \limits_{k=1}^{\Delta-2} \frac{\lambda^k(t)}{k!}s_{k+2}.
\end{equation}
\end{claim}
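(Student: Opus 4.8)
The plan is to prove the three assertions in order: first the variational ODE~\eqref{eq:lam_error_ode}, then the closed form~\eqref{eq:lam_error_solution} by solving it, and finally the bounds~\eqref{eq:lam_error_bounds} read off from the closed form. For the ODE, I would regard the right-hand side of~\eqref{eq:lambda_p_ode_with_s} as a function $F(\varepsilon,\mu)=\mathrm{e}^{-\mu}\bigl(1+\mu+\varepsilon\sum_{k=2}^{\Delta-1}\tfrac{\mu^{k}}{k!}s_{k+1}\bigr)$, which is jointly real-analytic in $(\varepsilon,\mu)$. By the standard theorem on smooth dependence of solutions of ODEs on parameters, on any compact time interval the solution $\lambda_{\pfromr}(t)$ of~\eqref{eq:lambda_p_ode_with_s} is a $C^{1}$ function of $\varepsilon$; this shows that the limit in~\eqref{eq:lam_error_def} exists, that $\lamerror(t)=\partial_{\varepsilon}\lambda_{\pfromr}(t)\big|_{\varepsilon=0}$, and that $\lamerror$ solves the variational equation obtained by differentiating~\eqref{eq:lambda_p_ode_with_s} in $\varepsilon$ at $\varepsilon=0$ (the mixed partials in $t$ and $\varepsilon$ commute by analyticity). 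Using $\lambda_{\pfromr}(t)\big|_{\varepsilon=0}=\lambda(t)$ one computes $\partial_{\mu}F(0,\lambda)=-\lambda\,\mathrm{e}^{-\lambda}$ and $\partial_{\varepsilon}F(0,\lambda)=\mathrm{e}^{-\lambda}\sum_{k=2}^{\Delta-1}\tfrac{\lambda^{k}}{k!}s_{k+1}$, so the variational equation is exactly~\eqref{eq:lam_error_ode}, and the initial condition $\lamerror(0)=0$ follows from $\lambda_{\pfromr}(0)=0$ for every $\varepsilon$.

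\textbf{Solving the linear ODE.} Equation~\eqref{eq:lam_error_ode} is the linear first-order ODE $\lamerror'+\lambda\,\mathrm{e}^{-\lambda}\,\lamerror=\mathrm{e}^{-\lambda}\sum_{k=2}^{\Delta-1}\tfrac{\lambda^{k}}{k!}s_{k+1}$. From~\eqref{eq:lambda_2reg_ode} we have $\frac{\mathrm{d}}{\mathrm{d}t}\ln(1+\lambda(t))=\frac{\lambda'(t)}{1+\lambda(t)}=\mathrm{e}^{-\lambda(t)}$, hence $\int_{0}^{t}\lambda(s)\mathrm{e}^{-\lambda(s)}\,\mathrm{d}s=\lambda(t)-\ln(1+\lambda(t))$, and the integrating factor is $\mathrm{e}^{\lambda(t)}/(1+\lambda(t))=1/\lambda'(t)$. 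Since $\mathrm{e}^{-\lambda(t)}/\lambda'(t)=1/(1+\lambda(t))$, multiplying~\eqref{eq:lam_error_ode} by $1/\lambda'(t)$ turns it into $\frac{\mathrm{d}}{\mathrm{d}t}\bigl(\lamerror(t)/\lambda'(t)\bigr)=\frac{1}{1+\lambda(t)}\sum_{k=2}^{\Delta-1}\tfrac{\lambda^{k}(t)}{k!}s_{k+1}$; integrating from $0$ (where $\lamerror/\lambda'=0$ since $\lamerror(0)=0$ and $\lambda'(0)=1$) yields the first equality in~\eqref{eq:lam_error_solution}, and the substitution $x=\lambda(s)$, for which $\mathrm{d}s/(1+\lambda(s))=\mathrm{e}^{x}\,\mathrm{d}x/(1+x)^{2}$, yields the second. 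Uniqueness for the linear IVP confirms that this explicit function is indeed $\lamerror$.

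\textbf{The bounds and the main obstacle.} Non-negativity of $\lamerror$ is immediate from the second form in~\eqref{eq:lam_error_solution}: $\lambda'(t)>0$, and since $r_{d}\ge 0$ for $d\ge 3$ all tail sums $s_{k+1}$ with $k\ge 2$ are non-negative, so the integrand is non-negative for $x\ge 0$. For the upper bound I would argue term by term in the (non-negative) coefficients $s_{k+1}$, and here lies the one genuinely non-routine step: spotting that $\frac{\mathrm{d}}{\mathrm{d}x}\bigl(\frac{\mathrm{e}^{x}}{1+x}\bigr)=\frac{x\,\mathrm{e}^{x}}{(1+x)^{2}}$. This lets one write $\frac{\mathrm{e}^{x}x^{k}}{(1+x)^{2}}=x^{k-1}\frac{\mathrm{d}}{\mathrm{d}x}\bigl(\frac{\mathrm{e}^{x}}{1+x}\bigr)$ and integrate by parts (the boundary term at $0$ vanishing because $k-1\ge 1$ for $k\ge 2$), so that after multiplying by $\lambda'(t)=\mathrm{e}^{-\lambda(t)}(1+\lambda(t))$,
\[
\lambda'(t)\int_{0}^{\lambda(t)}\frac{\mathrm{e}^{x}x^{k}}{(1+x)^{2}}\,\mathrm{d}x=\lambda^{k-1}(t)-(k-1)\,\lambda'(t)\int_{0}^{\lambda(t)}\frac{x^{k-2}\mathrm{e}^{x}}{1+x}\,\mathrm{d}x\le\lambda^{k-1}(t).
\]
Both sums in~\eqref{eq:lam_error_solution} and~\eqref{eq:lam_error_bounds} involve only $s_{3},\dots,s_{\Delta}$; by the displayed inequality, for $m\in\{3,\dots,\Delta\}$ the coefficient of $s_{m}$ in $\lamerror(t)$ is at most $\lambda^{m-2}(t)/(m-1)!$, while in the right-hand side of~\eqref{eq:lam_error_bounds} it equals $\tfrac12\,\lambda^{m-2}(t)/(m-2)!$, and $1/(m-1)!\le 1/\bigl(2\,(m-2)!\bigr)$ precisely because $m\ge 3$; summing over $m$ completes the proof. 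Everything apart from the exact-derivative trick — smooth dependence on parameters, solving a scalar linear ODE, and the sign bookkeeping — is routine.
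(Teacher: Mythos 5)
Your proof is correct, and for the first three assertions it tracks the paper closely: the paper also obtains~\eqref{eq:lam_error_ode} by differentiating~\eqref{eq:lambda_p_ode_with_s} in~$\varepsilon$ at~$\varepsilon=0$ (you spell out the smooth-dependence-on-parameters justification, which the paper leaves implicit), solves the resulting linear first-order ODE to get~\eqref{eq:lam_error_solution}, and reads off non-negativity from the sign of the integrand. Where you genuinely diverge is the upper bound in~\eqref{eq:lam_error_bounds}. The paper uses a barrier/comparison argument on the ODE: it observes that both sides vanish at $t=0$, that the right-hand side $g(t):=\frac12\sum_{k=1}^{\Delta-2}\frac{\lambda^k(t)}{k!}s_{k+2}$ is increasing, and then shows that whenever $\lamerror(t)=g(t)$ one has, after substituting into~\eqref{eq:lam_error_ode} and reindexing,
\begin{equation*}
\lamerror'(t)=\mathrm{e}^{-\lambda(t)}\sum_{k=2}^{\Delta-1}\frac{\lambda^k(t)}{k!}\,s_{k+1}\left(1-\frac{k}{2}\right)\le 0\le g'(t),
\end{equation*}
so $\lamerror$ cannot overtake $g$. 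You instead work directly with the closed form~\eqref{eq:lam_error_solution}: you notice the exact derivative $\frac{\mathrm{d}}{\mathrm{d}x}\bigl(\frac{\mathrm{e}^x}{1+x}\bigr)=\frac{x\,\mathrm{e}^x}{(1+x)^2}$, integrate by parts to get the coefficient-wise bound $\lambda'(t)\int_0^{\lambda(t)}\frac{\mathrm{e}^x x^{k}}{(1+x)^2}\,\mathrm{d}x\le\lambda^{k-1}(t)$, and then compare coefficients of $s_m$ using $1/(m-1)!\le 1/(2(m-2)!)$ for $m\ge 3$. Both routes are correct; the paper's barrier argument stays entirely in the ODE picture and matches the style of several other comparison proofs in the paper (cf.\ the proofs of Lemma~\ref{lem:lambda_p_squeeze} and Lemma~\ref{lem:H_int_squeeze}), while your integration-by-parts argument is self-contained once the closed form is in hand and gives the bound term by term rather than globally.
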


\begin{proof}
The initial value problem~\eqref{eq:lam_error_ode} is obtained by differentiating \eqref{eq:lambda_p_ode_with_s} w.r.t.~$\varepsilon$ and substituting $\varepsilon=0$. The formula in~\eqref{eq:lam_error_solution} is the unique solution of the first-order linear ODE~\eqref{eq:lam_error_ode}.

The non-negativity of $\lamerror(t)$ follows from \eqref{eq:lam_error_solution} and the fact that $\lambda'(t) \ge 0$ (cf.\ \eqref{eq:lambda_2reg_ode}).

Finally, we prove the upper bound in \eqref{eq:lam_error_bounds}. First, observe that $\lamerror(0) = \frac 12 \sum_{k=1}^{\Delta-2} \frac{\lambda^k(0)}{k!}s_{k+2}=0$ (cf.~\eqref{eq:lam_error_ode} and~\eqref{eq:lambda_2reg_ode}). Therefore, it is enough to prove that if there is equality at the right-hand side of~\eqref{eq:lam_error_bounds} at some~${t \in \mathbb{R}_+}$, then the inequality holds for the derivatives at~$t$.
Observe that the function on the right-hand side of~\eqref{eq:lam_error_bounds} is monotone increasing, since it is the sum of monotone increasing functions (cf.~\eqref{eq:lambda_2reg_ode}). Therefore, it is enough to prove that
\begin{equation}\label{eq:forbidden_region}
\text{if } {\lamerror(t) = \frac 12 \sum_{k=1}^{\Delta-2} \frac{\lambda^k(t)}{k!}s_{k+2}} \text{ at some } t \in \mathbb{R}_+, \text{ then we have } \lamerror'(t) \le 0.
\end{equation}
So let us assume that we have ${\lamerror(t) = \frac 12 \sum_{k=1}^{\Delta-2} \frac{\lambda^k(t)}{k!}s_{k+2}}$ at some $t \in \mathbb{R}_+$. Then we obtain~\eqref{eq:forbidden_region}:
\begin{equation*}
\lamerror'(t) \stackrel{\eqref{eq:lam_error_ode}}{=}\mathrm{e}^{-\lambda(t)} \cdot \sum \limits_{k=2}^{\Delta-1} \frac{\lambda^k(t)}{k!} \cdot s_{k+1} \cdot \left(1-\frac k2 \right) \le 0.
\end{equation*}
This completes the proof of~\eqref{eq:lam_error_bounds} and, consequently, the proof of Claim~\ref{cl:properties_lamerror}.
\end{proof}

The following bounds will turn out to be useful later.

\begin{claim}[Bounds on $\lambda(t)$ and $\lamover{\delta}(t)$]\label{cl:lambda_bounds}
For any $t \in \mathbb{R}_+$, we have
\begin{equation}\label{eq:lam_grows_logarithmic}
\ln (t+1) \le \lambda(t) \le 2\ln(2t+1)
\quad \text{and} \quad
\lambda'(t) \ge \frac{1}{t+1}.
\end{equation}

Moreover, for any $\delta \in (0,1)$ and $k \in \mathbb{N}_+$, there exists $\varepsilon_0>0$ such that for any $\underline{p} \in \nearreg{\varepsilon_0}$ and for any $t \in [0,\toverone]$, we have
\begin{equation}\label{eq:lambda_universal_bounds}
\mathrm{(a)} \quad \lamover{\delta}(t) \le 3\ln \left( \frac{2}{\varepsilon}\right),
\qquad \quad
\mathrm{(b)}\quad \varepsilon \cdot \lambda^k(t)\le \delta,
\qquad \quad
\mathrm{(c)}\quad (\lamover{\delta}(t))^k \le \lambda^k(t) \cdot \left(1 + 2^k \cdot \frac{2\varepsilon\lamerror(t)}{\lambda(t)} \right),
\end{equation}
where $\varepsilon$ and $\underline{r}=(r_k)_{k=2}^\Delta$ are defined in~\eqref{eq:eps_rk_def}.
\end{claim}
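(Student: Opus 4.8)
The plan is to prove the three groups of bounds in sequence, reusing the explicit formulas and monotonicity facts established in Claim~\ref{cl:properties_lamerror} and in the proof of the integral asymptotics claim. For the first group \eqref{eq:lam_grows_logarithmic}: the lower bound $\lambda(t) \ge \ln(t+1)$ and $\lambda'(t) \ge 1/(t+1)$ are already contained in the proof of the asymptotics-of-an-integral claim (via $\gamma_{\chi_2}(\lambda) = \mathrm{e}^{\lambda}/(1+\lambda) \le \mathrm{e}^{\lambda}$, hence $\psi_{\chi_2}(\lambda) \le \mathrm{e}^{\lambda}-1$, hence $\lambda(t) \ge \ln(t+1)$, and then $\lambda'(t) = (1+\lambda(t))\mathrm{e}^{-\lambda(t)} \ge \mathrm{e}^{-\lambda(t)} \ge 1/(t+1)$). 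For the upper bound $\lambda(t) \le 2\ln(2t+1)$, I would argue by the same ``forbidden region'' differential-inequality device used repeatedly in the paper: check equality fails at $t=0$ on the correct side (at $t=0$ the left side is $0$, the right side is $0$, so one must be slightly more careful and instead compare derivatives whenever $\lambda(t) = 2\ln(2t+1)$), and show that when $\lambda(t) = 2\ln(2t+1)$ one has $\lambda'(t) = (1+\lambda(t))\mathrm{e}^{-\lambda(t)} = (1 + 2\ln(2t+1))(2t+1)^{-2}$, which is $\le \tfrac{4}{2t+1} = \tfrac{d}{dt}\, 2\ln(2t+1)$; this reduces to the elementary inequality $(1+2u)\mathrm{e}^{-u} \le 4\mathrm{e}^{-u/2}$ for $u \ge 0$, i.e. $(1+2u)\mathrm{e}^{-u/2} \le 4$, which holds since $(1+2u)\mathrm{e}^{-u/2}$ is maximized at $u=0$ where it equals... actually one checks its maximum is below $4$ by a one-variable calculation. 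Since both sides vanish at $t=0$ and the right side is strictly increasing, the standard comparison argument then gives $\lambda(t) \le 2\ln(2t+1)$ for all $t$.

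For \eqref{eq:lambda_universal_bounds}(a): by \eqref{eq:lambda_under_over_def}, $\lamover{\delta}(t) = \lambda(t) + (1+\delta)\varepsilon\lamerror(t) \le \lambda(t) + 2\varepsilon\lamerror(t)$, and by the upper bound in \eqref{eq:lam_error_bounds} together with \eqref{eq:lam_grows_logarithmic}, $\lamerror(t)$ is bounded by a fixed polynomial in $\lambda(t)$, which for $t \in [0,\toverone]$ is bounded in terms of $\ln(1/\varepsilon)$ (note $\toverone = 2/\sum_{k\ge 3} k(k-2)p_k$, and as $\underline{p}$ approaches $\chi_2$ this grows like $1/\varepsilon$ up to constants, so $\lambda(\toverone) \le 2\ln(2\toverone+1)$ is $\le$ const $\cdot \ln(1/\varepsilon)$ for $\varepsilon$ small); then $\varepsilon \lamerror(t) \to 0$ and the whole thing is eventually below $3\ln(2/\varepsilon)$ once $\varepsilon_0$ is small enough. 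For (b): again $\lambda^k(t) \le (2\ln(2\toverone+1))^k \le (\text{const}\cdot \ln(1/\varepsilon))^k$ on the interval, and $\varepsilon \cdot (\ln(1/\varepsilon))^k \to 0$ as $\varepsilon \to 0$, so for $\varepsilon_0$ small enough this is $\le \delta$. For (c): write $\lamover{\delta}(t)^k = \lambda(t)^k\bigl(1 + (1+\delta)\varepsilon\lamerror(t)/\lambda(t)\bigr)^k$ and bound $(1+x)^k \le 1 + 2^k x$ for $x \in [0,1]$, using part (b)-type estimates to guarantee $(1+\delta)\varepsilon\lamerror(t)/\lambda(t) \le 1$ on the interval for $\varepsilon_0$ small (here one uses $\lambda(t) \ge \ln(t+1) \ge \ln 1 = 0$... so one must be careful near $t=0$; but near $t=0$, $\lamerror(t)/\lambda(t) \to \lamerror'(0)/\lambda'(0) = 0$ by \eqref{eq:lam_error_ode}, so the ratio is bounded near $0$ as well, and elsewhere $\lambda(t)$ is bounded below). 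Finally note $(1+\delta) < 2$ so $2^k(1+\delta)\varepsilon\lamerror(t)/\lambda(t) \le 2^{k}\cdot\frac{2\varepsilon\lamerror(t)}{\lambda(t)}$, matching the stated form.

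The main obstacle I anticipate is bookkeeping the interplay between the interval endpoint $\toverone$ and $\varepsilon$: one must verify that $\lambda$ evaluated at the right end of $[0,\toverone]$ grows only logarithmically in $1/\varepsilon$, which requires knowing $\toverone = \Theta(1/\varepsilon)$ as $\underline{p} \to \chi_2$. This needs the elementary bound $\sum_{k=3}^\Delta k(k-2)p_k \le \Delta(\Delta-2)\sum_{k=3}^\Delta p_k = \Delta(\Delta-2)\varepsilon$, giving $\toverone \ge \frac{2}{\Delta(\Delta-2)\varepsilon}$, hence $\lambda(\toverone) \le 2\ln(2\toverone+1)$ can be $\Theta(\ln(1/\varepsilon))$ — and then the key point is that $\varepsilon \cdot (\ln(1/\varepsilon))^k \to 0$, which drives all of (a), (b), (c). Everything else is a routine application of either the explicit solution formula \eqref{eq:lam_error_solution}, the polynomial upper bound \eqref{eq:lam_error_bounds}, or the forbidden-region comparison technique already exhibited in the proof of Claim~\ref{cl:properties_lamerror}.
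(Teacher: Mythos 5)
Your lower bound $\lambda(t)\ge\ln(t+1)$ and parts (a)--(c) are all obtainable by essentially the paper's argument, and your forbidden-region proof of the upper bound $\lambda(t)\le 2\ln(2t+1)$ is a valid alternative to the paper's route (the paper instead bounds $\psi(\lambda)=\int_0^\lambda e^s/(1+s)\,\mathrm{d}s$ between $\tfrac12(e^{\lambda/2}-1)$ and $e^{\lambda}-1$ and inverts). However there are two genuine errors that would need repair.

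First, your derivation of $\lambda'(t)\ge 1/(t+1)$ is broken: you write $\lambda'(t)=(1+\lambda(t))e^{-\lambda(t)}\ge e^{-\lambda(t)}\ge 1/(t+1)$, but the last step runs the wrong way. Since you have just shown $\lambda(t)\ge\ln(t+1)$, you in fact have $e^{-\lambda(t)}\le 1/(t+1)$, not $\ge$. The factor $(1+\lambda(t))$ is precisely what must compensate, and this needs a separate argument. The paper handles it by noting that $g(t):=(1+t)(1+\lambda(t))-e^{\lambda(t)}$ satisfies $g(0)=0$ and $g'(t)=(1+t)\lambda'(t)\ge0$ (the terms $(1+\lambda(t))$ and $-e^{\lambda(t)}\lambda'(t)$ cancel using the ODE), so $g\ge0$, which is equivalent to $\lambda'(t)\ge 1/(t+1)$. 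Alternatively one can verify $(1+\lambda)(\psi(\lambda)+1)\ge e^{\lambda}$ directly in the $\lambda$ variable. Either way, the step as you wrote it does not go through.

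Second, to get $\lambda(t)=O(\ln(1/\varepsilon))$ on $[0,\toverone]$ (which drives (a), (b), (c)), you need an \emph{upper} bound on $\toverone$ of the form $\toverone\le C/\varepsilon$, hence a \emph{lower} bound on the denominator $\sum_{k=3}^\Delta k(k-2)p_k$. You instead bound that sum from above by $\Delta(\Delta-2)\varepsilon$ and conclude $\toverone\ge \tfrac{2}{\Delta(\Delta-2)\varepsilon}$; this is the wrong direction and does not give the needed bound on $\lambda(\toverone)$. The correct half is $\sum_{k=3}^\Delta k(k-2)p_k\ge 3\sum_{k=3}^\Delta p_k = 3\varepsilon$ (since $k(k-2)\ge 3$ for $k\ge3$), whence $\toverone\le 2/(3\varepsilon)$ and $\lambda(\toverone)\le 2\ln(2\toverone+1)\le 2\ln(2/\varepsilon)$, which is exactly the estimate the paper uses. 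Finally, a minor algebra slip: in your forbidden-region check for the upper bound, when $\lambda(t)=2\ln(2t+1)=:2u$, the required comparison is $(1+2u)e^{-2u}\le 4e^{-u}$, i.e.\ $(1+2u)e^{-u}\le 4$, not $(1+2u)e^{-u}\le 4e^{-u/2}$; this is easily checked but the expression you wrote is not the one the substitution produces.
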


\begin{proof}
The solution of the initial value problem \eqref{eq:lambda_2reg_ode}
is $\lambda(t) = \psi^{-1}(t)$, where $\psi(\lambda) := \int_0^\lambda \frac{\mathrm{e}^s}{1+s}\, \mathrm{d}s$. Observe that for any $\lambda \in \mathbb{R}_+$, we have
\begin{equation}\label{eq:psi_bounds}
\psi(\lambda) \le \int \limits_0^\lambda \mathrm{e}^s \, \mathrm{d}s = \mathrm{e}^{\lambda}-1
\qquad \text{and} \qquad
\psi(\lambda) \ge \int \limits_0^\lambda \frac{\mathrm{e}^s}{1+\lambda}\, \mathrm{d}s = \frac{\mathrm{e}^{\lambda}-1}{1+\lambda} \ge \frac 12 \left( \mathrm{e}^{\lambda/2}-1\right),
\end{equation}
where in the last inequality we use that
\begin{equation*}
\mathrm{e}^{\lambda}-1 = \left( \mathrm{e}^{\lambda/2}-1\right) \cdot \left( \mathrm{e}^{\lambda/2}+1\right) \ge
\left( \mathrm{e}^{\lambda/2}-1\right) \cdot \left( 2 + \frac{\lambda}{2}\right). 
\end{equation*}

Therefore, by inverting the bounds in \eqref{eq:psi_bounds}, we obtain that $\ln(t+1) \le \psi^{-1}(t) \le 2\ln(2t+1)$, which is equivalent to the bounds on $\lambda(t)$ in \eqref{eq:lam_grows_logarithmic}.

By \eqref{eq:lambda_2reg_ode}, $\lambda'(t) \ge 1/(t+1)$ is equivalent to 
\begin{equation}\label{eq:lam_deri_lower_bound_equivalent}
(1+t)(1+\lambda(t)) - \mathrm{e}^{\lambda(t)} \ge 0.
\end{equation}

Observe that \eqref{eq:lam_deri_lower_bound_equivalent} holds at $t=0$ and the derivative of the left-hand side is $(1+t)\lambda'(t) \ge 0$. Hence, we obtain that \eqref{eq:lam_deri_lower_bound_equivalent} is satisfied for any $t \in\mathbb{R}_+$, i.e., the proof of \eqref{eq:lam_grows_logarithmic} is complete.

In order to prove the inequalities in \eqref{eq:lambda_universal_bounds}, observe that
\begin{equation*}
\toverone \stackrel{\eqref{eq:tc_approx}}{\le} \frac{2}{3(1-p_2)} = \frac{2}{3\varepsilon}.
\end{equation*}
Thus, the upper bound in~\eqref{eq:lam_grows_logarithmic} gives that for any $\underline{p}\in \nearreg{2/3}$, we have
\begin{equation}\label{eq:lam_tover_bound}
\lambda(t) \le 2\ln\left( \frac{2}{\varepsilon}\right) \quad \text{for any} \quad t \in [0,\toverone].
\end{equation}

Together with~\eqref{eq:lambda_under_over_def} and~\eqref{eq:lam_error_bounds}, it implies~\eqref{eq:lambda_universal_bounds}(a).
The inequality~\eqref{eq:lambda_universal_bounds}(b) follows from \eqref{eq:lam_tover_bound} and the fact that $\varepsilon \cdot (\ln (1/\varepsilon))^k \to 0$ as $\varepsilon \to 0$ for any $k \in \mathbb{N}_+$. In order to prove \eqref{eq:lambda_universal_bounds}(c), observe that by \eqref{eq:lam_error_bounds} and \eqref{eq:lambda_universal_bounds}(b), $\varepsilon \cdot \lamerror(t)/\lambda(t) <1$ on the interval $[0,\toverone]$ if $\varepsilon$ is small enough. Therefore, \eqref{eq:lambda_universal_bounds}(c) follows from the fact that for any $x \in [0,1]$ and $k \in \mathbb{N}_+$, we have $(1+x)^k \le 1+2^k \cdot x$.
\end{proof}

Besides the bounds stated in Claim~\ref{cl:lambda_bounds}, it is also useful to note that for any $x \ge 0$, we have
\begin{equation}\label{eq:exp_bound}
1-x \le \mathrm{e}^{-x} \le 1-x+\frac{x^2}{2}.
\end{equation}

Now we are ready to prove that $\lamunder{\delta}(t)$ and $\lamover{\delta}(t)$ (both defined in \eqref{eq:lambda_under_over_def}) squeeze $\lamp(t)$ on $[0,\toverone]$.

\begin{lemma}[Bounds on $\lamp$]\label{lem:lambda_p_squeeze}
For any $\delta \in (0,1)$, there exists $\varepsilon_0>0$ such that for any $\underline{p} \in \nearreg{\varepsilon_0}$, we have
\begin{equation}\label{eq:lambda_p_bounds}
\lamunder{\delta}(t) \le \lamp(t) \le \lamover{\delta}(t) \qquad \text{for any } \qquad t \le \toverone.
\end{equation}
\end{lemma}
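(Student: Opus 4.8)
The strategy is the standard "differential inequality squeeze" that the paper has already used twice (for $\lamerror$ in Claim~\ref{cl:properties_lamerror} and anticipates using for $\Hint$ and $w_{\underline{p}}$): show that the claimed lower bound $\lamunder{\delta}$ and upper bound $\lamover{\delta}$ satisfy the right differential inequalities relative to the ODE~\eqref{eq:lambda_p_ode_with_s} that defines $\lamp$, so that a first-touch / continuity argument forces $\lamp$ to stay trapped between them on $[0,\toverone]$. Concretely, all three functions vanish at $t=0$ (by~\eqref{eq:lambda_2reg_ode}, \eqref{eq:lam_error_ode}, \eqref{eq:lambda_under_over_def}), so it suffices to prove: if $\lamp(t)=\lamover{\delta}(t)$ at some first time $t\in(0,\toverone]$, then $\lamp'(t)<\lamover{\delta}{}'(t)$ (and symmetrically for the lower bound with $\lamunder{\delta}$). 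This would contradict $\lamp$ reaching the barrier from below, giving~\eqref{eq:lambda_p_bounds}.

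**Key steps.** First I would compute $\lamover{\delta}{}'(t) = \lambda'(t) + (1+\delta)\varepsilon\,\lamerror'(t)$ and substitute the ODEs~\eqref{eq:lambda_2reg_ode} and~\eqref{eq:lam_error_ode}, obtaining an explicit expression
\[
\lamover{\delta}{}'(t) = \mathrm{e}^{-\lambda(t)}\!\left( 1+\lambda(t) + (1+\delta)\varepsilon\Big(-\lambda(t)\lamerror(t) + \sum_{k=2}^{\Delta-1}\tfrac{\lambda^k(t)}{k!}s_{k+1}\Big)\right).
\]
Next I would evaluate $\lamp'(t)$ at a hypothetical touching point using~\eqref{eq:lambda_p_ode_with_s} with $\lamp(t)=\lamover{\delta}(t)=\lambda(t)+(1+\delta)\varepsilon\lamerror(t)$ substituted in. The two expressions differ in two ways: the prefactor $\mathrm{e}^{-\lamover{\delta}(t)}$ versus $\mathrm{e}^{-\lambda(t)}$, and the polynomial terms evaluated at $\lamover{\delta}(t)$ versus $\lambda(t)$. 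The plan is to control both error terms perturbatively in $\varepsilon$ using Claim~\ref{cl:lambda_bounds}: part~(c) of~\eqref{eq:lambda_universal_bounds} lets me write $(\lamover{\delta}(t))^k = \lambda^k(t)(1+O(\varepsilon\lamerror(t)/\lambda(t)))$, the elementary bounds~\eqref{eq:exp_bound} handle the exponential, and~\eqref{eq:lambda_universal_bounds}(b) guarantees $\varepsilon\lambda^k(t)\le\delta$ so all the cross terms are genuinely small on $[0,\toverone]$. After collecting terms, the leading-order discrepancy between $\lamp'(t)$ and $\lamover{\delta}{}'(t)$ at the touching point should be $-\delta\varepsilon\,\mathrm{e}^{-\lambda(t)}\big(-\lambda(t)\lamerror(t)+\sum_k\tfrac{\lambda^k(t)}{k!}s_{k+1}\big) = -\delta\varepsilon\,\lamerror'(t)$ plus $o(\varepsilon)$-type corrections; since $\lamerror'(t)\ge 0$ (it is $\mathrm{e}^{-\lambda(t)}$ times the non-negative quantity $\lamerror$-ODE right side — or one can see it directly from~\eqref{eq:lam_error_solution}), for $\varepsilon$ small enough this forces $\lamp'(t)<\lamover{\delta}{}'(t)$, as desired. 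The lower bound is entirely analogous, with the sign of the $\delta$-term flipped in our favor.

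**Main obstacle.** The delicate point is making the perturbative error control uniform: I need a single $\varepsilon_0$ (depending only on $\delta$ and $\Delta$, not on $\underline{r}$ or $t$) such that all the $O(\varepsilon)$ corrections arising from the exponential and from replacing $\lamover{\delta}(t)$ by $\lambda(t)$ in the polynomial are dominated by the good $-\delta\varepsilon\lamerror'(t)$ term across the whole interval $[0,\toverone]$. The subtlety is that $\toverone$ itself grows like $1/\varepsilon$ and $\lambda(t)$ grows logarithmically up to $\sim 2\ln(2/\varepsilon)$ (by~\eqref{eq:lam_tover_bound}), so the polynomial terms $\lambda^k(t)$ can be as large as $(\ln(1/\varepsilon))^k$; I must check that $\varepsilon\cdot(\ln(1/\varepsilon))^k\to 0$ (which is~\eqref{eq:lambda_universal_bounds}(b)) absorbs these. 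The bookkeeping is routine given Claim~\ref{cl:lambda_bounds}, but one must be careful that the "error" terms are bounded by a \emph{constant multiple} of the quantities appearing in $\lamerror'(t)$ so that the comparison closes — in particular near $t=0$ where $\lamerror(t)$ and $\lambda(t)$ are both small, one should verify the ratio $\lamerror(t)/\lambda(t)$ stays bounded, which follows from~\eqref{eq:lam_error_solution} and~\eqref{eq:lam_grows_logarithmic}. Once the uniform $\varepsilon_0$ is secured, the first-touch argument concludes the proof.
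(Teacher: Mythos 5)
Your overall strategy --- all three functions vanish at $t=0$, so reduce to a first-touch comparison of derivatives via~\eqref{eq:lambda_p_ode_with_s}, \eqref{eq:lambda_2reg_ode}, \eqref{eq:lam_error_ode}, then close the gap perturbatively using~\eqref{eq:exp_bound} and Claim~\ref{cl:lambda_bounds} --- is exactly the paper's. However, your "collecting terms" step contains a sign-critical calculation error. Writing $\eta := (1+\delta)\varepsilon\lamerror(t)$, the expansion $\mathrm{e}^{-\lamover{\delta}(t)} = \mathrm{e}^{-\lambda(t)}\mathrm{e}^{-\eta} = \mathrm{e}^{-\lambda(t)}(1-\eta+O(\eta^2))$ applied to the bracket $1+\lambda(t)+\eta+\varepsilon\sum_k\frac{\lambda^k(t)}{k!}s_{k+1}+O(\varepsilon\eta)$ produces the cross term $-\eta(1+\lambda(t))$, whose $-\eta\lambda(t)$ part \emph{exactly cancels} the $-(1+\delta)\varepsilon\lambda(t)\lamerror(t)$ that appears in $\lamover{\delta}{}'(t)$ via~\eqref{eq:lam_error_ode}. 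After this cancellation the only surviving leading-order mismatch comes from the $\sum_k$ terms carrying coefficients $\varepsilon$ versus $(1+\delta)\varepsilon$, giving
\[
\lamp'(t)-\lamover{\delta}{}'(t) = -\delta\,\varepsilon\,\mathrm{e}^{-\lambda(t)}\sum_{k=2}^{\Delta-1}\frac{\lambda^k(t)}{k!}s_{k+1} + O(\varepsilon^2\,\cdot\,),
\]
not $-\delta\varepsilon\lamerror'(t)$ as you claim; your formula retains a spurious $+\delta\varepsilon\mathrm{e}^{-\lambda(t)}\lambda(t)\lamerror(t)$ of the \emph{same} order as the genuine term. This is precisely why the paper's good term in~\eqref{eq:lam_under_derivative_difference} and~\eqref{eq:lam_over_derivative_difference} is $\delta\varepsilon\,\mathrm{e}^{-\lambda(t)}\lambda^2(t)/2$ (using $s_3=1$ and $s_k\ge 0$ to lower-bound the sum by its $k=2$ term) rather than anything involving $\lamerror'(t)$.

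The error is not cosmetic: your subsequent sign argument rests on "since $\lamerror'(t)\ge 0$", and both justifications you offer for that inequality are flawed. The right-hand side of~\eqref{eq:lam_error_ode} is $\mathrm{e}^{-\lambda(t)}\bigl(-\lambda(t)\lamerror(t)+\sum_k\frac{\lambda^k(t)}{k!}s_{k+1}\bigr)$, which is emphatically \emph{not} obviously non-negative because of the $-\lambda(t)\lamerror(t)$ part (indeed, the proof of the upper bound in Claim~\ref{cl:properties_lamerror} hinges on the derivative being \emph{non-positive} when $\lamerror$ touches its barrier). And~\eqref{eq:lam_error_solution} writes $\lamerror(t)$ as $\lambda'(t)$ (a decreasing function) times an increasing integral, so the sign of the derivative is not visible "directly" either. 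The correct discrepancy $-\delta\varepsilon\,\mathrm{e}^{-\lambda(t)}\sum_k\frac{\lambda^k(t)}{k!}s_{k+1}$ is manifestly $\le -\delta\varepsilon\,\mathrm{e}^{-\lambda(t)}\lambda^2(t)/2 < 0$ for $t>0$, which is what makes the argument close without any sign ambiguity; the remaining $O(\varepsilon^2)$ error terms are then absorbed exactly as you anticipated, via~\eqref{eq:lam_error_bounds} and~\eqref{eq:lambda_universal_bounds}.
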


\begin{proof}
First, we prove the lower bound stated in \eqref{eq:lambda_p_bounds}. Since $\lamp(0) = \lamunder{\delta}(0) = 0$ (see \eqref{eq:lambda_p_ode_with_s}, \eqref{eq:lambda_under_over_def} and~\eqref{eq:lam_error_ode}), it is enough to prove that
\begin{equation}\label{eq:assumption_lam_under}
\text{if } \lamunder{\delta}(t)=\lamp(t) \text{ at some } t \in [0,\toverone], \text{ then we have } \lamunder{\delta}'(t) \le \lambda'_{\underline{p}}(t).
\end{equation}

For that purpose, observe that
\begin{align}
\begin{split}\label{eq:lam_under_ode}
\lamunder{\delta}'(t) \stackrel{\eqref{eq:lambda_under_over_def}}{=}&
\lambda'(t) + \left( 1-\delta \right) \varepsilon \cdot \lamerror'(t)
\stackrel{\eqref{eq:lambda_2reg_ode},\, \eqref{eq:lam_error_ode}}{=}\\
&\mathrm{e}^{-\lambda(t)} \cdot \left[1+\lambda(t) + (1-\delta)\varepsilon \cdot \left( -\lambda(t) \cdot \lamerror(t)+ \sum \limits_{k=2}^{\Delta-1} \frac{\lambda^k(t)}{k!} \cdot s_{k+1} \right)\right].
\end{split}
\end{align}

Therefore, if $\lamunder{\delta}(t)=\lamp(t)$ at some $t \in [0,\toverone]$, then we have
\begin{align}
\begin{split}\label{eq:lam_under_derivative_difference}
\lamp'(t)
\stackrel{\eqref{eq:lambda_p_ode_with_s}}{=}&
\mathrm{e}^{-\lamunder{\delta}(t)} \cdot \left( 1+\lamunder{\delta}(t)+ \varepsilon \cdot \sum \limits_{k=2}^{\Delta-1} \frac{\lamunder{\delta}^k(t)}{k!} \cdot s_{k+1} \right) 
\stackrel{\eqref{eq:lambda_under_over_def}, \, \eqref{eq:exp_bound}}{\ge}\\
&\mathrm{e}^{-\lambda(t)} \cdot \left(1-(1-\delta)\varepsilon \cdot \lamerror(t)\right) \left( 1+\lambda(t) + \left( 1-\delta \right) \varepsilon \cdot \lamerror(t)+ \varepsilon \cdot \sum \limits_{k=2}^{\Delta-1} \frac{\lambda^k(t)}{k!} \cdot s_{k+1} \right)
\stackrel{\eqref{eq:lam_under_ode}}{\ge}\\
&\lamunder{\delta}'(t)-\varepsilon^2 \cdot \mathrm{e}^{-\lambda(t)} \left( \lamerror^2(t) + \lamerror(t) \sum \limits_{k=2}^{\Delta-1} \frac{\lambda^k(t)}{k!} \cdot s_{k+1} \right)+
\delta \cdot \varepsilon \cdot\mathrm{e}^{-\lambda(t)} \cdot \frac{\lambda^2(t)}{2}.
\end{split}
\end{align}
Using~\eqref{eq:lam_error_bounds} and~\eqref{eq:lambda_universal_bounds}(b), inequality~\eqref{eq:lam_under_derivative_difference} implies~\eqref{eq:assumption_lam_under}, and hence we obtain the lower bound in~\eqref{eq:lambda_p_bounds}.

We prove the upper bound in~\eqref{eq:lambda_p_bounds} using a similar argument as in the case of the lower bound, i.e., we show that
\begin{equation}\label{eq:assumption_lam_over}
\text{if } \lamover{\delta}(t)=\lamp(t) \text{ at some } t \in [0,\toverone], \text{ then we have } \lamover{\delta}'(t) \ge \lambda'_{\underline{p}}(t).
\end{equation}

Analogously to~\eqref{eq:lam_under_ode} and~\eqref{eq:lam_under_derivative_difference}, using also~\eqref{eq:lambda_universal_bounds}(a) and~\eqref{eq:lambda_universal_bounds}(c), we obtain that if~${\lamover{\delta}(t) = \lamp(t)}$ at some ${t \in [0,\toverone]}$, then we have
\begin{equation}\label{eq:lam_over_derivative_difference}
\lambda'_{\underline{p}}(t) \le
\lamover{\delta}'(t)
+ \varepsilon^2 \cdot \mathrm{e}^{-\lambda(t)} \cdot \left[ \frac{2\lamerror(t)}{\lambda(t)} \cdot \sum \limits_{k=2}^{\Delta-1} \lambda^k(t)+8\lamerror^2(t)\cdot \ln \left( \frac{2}{\varepsilon}\right) \right]
- \delta \cdot \varepsilon \cdot \mathrm{e}^{-\lambda(t)} \cdot \frac{\lambda^2(t)}{2}.
\end{equation}

Similarly to the lower bound, \eqref{eq:lam_over_derivative_difference} implies \eqref{eq:assumption_lam_over}. This completes the proof of~\eqref{eq:lambda_p_bounds}.
\end{proof}

\subsubsection{Stochastic dominance}\label{sss:stoch_dom}

In Section~\ref{sss:stoch_dom} we prove Lemma~\ref{lem:stoch_dom}, thus we will need to study the functions~$\Herror(t)$, $\Hint(t)$, $\Herrorint(t)$, $\Hintunder{\delta}(t)$ and $\Hintover{\delta}(t)$ that were introduced in Definition~\ref{def:H_approx}.
In order to prove that we can define~$\tauunder{\delta}$ and~$\tauover{\delta}$ as in~\eqref{eq:tau_pm_def}, we need to show that their survival functions~$\Hintunder{\delta}(t)$ and~$\Hintover{\delta}(t)$ are monotone decreasing on the interval~$[0,\toverone]$. 
For the purpose of proving $\tauunder{\delta} \preccurlyeq \tau_{\underline{p}} \preccurlyeq \tauover{\delta}$, we will show in Lemma~\ref{lem:H_int_squeeze} that the corresponding survival functions~$\Hintunder{\delta}$ and $\Hintover{\delta}$ squeeze~$\Hint(t)$ on $[0, \toverone]$.
In order to achieve these goals, we need to study the functions~$\Herror(t)$ and~$\Herrorint(t)$. First, we give explicit formulas for them.

Recall the definition of the function~$\lamerror(t)$ from~\eqref{eq:lam_error_def}, the ODE~\eqref{eq:lam_error_ode} for~$\lamerror(t)$ and the explicit integral formula of~$\lamerror(t)$ from \eqref{eq:lam_error_solution}. Also recall the notation~$s_k$ from~\eqref{eq:sk_def}.

\begin{claim}[Explicit formulas for~$\Herror(t)$ and~$\Herrorint(t)$]\label{cl:H_error_explicit}
For any $\underline{r} \in \Rspace$, we have
\begin{align}
\Herror(t) &= \mathrm{e}^{-\lambda(t)} \cdot \left( \lamerror'(t) - \lambda'(t) \cdot \lamerror(t) + \lambda'(t) \cdot \sum \limits_{k=0}^{\Delta-2} \frac{\lambda^k(t)}{k!}r_{k+2}\right),\label{eq:H_error_formula}\\
\Herrorint(t) &= \mathrm{e}^{-\lambda(t)} \cdot \left( -\lamerror(t) + \sum \limits_{k=1}^{\Delta-2} \frac{\lambda^k(t)}{k!}s_{k+2}\right)\label{eq:H_error_int}.
\end{align}
\end{claim}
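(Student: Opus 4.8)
The plan is to derive both formulas from a single closed form for the survival function $\Hint(t)=\pr(\tau_{\underline p}\ge t)$, then to differentiate it first in $\varepsilon$ (to get $\Herrorint$) and afterwards in $t$ (to get $\Herror$).

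\emph{Step 1: a closed form for $\Hint$.} I would start from the description of $\tau_{\underline p}$ in Notation~\ref{not:Hp}: since $X_{\underline p}(t)\sim\mathrm{POI}(\lamp(t))$ is a unit-jump counting process independent of $D_{\underline p}\sim\underline p$, we have $\tau_{\underline p}>t$ precisely when $X_{\underline p}(t)\le D_{\underline p}-2$. Conditioning on $D_{\underline p}$, interchanging the two finite sums, and using~\eqref{eq:qk_def} gives
\begin{equation}\label{eq:plan_I_formula}
\Hint(t)=\sum_{d=2}^{\Delta}p_d\sum_{j=0}^{d-2}\mathrm{e}^{-\lamp(t)}\frac{\lamp^j(t)}{j!}=\mathrm{e}^{-\lamp(t)}\sum_{k=0}^{\Delta-2}\frac{\lamp^k(t)}{k!}\,q_{k+2}=\mathrm{e}^{-\lamp(t)}\Bigl(1+\varepsilon\sum_{k=1}^{\Delta-2}\frac{\lamp^k(t)}{k!}\,s_{k+2}\Bigr),
\end{equation}
where the last step uses $q_2=1$ together with $q_{k+2}=\varepsilon s_{k+2}$ for $k\ge1$ (which holds for $\underline p=\pfromr$). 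At $\varepsilon=0$ this recovers $I(t)=\mathrm{e}^{-\lambda(t)}$, matching~\eqref{eq:I(t)_explicit}.

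\emph{Step 2: the formula for $\Herrorint$.} The solution $\lambda_{\pfromr}(t)$ of~\eqref{eq:lambda_p_ode_with_s} depends smoothly on $\varepsilon$, locally uniformly in $t$, with $\partial_\varepsilon\lambda_{\pfromr}(t)\big|_{\varepsilon=0}=\lamerror(t)$ by~\eqref{eq:lam_error_def}, hence so does $I_{\pfromr}(t)=\Hint(t)$ via~\eqref{eq:plan_I_formula}. Writing $\widetilde{\mathcal I}_{\underline r}(t):=\partial_\varepsilon I_{\pfromr}(t)\big|_{\varepsilon=0}$ and recalling $H_{\pfromr}=-I_{\pfromr}'$, the commuting mixed derivatives give $\widetilde{\mathcal I}_{\underline r}'(t)=-\Herror(t)$, and $\widetilde{\mathcal I}_{\underline r}(t)\to0$ as $t\to\infty$ because differentiating~\eqref{eq:plan_I_formula} in $\varepsilon$ exhibits $\widetilde{\mathcal I}_{\underline r}(t)$ as $\mathrm{e}^{-\lambda(t)}$ times a polynomial in $\lambda(t)$ and $\lamerror(t)$, with $\lamerror(t)$ polynomially bounded in $\lambda(t)$ by~\eqref{eq:lam_error_bounds} while $\lambda(t)\ge\ln(t+1)\to\infty$ by~\eqref{eq:lam_grows_logarithmic}. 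Hence $\Herrorint(t)=\int_t^\infty\Herror(s)\,\mathrm ds=-\int_t^\infty\widetilde{\mathcal I}_{\underline r}'(s)\,\mathrm ds=\widetilde{\mathcal I}_{\underline r}(t)$, so it only remains to differentiate~\eqref{eq:plan_I_formula} in $\varepsilon$ at $\varepsilon=0$: the first summand contributes $-\lamerror(t)\,\mathrm{e}^{-\lambda(t)}$ and, by the product rule, the second contributes $\mathrm{e}^{-\lambda(t)}\sum_{k=1}^{\Delta-2}\frac{\lambda^k(t)}{k!}s_{k+2}$, which together give exactly~\eqref{eq:H_error_int}.

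\emph{Step 3: the formula for $\Herror$, and the main obstacle.} Since $\Herror=-\Herrorint'$, I would differentiate~\eqref{eq:H_error_int} in $t$: using $s_2=0$ to rewrite $\sum_{k=1}^{\Delta-2}\frac{\lambda^k}{k!}s_{k+2}$ as $\sum_{k=0}^{\Delta-2}\frac{\lambda^k}{k!}s_{k+2}$, differentiating (the $t$-derivative of the latter sum equals $\lambda'(t)\sum_{k=0}^{\Delta-2}\frac{\lambda^k(t)}{k!}s_{k+3}$ after an index shift and using $s_{\Delta+1}=0$), and applying the telescoping identity $r_{k+2}=s_{k+2}-s_{k+3}$ from~\eqref{eq:sk_def} to combine the two sums into $\sum_{k=0}^{\Delta-2}\frac{\lambda^k(t)}{k!}r_{k+2}$, one is left, after collecting the remaining terms, precisely with~\eqref{eq:H_error_formula}. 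The only genuinely non-routine point of the whole argument is the smooth-dependence and interchange step in Step~2 (that $\Herror$ is a bona fide directional derivative and that $\int_t^\infty\Herror=\widetilde{\mathcal I}_{\underline r}(t)$), which is handled by the standard smooth dependence of ODE solutions on parameters together with the uniform lower bound $\lamp(t)\ge\ln(t+1)$; everything else is bookkeeping with finite sums.
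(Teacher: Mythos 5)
Your proof is correct, but it runs in the opposite direction from the paper's. The paper starts from the known density formula $\Hp(t) = \lamp'(t)\mathrm{e}^{-\lamp(t)} \sum_{k=0}^{\Delta-2} \frac{\lamp^k(t)}{k!} p_{k+2}$ (citing \cite[Proof of Lemma~4.8]{RSW}), differentiates it in $\varepsilon$ at $\varepsilon=0$ to obtain \eqref{eq:H_error_formula} directly, and then verifies \eqref{eq:H_error_int} by the ``same derivative plus agreement at one point'' trick --- but anchored at $t=0$, where $\Herrorint(0)=0$ is immediate from \eqref{eq:H_error_int_at_0}. You instead re-derive the survival-function formula \eqref{eq:H_p_int_with_lambda} from the probabilistic definition of $\tau_{\underline p}$ (which the paper also records, but a section later and by citation), differentiate it in $\varepsilon$ to get \eqref{eq:H_error_int} first, and then differentiate in $t$ to recover \eqref{eq:H_error_formula}. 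Your identification $\Herrorint=\widetilde{\mathcal I}_{\underline r}$ uses the same ``equal derivatives plus one anchor'' idea, but anchored at $t=\infty$, which forces you to supply a decay estimate for $\widetilde{\mathcal I}_{\underline r}(t)$ (via \eqref{eq:lam_error_bounds} and \eqref{eq:lam_grows_logarithmic}) where the paper gets the anchor for free at $t=0$. Both routes are valid; yours is slightly more self-contained (you re-derive the $\Hint$ formula rather than citing it), at the cost of the extra decay argument and an explicit appeal to smooth dependence of ODE solutions on parameters to justify interchanging $\partial_\varepsilon$ and $\partial_t$ --- a step the paper's ordering (differentiate the $\Hp$ formula, then integrate) sidesteps. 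The algebra in your Step~3 (extending the sum via $s_2=0$, index-shifting, and telescoping $r_{k+2}=s_{k+2}-s_{k+3}$, using $s_{\Delta+1}=0$) checks out.
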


Before we prove Claim~\ref{cl:H_error_explicit}, let us note that by~\eqref{eq:lam_error_bounds}, equation~\eqref{eq:H_error_int} shows that $\Herrorint(t)$ is a strictly positive function on~$\mathbb{R}_+$, and that this property will be crucial in the proof of the stochastic dominance.

\begin{remark}[Probabilistic interpretation of~\eqref{eq:H_error_int}]
Before we rigorously prove~\eqref{eq:H_error_int}, we briefly sketch its probabilistic meaning. Let~$\tau_{\underline{p}}$, $X_{\underline{p}}(t)$ and $D_{\underline{p}}$ be as in Notation~\ref{not:Hp} for the distribution ${\underline{p}=\pfromr}$ defined in Notation~\ref{not:p(eps,r)} with some small~$\varepsilon>0$. Then, by Notation~\ref{not:Hp} and Definition~\ref{def:H_approx}, we have ${\Herrorint(t) = \frac{\mathrm{d}}{\mathrm{d}\varepsilon} \pr (\tau_{\underline{p}} \ge t) \big|_{\varepsilon=0}}$. Observe that if $\varepsilon$ is small enough, then the event $\{\tau_{\underline{p}} \ge t\}$ is approximately the union of the events $\{X_{\underline{p}}(t) < 1 \}$ and $\{ X_{\chi_2}(t) < D_{\underline{p}}-1 \}$. Now calculating the probabilities of these two events, differentiating them w.r.t.~$\varepsilon$ and substituting ${\varepsilon=0}$, we obtain \eqref{eq:H_error_int}.
\end{remark}

\begin{proof}[Proof of Claim~\ref{cl:H_error_explicit}]
It was shown in \cite[Proof of Lemma~4.8]{RSW} that for any $\underline{p} \in \nonreg$, we have
\begin{equation}\label{eq:H_eps}
\Hp(t) = \lamp'(t)\mathrm{e}^{-\lamp(t)} \cdot \sum \limits_{k=0}^{\Delta-2} \frac{\lamp^k(t)}{k!} \cdot p_{k+2} \stackrel{\eqref{eq:distribution_with_eps}}{=} \lamp'(t)\mathrm{e}^{-\lamp(t)} \cdot \left( 1 + \varepsilon \cdot \sum \limits_{k=0}^{\Delta-2} \frac{\lamp^k(t)}{k!} \cdot r_{k+2}\right).
\end{equation}

By differentiating \eqref{eq:H_eps} w.r.t.~$\varepsilon$ and substituting $\varepsilon=0$, we obtain \eqref{eq:H_error_formula}.

Now we prove \eqref{eq:H_error_int}. Since equation \eqref{eq:H_error_int} is satisfied at $t=0$ (cf.~\eqref{eq:H_error_int_at_0}, \eqref{eq:lambda_2reg_ode} and \eqref{eq:lam_error_ode}), it is enough to prove that the derivatives of the two sides of \eqref{eq:H_error_int} are equal. We know that $\Herrorint'(t) = -\Herror(t)$ 
by the definition of~$\Herrorint$. On the other hand, the derivative of the right-hand side of \eqref{eq:H_error_int} is also $-\Herror(t)$ by \eqref{eq:H_error_formula}. These considerations imply \eqref{eq:H_error_int}.
\end{proof}

In order to prove the stochastic dominance stated in Lemma~\ref{lem:stoch_dom}, we use that the functions~$\Hintunder{\delta}(t)$ and~$\Hintover{\delta}(t)$ squeeze~$\Hint(t)$. More precisely, we show the following lemma.

\begin{lemma}[Bounds on $\Hint$]\label{lem:H_int_squeeze}
For any $\delta \in (0,1)$, there exists $\varepsilon_0>0$ such that for any $\underline{p} \in \nearreg{\varepsilon_0}$, we have
\begin{equation}\label{eq:H_int_bounds}
\Hintunder{\delta}(t) \le \Hint(t) \le \Hintover{\delta}(t) \qquad \text{for any} \qquad t \le \toverone.
\end{equation}
\end{lemma}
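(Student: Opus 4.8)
The plan is to reduce \eqref{eq:H_int_bounds} to an inequality that only involves $\lambda$, $\lamerror$ and $\varepsilon$, by combining an explicit closed form for $\Hint$ with the bounds on $\lamp$ from Lemma~\ref{lem:lambda_p_squeeze}. First I would compute $\Hint$ explicitly from the Poisson representation in Notation~\ref{not:Hp}: writing $\Hint(t) = \pr(X_{\underline p}(t) \le D_{\underline p}-2) = \sum_{d} p_d \sum_{j=0}^{d-2} \mathrm{e}^{-\lamp(t)}\lamp^j(t)/j!$ and inserting $p_d = \indic{d=2} + \varepsilon r_d$ (using $\sum_{d\ge 2} r_d = 0$, cf.~\eqref{eq:eps_rk_def}) gives, with $(s_k)$ the tail of $\underline r$ from~\eqref{eq:sk_def},
\begin{equation*}
\Hint(t) = \mathrm{e}^{-\lamp(t)}\Big(1 + \varepsilon\sum_{k=1}^{\Delta-2}\frac{\lamp^k(t)}{k!}\,s_{k+2}\Big), \qquad t \in \mathbb{R}_+
\end{equation*}
(one may also verify this by differentiating and comparing with~\eqref{eq:H_eps}). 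The map $x \mapsto \mathrm{e}^{-x}\big(1+\varepsilon\sum_{k=1}^{\Delta-2}\tfrac{x^k}{k!}s_{k+2}\big)$ is strictly decreasing on $[0,3\ln(2/\varepsilon)]$ once $\varepsilon$ is small enough (its derivative has the sign of $\varepsilon\sum_{k=1}^{\Delta-2}\tfrac{x^{k-1}}{(k-1)!}s_{k+2}-1 < 0$, since $\varepsilon(\ln\tfrac1\varepsilon)^{\Delta-3}\to 0$). Since Lemma~\ref{lem:lambda_p_squeeze} gives $\lamunder{\delta}(t)\le\lamp(t)\le\lamover{\delta}(t)$ on $[0,\toverone]$, and all three lie in $[0,3\ln(2/\varepsilon)]$ by \eqref{eq:lambda_universal_bounds}(a), monotonicity reduces the claim to the two explicit inequalities
\begin{equation*}
\mathrm{e}^{-\lamunder{\delta}(t)}\Big(1+\varepsilon\sum_{k}\tfrac{\lamunder{\delta}^k(t)}{k!}s_{k+2}\Big) \le \Hintover{\delta}(t), \qquad
\mathrm{e}^{-\lamover{\delta}(t)}\Big(1+\varepsilon\sum_{k}\tfrac{\lamover{\delta}^k(t)}{k!}s_{k+2}\Big) \ge \Hintunder{\delta}(t),
\end{equation*}
for $t\le\toverone$, where by Claim~\ref{cl:H_error_explicit} (equation~\eqref{eq:H_error_int}) the right-hand sides equal $\mathrm{e}^{-\lambda(t)}\big(1+(1\pm\delta)\varepsilon(\sum_k\tfrac{\lambda^k(t)}{k!}s_{k+2}-\lamerror(t))\big)$.

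To prove the first of these I would divide by $\mathrm{e}^{-\lambda(t)}$, substitute $\lamunder{\delta}(t)=\lambda(t)+(1-\delta)\varepsilon\lamerror(t)$, bound $\mathrm{e}^{-(1-\delta)\varepsilon\lamerror(t)}$ via~\eqref{eq:exp_bound} and $\sum_k \tfrac{\lamunder{\delta}^k(t)}{k!}s_{k+2}$ via the elementary estimate of Claim~\ref{cl:lambda_bounds}(c). Cancelling the leading terms $1+\varepsilon\sum_k\tfrac{\lambda^k(t)}{k!}s_{k+2}-(1-\delta)\varepsilon\lamerror(t)$, the inequality to be checked becomes
\begin{equation*}
\delta\,\varepsilon\Big(\sum_{k=1}^{\Delta-2}\frac{\lambda^k(t)}{k!}\,s_{k+2} - 2\lamerror(t)\Big) \;\ge\; R(t),
\end{equation*}
where $R(t)$ collects the Taylor remainders; using $\lambda(t)\le 2\ln(2/\varepsilon)$ on $[0,\toverone]$ (cf.~\eqref{eq:lam_tover_bound}), $\varepsilon\lamerror(t)=O(\delta)$, and the fact that every sum occurring has length $\le\Delta$, one gets $R(t) = O\big(\varepsilon^2\,\mathrm{poly}(\ln\tfrac1\varepsilon)\big)$ uniformly in $\underline r\in\Rspace$, with the extra property that $R(t)$ carries a factor vanishing together with $\lamerror(t)$ (hence with $\lambda(t)$) as $t\to 0$. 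The left-hand bracket is $\ge 0$ by~\eqref{eq:lam_error_bounds}. The second inequality is handled symmetrically, with $\lamover{\delta}$ in place of $\lamunder{\delta}$ and $\Hintunder{\delta}$ in place of $\Hintover{\delta}$; the slack there is again $\delta\varepsilon\big(\sum_k\tfrac{\lambda^k(t)}{k!}s_{k+2}-2\lamerror(t)\big)$, nonnegative by~\eqref{eq:lam_error_bounds}.

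\textbf{The main obstacle} is that the slack $\sum_{k=1}^{\Delta-2}\tfrac{\lambda^k(t)}{k!}s_{k+2}-2\lamerror(t)$ degenerates to $0$ at $t=0$, so absorbing $R(t)$ uniformly over $t\in[0,\toverone]$ \emph{and} over $\underline r\in\Rspace$ (recall $\varepsilon_0$ in Lemma~\ref{lem:H_int_squeeze} may not depend on $\underline p$) requires a quantitative strengthening of~\eqref{eq:lam_error_bounds}, of the shape $\sum_{k=1}^{\Delta-2}\tfrac{\lambda^k(t)}{k!}s_{k+2}-2\lamerror(t)\ge c_\Delta\min(\lambda(t),1)$. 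I expect to obtain this in two steps: (i) $\lamerror(t)$ is an \emph{affine} function of $\underline r$ (immediate from the integral formula~\eqref{eq:lam_error_solution}), so it suffices to bound the slack from below in the finitely many extremal directions $\underline r = \chi_d$, $d=3,\dots,\Delta$; (ii) for each such direction the slack can be analysed directly --- e.g.\ it equals $t\lambda'(t)$ in the direction $\chi_3$ --- and by the same forbidden-region argument as in the proof of~\eqref{eq:lam_error_bounds} (the bracket is $0$ at $t=0$ with derivative bounded below near $0$, and one checks it stays bounded away from $0$ for $t\ge t_0$ while behaving like $\lambda(t)$ for $t\le t_0$). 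Granting this, near $t=0$ one has slack $\gtrsim\delta\varepsilon\lambda(t)$ against $R(t) = O(\varepsilon^2\lamerror(t)\,\mathrm{poly}(\ln\tfrac1\varepsilon))$ with $\lamerror(t)=O(\lambda(t)^3)$, and for $t$ bounded away from $0$ one has slack $\gtrsim\delta\varepsilon$ against $R(t) = O(\varepsilon^2\,\mathrm{poly}(\ln\tfrac1\varepsilon))$; in both regimes the ratio tends to $0$, which fixes $\varepsilon_0 = \varepsilon_0(\delta,\Delta)$. Everything else is routine bookkeeping of finitely many $O\big(\varepsilon^2\,\mathrm{poly}(\ln\tfrac1\varepsilon)\big)$ terms.
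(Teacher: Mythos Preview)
Your overall strategy matches the paper's: compute $\Hint(t)=\Hintlam(\lamp(t))$ explicitly via the Poisson representation (the paper does this in~\eqref{eq:H_p_int_with_lambda}), show $\Hintlam$ is monotone decreasing on the relevant range (Claim~\ref{cl:H_int_lam_prop}), and then feed in the squeeze $\lamunder{\delta}\le\lamp\le\lamover{\delta}$ from Lemma~\ref{lem:lambda_p_squeeze}. Your expansion in $\varepsilon$ and the cancellation down to the slack $\delta\varepsilon\big(\sum_k\frac{\lambda^k}{k!}s_{k+2}-2\lamerror\big)$ versus an $O(\varepsilon^2\,\mathrm{poly})$ remainder carrying a factor $\lamerror$ is exactly the content of the paper's display~\eqref{eq:H_int_lower_close}.

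Where you diverge from the paper is in how you absorb the remainder. You propose to strengthen~\eqref{eq:lam_error_bounds} to a quantitative lower bound $\sum_k\frac{\lambda^k}{k!}s_{k+2}-2\lamerror\ge c_\Delta\min(\lambda,1)$, and your sketch (reduce by linearity to the extremal directions $\underline r=\chi_d$, then analyse each; e.g.\ in direction $\chi_3$ the slack equals $t\lambda'(t)$ exactly) is correct and would go through. The paper instead sidesteps this entirely with a ``halve-$\delta$'' trick: it applies Lemma~\ref{lem:lambda_p_squeeze} with parameter $\delta/2$ rather than $\delta$, and exploits the convexity of $\Hintlam$ (also proved in Claim~\ref{cl:H_int_lam_prop}) together with the derivative bound $\Hintlam'(\lambda)\le-\tfrac12\mathrm{e}^{-\lambda}$ to extract an additional gain
\[
\Hintlam(\lamover{\delta/2}(t))-\Hintlam(\lamover{\delta}(t))\;\ge\;\tfrac18\,\delta\,\varepsilon\,\lamerror(t)\,\mathrm{e}^{-\lambda(t)}
\]
(equation~\eqref{eq:H_int_lower_Taylor}). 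This gain is \emph{directly proportional to} $\lamerror(t)$, so it cancels the $\lamerror(t)$ factor in the remainder without any appeal to a strengthened~\eqref{eq:lam_error_bounds}; the inequality then reduces to $\delta\gtrsim\varepsilon\sum_k\lambda^k$, which is just~\eqref{eq:lambda_universal_bounds}(b). In short: your route works but costs you an auxiliary lemma; the paper's $\delta\!\to\!\delta/2$ device manufactures the needed slack out of the squeeze on $\lamp$ itself and is noticeably cleaner.
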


Before proving Lemma~\ref{lem:H_int_squeeze}, we derive Lemma~\ref{lem:stoch_dom} from it.

\begin{proof}[Proof of Lemma~\ref{lem:stoch_dom}]
First, we show that we can define $\tauunder{\delta}$ and $\tauover{\delta}$ as in \eqref{eq:tau_pm_def}. Observe that both $\Hintunder{\delta}(t)$ and $\Hintover{\delta}(t)$ are equal to $1$ at $t=0$ (cf.\ \eqref{eq:H_int_under_over_def}, \eqref{eq:I(t)_explicit}, \eqref{eq:H_error_int_at_0}). Therefore, in order to show that we can define~$\tauunder{\delta}$ and $\tauover{\delta}$ as in \eqref{eq:tau_pm_def}, it is enough to prove that if $\varepsilon>0$ is small enough, then $\Hintunder{\delta}(t)$ and $\Hintover{\delta}(t)$ are monotone decreasing in $t$ on the interval $[0,\toverone]$. By Definition~\ref{def:H_approx}, we obtain that
\begin{equation}\label{eq:H_int_bounds_derivatives}
\Hintunder{\delta}'(t) = -\mathrm{e}^{-\lambda(t)} \cdot \left( \lambda'(t) + (1-\delta)\varepsilon \cdot \Herror(t) \right)
\quad \text{and} \quad
\Hintover{\delta}'(t) = -\mathrm{e}^{-\lambda(t)} \cdot \left( \lambda'(t) + (1+\delta)\varepsilon \cdot \Herror(t) \right).
\end{equation}

By \eqref{eq:H_error_formula}, \eqref{eq:lam_error_ode}, \eqref{eq:lam_error_bounds} and \eqref{eq:lambda_universal_bounds}(b), we obtain that the functions in \eqref{eq:H_int_bounds_derivatives} are negative on the interval~${[0,\toverone]}$ if $\varepsilon$ is small enough, i.e., the functions~$\Hintunder{\delta}(t)$ and~$\Hintover{\delta}(t)$ are monotone decreasing.

Finally, we prove that even~\eqref{eq:stoch_dom} can be satisfied. Note that~$\tauunder{\delta}$ can be defined such that
\begin{equation}\label{eq:tau_minus_def_at_toverone}
\pr \left( \tauunder{\delta} = \toverone \right) =\Hintunder{\delta}(\toverone),
\quad\text{i.e.,}\quad
\pr \left( \tauunder{\delta} > \toverone \right) =0.
\end{equation}

Moreover, by~\eqref{eq:H_int_bounds}, $\tauover{\delta}$ can be defined such that
\begin{equation}\label{eq:tau_plus_def_at_toverone}
\pr \left( \tauover{\delta} = \toverone \right) =\Hintover{\delta}(\toverone)-\Hint(\toverone) 
\qquad \text{and} \qquad
\pr \left( \tauover{\delta} \ge t \right) =\Hint(t) \quad \text{for any} \quad t > \toverone.
\end{equation}

Combining~\eqref{eq:H_int_bounds} with \eqref{eq:tau_pm_def}, \eqref{eq:tau_minus_def_at_toverone} and \eqref{eq:tau_plus_def_at_toverone} yields $\mathbb{P}(\tauunder{\delta}\geq t) \leq \mathbb{P}(\tau_{\underline{p}}\geq t) \leq \mathbb{P}(\tauover{\delta}\geq t)$ for all $t \geq 0$, i.e., the stochastic dominance relations~\eqref{eq:stoch_dom} hold.
\end{proof}

It remains to prove Lemma~\ref{lem:H_int_squeeze}. For that purpose, we study the function $\Hint(t)$, i.e., the survival function of $\tau_{\underline{p}}$. Recall~$q_k$ from~\eqref{eq:qk_def}. Note that it was shown in \cite[Lemma~4.8~(b)]{RSW} that for any $\underline{p} \in \Pspace$, we have
\begin{equation}\label{eq:H_p_int_with_lambda}
\Hint(t) = 1- \int \limits_0^t \Hp(s)\, \mathrm{d}s = \Hintlam(\lamp(t)),
\text{ where }
\Hintlam(\lambda) := \sum \limits_{k=0}^{\Delta-2} \mathrm{e}^{-\lambda} \frac{\lambda^k}{k!} \cdot q_{k+2} = \mathrm{e}^{-\lambda} \left( 1+ \varepsilon \sum \limits_{k=1}^{\Delta-2} \frac{\lambda^k}{k!} \cdot s_{k+2} \right).
\end{equation}

Observe that $\Hintlam(\lambda) = \ev (q_{X_\lambda+2})$, where $X_\lambda \sim \text{POI}(\lambda)$.

\begin{claim}[Properties of the function $\Hintlam$]\label{cl:H_int_lam_prop}
There exists $\varepsilon_0>0$ such that for any $\underline{p}\in \nearreg{\varepsilon_0}$ the function~$\Hintlam(\lambda)$ is monotone decreasing on $\mathbb{R}_+$ and convex on the interval $\left[0, \sqrt[\Delta]{1/(2\varepsilon \Delta)} \right]$, where $\varepsilon:=1-p_2$. Moreover,
\begin{equation}\label{eq:Hintlam_diff_bound}
\Hintlam'(\lambda) \le -\frac12 \mathrm{e}^{-\lambda} \qquad \text{for any} \qquad \lambda \in \mathbb{R}_+.
\end{equation}
\end{claim}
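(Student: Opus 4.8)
The plan is to read off all three assertions by differentiating the explicit expression~\eqref{eq:H_p_int_with_lambda} for $\Hintlam$. First I would differentiate $\Hintlam(\lambda)=\mathrm{e}^{-\lambda}\sum_{k=0}^{\Delta-2}\frac{\lambda^k}{k!}q_{k+2}$ term by term; after a shift of the summation index and the identity $q_{k+2}-q_{k+3}=p_{k+2}$ (valid for all $k=0,\dots,\Delta-2$ by the convention $q_k=0$ for $k>\Delta$, cf.~\eqref{eq:qk_def}), everything telescopes to
\begin{equation*}
\Hintlam'(\lambda)=-\mathrm{e}^{-\lambda}\sum_{k=0}^{\Delta-2}\frac{\lambda^k}{k!}\,p_{k+2},
\end{equation*}
which is consistent with $\Hint(t)=\Hintlam(\lamp(t))$ and formula~\eqref{eq:H_eps} for $\Hp$ (it may also be obtained from $\Hintlam(\lambda)=\ev(q_{X_\lambda+2})$ with $X_\lambda\sim\mathrm{POI}(\lambda)$ and $\tfrac{\mathrm{d}}{\mathrm{d}\lambda}\ev(f(X_\lambda))=\ev\bigl(f(X_\lambda+1)-f(X_\lambda)\bigr)$). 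Since every $p_{k+2}\ge0$ and the $k=0$ summand equals $p_2=1-\varepsilon$, this gives $\Hintlam'(\lambda)\le-(1-\varepsilon)\mathrm{e}^{-\lambda}$ for all $\lambda\in\mathbb{R}_+$; in particular $\Hintlam$ is strictly decreasing on $\mathbb{R}_+$, and any $\varepsilon_0\le\tfrac12$ forces~\eqref{eq:Hintlam_diff_bound}.

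For convexity I would use the factorization $\Hintlam(\lambda)=\mathrm{e}^{-\lambda}g(\lambda)$ with $g(\lambda):=1+\varepsilon\sum_{k=1}^{\Delta-2}\frac{\lambda^k}{k!}s_{k+2}$ coming directly from~\eqref{eq:H_p_int_with_lambda} and~\eqref{eq:sk_def}, which yields
\begin{equation*}
\Hintlam''(\lambda)=\mathrm{e}^{-\lambda}\bigl(g(\lambda)-2g'(\lambda)+g''(\lambda)\bigr),\qquad g'(\lambda)=\varepsilon\sum_{j=0}^{\Delta-3}\frac{\lambda^j}{j!}s_{j+3},\qquad g''(\lambda)=\varepsilon\sum_{i=0}^{\Delta-4}\frac{\lambda^i}{i!}s_{i+4}.
\end{equation*}
Because $s_d=\sum_{\ell=d}^{\Delta}r_\ell\ge0$ for every $d\ge3$ (recall $\underline{r}\in\Rspace$, so $r_\ell\ge0$ for $\ell\ge3$), we have $g(\lambda)\ge1$ and $g''(\lambda)\ge0$ on all of $\mathbb{R}_+$; hence it will suffice to prove $g'(\lambda)\le\tfrac12$ on $\bigl[0,(2\varepsilon\Delta)^{-1/\Delta}\bigr]$, since then $\Hintlam''(\lambda)\ge\mathrm{e}^{-\lambda}(1-1+0)=0$ there. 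Using $s_{j+3}\le s_3=1$ I would bound $g'(\lambda)\le\varepsilon\sum_{j=0}^{\Delta-3}\frac{\lambda^j}{j!}$ and split into two regimes: for $0\le\lambda\le1$ the sum is at most $\mathrm{e}$, so $g'(\lambda)\le\mathrm{e}\varepsilon$; for $1<\lambda\le(2\varepsilon\Delta)^{-1/\Delta}$ we have $\frac{\lambda^j}{j!}\le\lambda^{\Delta-3}\le\lambda^\Delta\le\tfrac{1}{2\varepsilon\Delta}$ for each of the at most $\Delta$ indices $j$, so $g'(\lambda)\le\varepsilon\cdot\Delta\cdot\tfrac{1}{2\varepsilon\Delta}=\tfrac12$. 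Taking $\varepsilon_0:=\tfrac{1}{2\Delta}$ then does everything: it makes $(2\varepsilon\Delta)^{-1/\Delta}>1$ so that the two regimes cover the interval, it makes $\mathrm{e}\varepsilon_0=\tfrac{\mathrm{e}}{2\Delta}\le\tfrac12$ (using $\Delta\ge3$), and it is $\le\tfrac12$ as needed for~\eqref{eq:Hintlam_diff_bound} and for the monotonicity.

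All the individual estimates are completely routine; the only point needing a little care is to fix a single $\varepsilon_0$ that simultaneously (i) forces the endpoint $(2\varepsilon\Delta)^{-1/\Delta}$ above $1$, so that the crude case split $\lambda\le1$ versus $\lambda>1$ actually covers the interval on which convexity is asserted, and (ii) keeps $\mathrm{e}\varepsilon_0\le\tfrac12$; the value $\varepsilon_0=\tfrac{1}{2\Delta}$ works for all $\Delta\ge3$.
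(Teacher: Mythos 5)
Your proof is correct and follows essentially the same route as the paper: both differentiate the explicit formula~\eqref{eq:H_p_int_with_lambda} and read off monotonicity, the derivative bound, and convexity from the signs of the resulting terms. Your identity $\Hintlam'(\lambda)=-\mathrm{e}^{-\lambda}\sum_{k=0}^{\Delta-2}\frac{\lambda^k}{k!}p_{k+2}$ is just the $p$-form of the paper's $r$-form $\Hintlam'(\lambda)=-\mathrm{e}^{-\lambda}\bigl(1+\varepsilon\sum_{k=0}^{\Delta-2}\frac{\lambda^k}{k!}r_{k+2}\bigr)$, and the monotonicity and~\eqref{eq:Hintlam_diff_bound} arguments are identical in substance. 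For convexity, the paper writes $\Hintlam''(\lambda)=\mathrm{e}^{-\lambda}\bigl(1+\varepsilon\sum_{k=0}^{\Delta-2}\frac{\lambda^k}{k!}(r_{k+2}-r_{k+3})\bigr)$ and makes the whole error term $\le 1$ using $|r_{k+2}-r_{k+3}|\le 2$ and $2\varepsilon\sum_{k=0}^{\Delta-2}\lambda^k\le 1$ on the stated interval, whereas you write $\Hintlam''=\mathrm{e}^{-\lambda}(g-2g'+g'')$, note $g\ge1$ and $g''\ge0$ for free (signs of the $s_k$), and only need to control $g'\le\tfrac12$. This is the same computation rearranged: your split isolates the single sign-indefinite contribution $-2g'$ and so is marginally cleaner, but the threshold $\varepsilon_0\approx 1/(2\Delta)$ and the $\lambda\le1$ versus $\lambda>1$ case analysis match the paper's. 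No gaps.
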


\begin{proof}
By differentiating \eqref{eq:H_p_int_with_lambda} w.r.t.~$\lambda$, and using that $s_2=s_{\Delta+1}=r_{\Delta+1}=0$ (see Definition~\ref{def:deg_distr_representation} and~\eqref{eq:sk_def}),
we obtain
\begin{equation}\label{eq:H_int_lam_derivatives}
\Hintlam'(\lambda) = -\mathrm{e}^{-\lambda} \left( 1+ \varepsilon \sum \limits_{k=0}^{\Delta-2} \frac{\lambda^k}{k!} \cdot r_{k+2} \right)
\quad \text{and} \quad
\Hintlam''(\lambda) = \mathrm{e}^{-\lambda} \left( 1+ \varepsilon \sum \limits_{k=0}^{\Delta-2} \frac{\lambda^k}{k!} \cdot (r_{k+2}-r_{k+3}) \right).
\end{equation}

In the formula for $\Hintlam'(\lambda)$ only the term corresponding to $k=0$ is positive (since $r_2=-1$ but $r_k \geq 0$ if $k \geq 3$, cf.~\eqref{eq:R_space}), hence if $\varepsilon <1$, then $\Hintlam'(\lambda) <0$ for any $\lambda \in \mathbb{R}_+$, i.e., $\Hintlam(\lambda)$ is monotone decreasing. Moreover, if $\varepsilon \le 1/2$, then \eqref{eq:Hintlam_diff_bound} also holds.

In order to show the convexity of $\Hintlam(\lambda)$, observe that for any ${\varepsilon \le 1/(2\Delta)}$ and ${\lambda \in [0, \sqrt[\Delta]{1/(2\varepsilon \Delta)}]}$, we have ${2\varepsilon\sum_{k=0}^{\Delta-2}\lambda^k \le 1}$. Hence, by~\eqref{eq:H_int_lam_derivatives}, $\Hintlam(\lambda)$ is convex on the interval~${[0, \sqrt[\Delta]{1/(2\varepsilon \Delta)}]}$ if~$\varepsilon$ is small enough.
\end{proof}

Now we can conclude the proof of Lemma~\ref{lem:H_int_squeeze}, i.e., that $\Hintunder{\delta}$ and $\Hintover{\delta}$ squeeze $\Hint(t)$ on $[0, \toverone]$.
In the proof we will make use of Lemma~\ref{lem:lambda_p_squeeze}, i.e., that $\lamunder{\delta}$ and $\lamover{\delta}$ squeeze $\lamp$ on $[0,\toverone]$.

\begin{proof}[Proof of Lemma~\ref{lem:H_int_squeeze}]
Let us fix $\delta \in (0,1)$. First, we show the lower bound for $\Hint(t)$. We have
\begin{align}
\begin{split}\label{eq:H_int_lower_close}
\Hintlam(\lamover{\delta}(t)) 
\stackrel{\substack{\eqref{eq:lambda_under_over_def}, \, \eqref{eq:H_p_int_with_lambda},\\ \eqref{eq:exp_bound}}}{\ge}
&\mathrm{e}^{-\lambda(t)} \cdot \left( 1 -(1+\delta)\varepsilon\lamerror(t)\right) \left(1+ \varepsilon \sum \limits_{k=1}^{\Delta-2} \frac{\lambda^k(t)}{k!}s_{k+2}\right)
\stackrel{\substack{\eqref{eq:H_int_under_over_def}, \, \eqref{eq:I(t)_explicit},\\ \eqref{eq:H_error_int}}}{=}\\
&\Hintunder{\delta}(t)+\mathrm{e}^{-\lambda(t)} \cdot \left[\delta \cdot \varepsilon \left( \sum \limits_{k=1}^{\Delta-2} \frac{\lambda^k(t)}{k!}s_{k+2} - 2\lamerror(t) \right)
- 2 \varepsilon^2 \lamerror(t) \cdot \sum \limits_{k=1}^{\Delta-2} \frac{\lambda^k(t)}{k!}s_{k+2} \right]
\stackrel{\eqref{eq:lam_error_bounds}}{\ge}\\
&\Hintunder{\delta}(t) - \mathrm{e}^{-\lambda(t)} \cdot 2 \varepsilon^2 \cdot \lamerror(t) \cdot \sum \limits_{k=1}^{\Delta-2} \frac{\lambda^k(t)}{k!}.
\end{split}
\end{align}

Note that by \eqref{eq:lambda_universal_bounds}(a), if $\varepsilon$ is small enough, then $\lamover{\delta}(t)\le\sqrt[\Delta]{1/(2\varepsilon\Delta)}$ for any $t \in [0,\toverone]$. Therefore, the function~$\Hintlam(\lamover{\delta}(t))$ is convex on the interval $t \in [0,\toverone]$ (see Claim~\ref{cl:H_int_lam_prop}). Hence, using the first-order Taylor expansion of the function $\Hintlam(\lambda)$, we have
\begin{align}
\begin{split}\label{eq:H_int_lower_Taylor}
\Hintlam(\lamover{\delta/2}(t)) - \Hintlam(\lamover{\delta}(t)) \ge
&-\Hintlam'(\lamover{\delta}(t)) \cdot (\lamover{\delta}(t) - \lamover{\delta/2}(t) ) \stackrel{\eqref{eq:Hintlam_diff_bound}, \, \eqref{eq:lambda_under_over_def}}{\ge} \frac12 \cdot\mathrm{e}^{-\lamover{\delta}(t)} \cdot \frac{\delta}{2}\cdot \varepsilon \cdot \lamerror(t)\ge\\ 
&\frac 18\cdot\mathrm{e}^{-\lambda(t)} \cdot \delta \cdot \varepsilon \cdot \lamerror(t),
\end{split}
\end{align}
where in the last inequality we used that if $\varepsilon$ is small enough, then we have
\begin{equation*}
\mathrm{e}^{-\lamover{\delta}(t)}
\stackrel{\eqref{eq:lambda_under_over_def}, \, \eqref{eq:exp_bound}}{\ge}
\mathrm{e}^{-\lambda(t)} \cdot (1-(1+\delta)\varepsilon \lamerror(t))
\stackrel{\eqref{eq:lam_error_bounds}, \, \eqref{eq:lambda_universal_bounds}\mathrm{(b)}}{\ge}
\frac 12 \mathrm{e}^{-\lambda(t)}.
\end{equation*}

Hence, using the monotonicity of $\Hintlam(\lambda)$, we obtain the lower bound for $\Hint(t)$:
\begin{equation*}
\Hintunder{\delta}(t) \stackrel{\eqref{eq:H_int_lower_close}, \, \eqref{eq:H_int_lower_Taylor},\, \eqref{eq:lambda_universal_bounds}\mathrm{(b)}}{\le}
\Hintlam(\lamover{\delta/2}(t)) \stackrel{\eqref{eq:lambda_p_bounds}}{\le}
\Hintlam(\lamp(t)) \stackrel{\eqref{eq:H_p_int_with_lambda}}{=}
\Hint(t).
\end{equation*}

We can derive the upper bound of \eqref{eq:H_int_bounds} similarly. Analogously to \eqref{eq:H_int_lower_close} and \eqref{eq:H_int_lower_Taylor}, using also the inequalities~\eqref{eq:lambda_universal_bounds}(a) and \eqref{eq:lambda_universal_bounds}(c), we obtain that if $\varepsilon$ is small enough, then on the interval $[0,\toverone]$, we have
\begin{equation}\label{eq:H_int_upper_close} 
\Hintlam(\lamunder{\delta}(t)) 
\le
\Hintover{\delta}(t) + 2\mathrm{e}^{-\lambda(t)} \cdot \varepsilon^2 \cdot \lamerror(t) \cdot \left(2\lamerror(t) + \sum \limits_{k=0}^{\Delta-3}\lambda^k(t) \right)
\end{equation}
and
\begin{equation}\label{eq:H_int_upper_Taylor}
\Hintlam(\lamover{\delta}(t)) - \Hintlam(\lamover{\delta/2}(t)) \ge
\frac 18\cdot\mathrm{e}^{-\lambda(t)} \cdot \delta \cdot \varepsilon \cdot \lamerror(t).
\end{equation}

Therefore, we obtain the upper bound of \eqref{eq:H_int_bounds} if $\varepsilon$ is small enough:
\begin{equation*}
\Hintover{\delta}(t) \stackrel{\eqref{eq:H_int_upper_close}, \, \eqref{eq:H_int_upper_Taylor}, \, \eqref{eq:lambda_universal_bounds}\mathrm{(b)}}{\ge}
\Hintlam(\lamunder{\delta/2}(t)) \stackrel{\eqref{eq:lambda_p_bounds}}{\ge} 
\Hintlam(\lamp(t)) \stackrel{\eqref{eq:H_p_int_with_lambda}}{=}
\Hint(t).\qedhere
\end{equation*}
\end{proof}

\subsubsection{Asymptotic properties of the approximations of the eigenfunction}\label{sss:asymp_prop}

In Section~\ref{sss:asymp_prop} our goal is to study the asymptotic properties of the test functions $\wunder$ and $\wover$ that appear in the statement of Lemma~\ref{lem:sufficient_inequalities} that we will conclude in Section~\ref{sss:inequalities}. E.g.\ in Claim~\ref{cl:w_error_explicit} we will give an explicit formula for $\werror$ (introduced in Definition~\ref{def:w_approx}), and in Claims~\ref{cl:w_error_bounds} and~\ref{cl:asymp} we determine the growth rate of all of the terms that appear the explicit formula for $\werror$ and~$\werror'$.
As a side-product of these analytic calculations, we also deduce Proposition~\ref{prop:w_approx_prop}, the main ingredient of the proof of which is the following lemma. The statement of Lemma~\ref{lem:monotonicity} contains a constant~$\overline{K}$: in the proof of Proposition~\ref{prop:w_approx_prop} we will choose~${\overline{K}=2}$, but in Section~\ref{sss:inequalities} we will need to choose a bigger~${\overline{K}=\overline{K}(\delta)}$. 

\begin{lemma}[Bound on the slope of the approximations of the eigenfunction]\label{lem:monotonicity}
For any $\overline{K} \in \mathbb{R}_+$, there exists $\varepsilon_0>0$ such that if $\underline{p} \in \nearreg{\varepsilon_0}$, then on the interval $t \in [0, \toverone]$, we have
\begin{equation}\label{eq:w_error_derivative_goal}
\varepsilon \cdot \overline{K} \cdot |\werror'(t)| \le \lambda'(t),
\end{equation}
where the $\varepsilon$ and $\underline{r}=(r_k)_{k=2}^\Delta$ corresponding to $\underline{p}$ are defined in~\eqref{eq:eps_rk_def}.
\end{lemma}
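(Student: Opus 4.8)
The plan is to obtain an explicit integral representation for $\werror'(t)$ from the initial value problem~\eqref{eq:w_error_ivp}, bound each of the resulting terms on the interval $[0,\toverone]$ using the growth estimates for $\lambda$, $\lamerror$, $\Herror$ and $\Herrorint$ already established in Claim~\ref{cl:lambda_bounds}, Claim~\ref{cl:properties_lamerror} and Claim~\ref{cl:H_error_explicit}, and finally use the crucial lower bound $\lambda'(t)\ge 1/(t+1)$ from~\eqref{eq:lam_grows_logarithmic} together with the smallness factor~$\varepsilon$ (which, by~\eqref{eq:lambda_universal_bounds}(b), beats any fixed power of $\ln(1/\varepsilon)$) to absorb every term into $\lambda'(t)/(\varepsilon\overline{K})$. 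Concretely, I would first note that $\werror$ satisfies a second-order linear inhomogeneous ODE whose homogeneous part is exactly the equation $\lambda''=-H\lambda$ solved by $\lambda$; hence by reduction of order / variation of parameters, using that $\lambda$ is a solution of the homogeneous equation with $\lambda(0)=0$, $\lambda'(0)=1$, one gets a formula of the shape
\begin{equation*}
\werror'(t) = \lambda'(t)\int_0^t \frac{-\Herror(s)\,\lambda(s)\,w_0(s)}{\bigl(\lambda'(s)\bigr)^2\,?}\,\mathrm{d}s + \dots,
\end{equation*}
but it is cleaner to proceed as in the proof of Claim~\ref{cl:properties_lamerror}: integrate the ODE~\eqref{eq:w_error_ivp} directly. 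Integrating once, $\werror'(t) = -\int_0^t H(s)\werror(s)\,\mathrm{d}s - \int_0^t \Herror(s)\lambda(s)\,\mathrm{d}s$, and the second integral can be handled by integration by parts, writing $\Herror = -\Herrorint{}'$, so that $\int_0^t \Herror(s)\lambda(s)\,\mathrm{d}s = -\Herrorint(t)\lambda(t) + \Herrorint(0)\lambda(0) + \int_0^t \Herrorint(s)\lambda'(s)\,\mathrm{d}s = -\Herrorint(t)\lambda(t) + \int_0^t \Herrorint(s)\lambda'(s)\,\mathrm{d}s$ using $\Herrorint(0)=0$ from~\eqref{eq:H_error_int_at_0}.

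Next I would bound each piece on $[0,\toverone]$. For the term $\Herrorint(t)\lambda(t)$: by the explicit formula~\eqref{eq:H_error_int} and $0\le\lamerror(t)$ we have $0\le\Herrorint(t)\le \mathrm{e}^{-\lambda(t)}\sum_{k=1}^{\Delta-2}\frac{\lambda^k(t)}{k!}s_{k+2}$, which is at most a polynomial in $\lambda(t)$ times $\mathrm{e}^{-\lambda(t)}$ and hence bounded by an absolute constant; multiplied by $\lambda(t)\le 2\ln(2/\varepsilon)$ (using~\eqref{eq:lam_tover_bound}) it is $O(\ln(1/\varepsilon))$. For $\int_0^t\Herrorint(s)\lambda'(s)\,\mathrm{d}s$: since $\Herrorint\ge 0$ is bounded by a constant, this is at most a constant times $\lambda(t) = O(\ln(1/\varepsilon))$. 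For $\int_0^t H(s)\werror(s)\,\mathrm{d}s$: here I need a bound on $\werror$ itself, which in Section~\ref{sss:asymp_prop} will be provided by the explicit formula and growth estimates (Claims~\ref{cl:w_error_explicit},~\ref{cl:w_error_bounds},~\ref{cl:asymp}); granting those, $\werror(s)$ grows at most polynomially in $\lambda(s)$, while $H(s)=\lambda'(s)\mathrm{e}^{-\lambda(s)}$ decays, so the integral $\int_0^t H(s)\werror(s)\,\mathrm{d}s$ is bounded by an absolute constant (change variables to $\lambda$ and integrate a polynomial times $\mathrm{e}^{-\lambda}$ over $[0,\infty)$). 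Thus $|\werror'(t)| \le C\ln(1/\varepsilon)$ uniformly in $\underline{r}\in\Rspace$ and $t\in[0,\toverone]$ for some absolute constant $C=C(\Delta)$.

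Finally, to conclude~\eqref{eq:w_error_derivative_goal}, combine this with $\lambda'(t)\ge 1/(t+1)\ge 1/(\toverone+1)$ and $\toverone\le 2/(3\varepsilon)$, giving $\lambda'(t)\ge \tfrac{3\varepsilon}{2+3\varepsilon}\ge \varepsilon$ for small $\varepsilon$; hence it suffices that $\varepsilon\cdot\overline{K}\cdot C\ln(1/\varepsilon)\le\varepsilon$, i.e.\ $\overline{K}\,C\ln(1/\varepsilon)\le 1$ — wait, that fails. The correct comparison must instead use that $\lambda'(t)$ is not merely $\ge\varepsilon$ but in fact $\ge 1/(t+1)$ \emph{pointwise}, and that one actually wants $\varepsilon\overline{K}|\werror'(t)|\le\lambda'(t)$ for \emph{each} $t$: for small $t$, $\lambda'(t)$ is of order $1$ while $|\werror'(t)|$ is also small (both $\werror'(0)=0$); for large $t$ up to $\toverone$, I should bound $|\werror'(t)|$ more carefully by something like $C\lambda'(t)\cdot(\text{poly in }\lambda(t))$ rather than by a constant, which is exactly what the variation-of-parameters formula $\werror'(t)=\lambda'(t)\cdot(\text{something})$ yields. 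So the cleaner route is: show $|\werror'(t)|\le \lambda'(t)\cdot g(\lambda(t))$ for an explicit polynomial $g$, then use~\eqref{eq:lambda_universal_bounds}(b) that $\varepsilon\cdot g(\lambda(t))\le\varepsilon\cdot\mathrm{poly}(\ln(1/\varepsilon))\to 0$, so for $\varepsilon$ small enough $\varepsilon\overline{K}g(\lambda(t))\le 1$, which gives~\eqref{eq:w_error_derivative_goal}. \textbf{The main obstacle} is precisely establishing the factorized bound $|\werror'(t)|\le\lambda'(t)\cdot g(\lambda(t))$: one must verify that in the variation-of-parameters representation the ``integrating'' denominators do not blow up, i.e.\ control $\int_0^t (\lambda'(s))^{-2}(\dots)\,\mathrm{d}s$-type quantities, which amounts to checking that the Wronskian-type normalization coming from the second, linearly independent solution of $y''=-Hy$ behaves well — this is where the estimates from Section~\ref{sss:asymp_prop} on $\werror$ and its companion solution are genuinely needed, and the bookkeeping of the $s_k$-dependent polynomial coefficients (uniformly over $\underline r\in\Rspace$) is the part requiring care.
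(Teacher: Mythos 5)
You correctly identify the right shape of the argument: one cannot get away with a uniform numerical bound on $|\werror'(t)|$ because $\lambda'(\toverone)\approx \varepsilon$, so a pointwise, $t$-dependent comparison with $\lambda'(t)$ is unavoidable; and the structure of the variation-of-parameters representation $\werror'=\werrorfirst\lambda'+\werrorsecond u'$ (Claim~\ref{cl:w_error_explicit}) is indeed the engine of the proof, as you anticipate in your final paragraph. This matches the paper's approach.

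However, the specific bound you propose as the remaining ``main obstacle'' --- $|\werror'(t)|\le\lambda'(t)\cdot g(\lambda(t))$ with $g$ a \emph{polynomial} --- is false, and the argument cannot be repaired along those lines. By~\eqref{eq:werror_u_derivative}, $\werror'(t)/\lambda'(t)=\werrorfirst(t)+\werrorsecond(t)\,u'(t)/\lambda'(t)$, and by~\eqref{eq:asymp_behavior}(g) together with $\lambda'(t)\sim 1/t$ one has $u'(t)/\lambda'(t)\sim -t/\ln t$. Since $\lambda(t)\sim\ln t$, this ratio is $\sim -\mathrm{e}^{\lambda(t)}/\lambda^2(t)$, which grows exponentially (not polynomially) in $\lambda(t)$. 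Consequently, the step ``$\varepsilon\cdot g(\lambda(t))\le\varepsilon\cdot\mathrm{poly}(\ln(1/\varepsilon))\to 0$'' via~\eqref{eq:lambda_universal_bounds}(b) does not apply to the dominant term. For the same reason your earlier claim that ``$\werror(s)$ grows at most polynomially in $\lambda(s)$'' is also incorrect (cf.~\eqref{eq:asymp_behavior}(f): $\werror(s)\sim -\Upsilon_{\underline r}\, s/\ln s$), although the integral $\int_0^t H\werror$ you were estimating there does remain bounded because $H$ decays fast enough.

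The paper resolves this by treating the two pieces of $\werror'$ asymmetrically. The piece $\werrorfirst(t)\lambda'(t)$ \emph{is} of the form $\lambda'(t)\cdot\mathrm{poly}(\lambda(\toverone))$ (see~\eqref{eq:werrorfirst_upp_bound1}--\eqref{eq:werrorfirst_upp_bound2}) and is killed by~\eqref{eq:lambda_universal_bounds}(b), exactly as you envisage. The piece $\werrorsecond(t)u'(t)$ is controlled by a different mechanism: $\werrorsecond$ is bounded by the constant $2\Delta^2(\Delta+1)$ (Claim~\ref{cl:w_error_bounds}(b)), and then
\begin{equation*}
\varepsilon\,\overline K\,\frac{|\werrorsecond(t)u'(t)|}{\lambda'(t)}
\lesssim \varepsilon\cdot\frac{t}{\ln t}
\le \varepsilon\cdot\frac{\toverone}{\ln\toverone}
= \frac{2}{\Upsilon_{\underline r}\,\ln\toverone}\longrightarrow 0
\quad\text{as }\varepsilon\to 0,
\end{equation*}
using $\varepsilon\toverone=2/\Upsilon_{\underline r}$, with a separate elementary estimate on the compact set $[0,T]$ where the asymptotics do not yet apply. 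This cancellation ($\varepsilon\cdot t$ stays bounded on the relevant range, and one gains a factor $1/\ln\toverone$) is the genuine idea your proposal is missing; without it, the stated ``polynomial in $\lambda$'' strategy collapses at the $\werrorsecond u'$ term.
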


Before proving Lemma~\ref{lem:monotonicity}, we show that it implies Proposition~\ref{prop:w_approx_prop}.

\begin{proof}[Proof of Proposition~\ref{prop:w_approx_prop}]
Let $\delta \in (0,1)$ and $\underline{p} \in \nonreg$.

\begin{enumerate}[label=(\alph*)]
\item Observe that the equations $\wunder(0)=\wover(0)=0$ follow from \eqref{eq:lambda_2reg_ode} and \eqref{eq:w_error_ivp}.
\item Inequality \eqref{eq:w_error_derivative_goal} (with $\overline{K}=2$) implies that there exists $\varepsilon_0>0$ such that if $\underline{p} \in \nearreg{\varepsilon_0}$, then
\begin{itemize}
\item $\wunder'(t) = \lambda'(t) + \left(1+\frac{\delta}{2}\right)\varepsilon \werror'(t) \ge 0$ on the interval $[0,\tunder)$,
\item $\wover'(t) = \lambda'(t) + \left(1-\frac{\delta}{2}\right)\varepsilon \werror'(t) \ge 0$ on the interval $[0,\tover)$.
\end{itemize}

Recall that the functions~$\wunder(t)$ and~$\wover(t)$ are constant for~${t \ge \tunder}$ and for~${t \ge \tover}$, respectively (cf.~\eqref{eq:w_approx}). Therefore, $\wunder(\cdot)$ and $\wover(\cdot)$ are monotone increasing on $\mathbb{R}_+$ if $\varepsilon$ is small enough.\qedhere
\end{enumerate}
\end{proof}

In order to prove Lemma~\ref{lem:monotonicity}, we need to study the function $\werror(t)$. We begin by giving an explicit formula for it.
Recall the function $\Herror(t)$ from \eqref{eq:H_error_def}.

\begin{claim}[Explicit formula for $\werror$]\label{cl:w_error_explicit}
For any $\underline{r}\in \Rspace$, the function $\werror(t)$ can be expressed as
\begin{equation}\label{eq:w_error_ivp_solution}
\werror(t) = \werrorfirst(t) \cdot \lambda(t) + \werrorsecond(t) \cdot u(t),
\end{equation}
where
\begin{equation}\label{eq:w_error_ivp_solution_aux_functions}
u(t) := 1+\lambda(t) \cdot \int \limits_0^t \frac{\lambda'(s) - 1}{\lambda^2(s)} \, \mathrm{d}s,
\quad
\werrorfirst(t) := -\int \limits_0^t u(s) \cdot \Herror(s) \cdot \lambda(s)\, \mathrm{d}s,
\quad
\werrorsecond(t) := \int \limits_0^t \Herror(s) \cdot \lambda^2(s)\, \mathrm{d}s.
\end{equation}

Furthermore, we have
\begin{equation}\label{eq:werror_u_derivative}
\werror'(t) = 
\werrorfirst(t)\lambda'(t) + \werrorsecond(t)u'(t),
\quad \text{where} \quad
u'(t) = 
\lambda'(t) \cdot \int \limits_0^t \frac{\lambda'(s) - 1}{\lambda^2(s)} \, \mathrm{d}s + \frac{\lambda'(t) - 1}{\lambda(t)}.
\end{equation}
\end{claim}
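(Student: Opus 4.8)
The plan is to read \eqref{eq:w_error_ivp} as the inhomogeneous second-order linear ODE $y'' + H(t)\,y = g(t)$ with $g(t):=-\Herror(t)\,\lambda(t)$ and zero initial data, and to solve it by variation of parameters built from two explicit solutions of the homogeneous equation $y''+H(t)\,y=0$. The first homogeneous solution is $\lambda$ itself: differentiating \eqref{eq:lambda_2reg_ode} and using \eqref{eq:I(t)_explicit} gives $\lambda''(t) = -\lambda(t)\lambda'(t)\mathrm{e}^{-\lambda(t)} = -H(t)\lambda(t)$, which is precisely the identity already exploited in the proof of Claim~\ref{cl:2reg_eigenfunction}. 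A second, linearly independent solution is obtained by reduction of order; the naive candidate $\lambda(t)\int_0^t \lambda^{-2}(s)\,\mathrm{d}s$ is inadmissible because $\lambda(0)=0$, so one subtracts the divergent part and works with the function $u$ from \eqref{eq:w_error_ivp_solution_aux_functions}.

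First I would verify that $u$ is well-defined and solves the homogeneous equation. Near $s=0$ one has $\lambda(s)=s+O(s^3)$ and, since $\lambda''(0)=-H(0)\lambda(0)=0$, also $\lambda'(s)-1=O(s^2)$; hence the integrand $(\lambda'(s)-1)/\lambda^2(s)$ is continuous and bounded near $0$, so $u$ and the integral appearing in \eqref{eq:werror_u_derivative} are well-defined, with $u(0)=1$ and $u'(0)=0$. Writing $g_0(t):=\int_0^t (\lambda'(s)-1)\lambda^{-2}(s)\,\mathrm{d}s$ so that $u=1+\lambda g_0$, a direct differentiation gives $u'=\lambda' g_0 + (\lambda'-1)/\lambda$ (this is the formula for $u'$ in \eqref{eq:werror_u_derivative}) and then $u''=\lambda'' g_0 + \lambda''/\lambda = -H(\lambda g_0 + 1) = -Hu$, using $\lambda''=-H\lambda$ once more.

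Next I would compute the Wronskian $W(t):=\lambda(t)u'(t)-\lambda'(t)u(t)$. Since the ODE has no first-order term, $W'=\lambda u''-\lambda'' u = -H\lambda u + H\lambda u = 0$, so $W$ is constant, and evaluating at $t=0$ with $\lambda(0)=0$, $\lambda'(0)=1$, $u(0)=1$ yields $W\equiv -1$. Now set $v(t):=\werrorfirst(t)\lambda(t)+\werrorsecond(t)u(t)$ with $\werrorfirst,\werrorsecond$ as in \eqref{eq:w_error_ivp_solution_aux_functions}; since $\werrorfirst(0)=\werrorsecond(0)=0$ and $u'(0)$ is finite, $v(0)=0$ and $v'(0)=0$. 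Differentiating and using $\werrorfirst'=-u\Herror\lambda$, $\werrorsecond'=\Herror\lambda^2$, the mixed terms cancel, $\werrorfirst'\lambda+\werrorsecond' u = 0$, so $v'=\werrorfirst\lambda'+\werrorsecond u'$, which is exactly the derivative formula \eqref{eq:werror_u_derivative}; differentiating once more, $\werrorfirst'\lambda'+\werrorsecond' u' = \Herror\lambda(\lambda u'-\lambda' u)=W\Herror\lambda=-\Herror\lambda$ while $\werrorfirst\lambda''+\werrorsecond u''=-H v$, hence $v''+Hv=-\Herror\lambda$. Thus $v$ solves the IVP \eqref{eq:w_error_ivp}, and since $H$ and $\Herror\lambda$ are continuous on $\mathbb{R}_+$ this IVP has a unique solution, so $\werror=v$, which proves \eqref{eq:w_error_ivp_solution}, and \eqref{eq:werror_u_derivative} follows from the derivative computation just described.

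The main obstacle is the coordinate singularity at $t=0$, where $\lambda$ vanishes: one must carefully justify that $u$, $u'$ and $g_0$ extend continuously there, so that the Wronskian identity $W\equiv-1$ and the initial conditions $v(0)=v'(0)=0$ are legitimate. This rests on the local expansions $\lambda(t)=t+O(t^3)$ and $\lambda'(t)=1+O(t^2)$, which follow from $\lambda(0)=0$, $\lambda'(0)=1$, $\lambda''(0)=0$. Everything else is routine (if slightly lengthy) differentiation.
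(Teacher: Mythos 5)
Your proposal is correct and follows essentially the same route as the paper: identify $\lambda$ as a homogeneous solution of $y''+Hy=0$, build $u$ by reduction of order (with the correction needed because $\lambda(0)=0$), and apply variation of parameters, then verify directly. The paper's proof only checks well-definedness of $u$ and declares the verification ``straightforward,'' deferring the derivation to the following remark, so your write-up just supplies the details (Wronskian $W\equiv-1$, cancellation $\werrorfirst'\lambda+\werrorsecond'u=0$, and the local expansion near $t=0$) that the paper leaves implicit.
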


\begin{proof}
Note that by l'H\^{o}pital's rule and \eqref{eq:lambda_2reg_ode}, we obtain that $(\lambda'(t)-1)/\lambda^2(t) \to -1/2$ as $t \searrow 0$. Since~$\lambda(t)$ is monotone increasing and $\lambda'(t) \in [0,1]$ for any $t \in \mathbb{R}_+$, it follows that the function $(\lambda'(t)-1)/\lambda^2(t)$ is bounded. Thus the function $u(t)$ in \eqref{eq:w_error_ivp_solution_aux_functions} is well-defined. Therefore, the function on the right-hand side of~\eqref{eq:w_error_ivp_solution} is also well-defined, and it is straightforward to check that it solves the initial value problem~\eqref{eq:w_error_ivp}.

The formulas in \eqref{eq:werror_u_derivative} follow from \eqref{eq:w_error_ivp_solution} and \eqref{eq:w_error_ivp_solution_aux_functions} by differentiation.
\end{proof}

\begin{remark}[Derivation of~\eqref{eq:w_error_ivp_solution}]
Using that $\lambda(t)$ is a solution of the ODE ${y''(t) = -H(t) \cdot y(t)}$ (cf.\ the proof of Claim~\ref{cl:2reg_eigenfunction}), we obtain $u(t)$ as a linearly independent solution of this homogeneous second-order differential equation using Abel's identity. Then, using the method of variation of parameters, we can find the general solution of the inhomogeneous differential equation~\eqref{eq:w_error_ivp}, too. Finally, substituting the initial values, we obtain that the solution of the initial value problem~\eqref{eq:w_error_ivp} can be written as in~\eqref{eq:w_error_ivp_solution}.
\end{remark}

\begin{notation}[Molloy--Reed constant]
Let $\underline{r} \in \Rspace$. We introduce the notation
\begin{equation}\label{eq:Upsilon_r_def}
\Upsilon_{\underline{r}} := \sum \limits_{k=3}^\Delta k(k-2)r_k. 
\end{equation}

Observe that with this notation, for any $\underline{p} \in \nonreg$, we have $\tunder = (1-\delta)/(\varepsilon \cdot \Upsilon_{\underline{r}})$ and $\tover = (1+\delta)/(\varepsilon \cdot \Upsilon_{\underline{r}})$, where the $\varepsilon$ and $\underline{r}=(r_k)_{k=2}^\Delta$ corresponding to $\underline{p}$ are defined in~\eqref{eq:eps_rk_def}.
\end{notation}

Recall from Remark~\ref{rem:molloy_reed} that the RDCP and the configuration model asymptotically have the same critical edge density in the almost $2$-regular limit. 
This is why we call $\Upsilon_{\underline{r}}$ the Molloy--Reed constant, but let us stress again that these two random graph models are genuinely different (cf.~Remark~\ref{rem:flaws}), thus our main result (Theorem~\ref{thm:tc_asymp_behavior_cont}) does not follow from the classical Molloy--Reed criterion.

Before we prove Lemma~\ref{lem:monotonicity}, we derive some useful bounds and asymptotic relations on the various terms that appear in the explicit formulas for $\werror$ and $\werror'$ in Claims~\ref{cl:w_error_bounds} and~\ref{cl:asymp}.

\begin{claim}[Bounds on $\Herror$ and $\werrorsecond$]\label{cl:w_error_bounds}
For any $\underline{r} \in \Rspace$, we have
\begin{enumerate}[label={\upshape(\alph*)}]
\item $|\Herror(t)| \le 2\lambda'(t) \mathrm{e}^{-\lambda(t)} \cdot \sum_{k=0}^{\Delta-1} \frac{\lambda^k(t)}{k!}$ for any $t \in \mathbb{R}_+$, \puteqnum\label{eq:H_error_abs_bound}
\item $| \werrorsecond(t) | \le 2\Delta^2 \cdot (\Delta+1)$ for any $t \in \mathbb{R}_+$.\puteqnum\label{eq:w_error_2nd_bound}
\end{enumerate}

Furthermore,
\begin{enumerate}
\item[$\mathrm{(c)}$] $\lim \limits_{t \to \infty} \werrorsecond(t)= \Upsilon_{\underline{r}}$, uniformly in $\underline{r} \in \Rspace$. \puteqnum\label{eq:w_error_2nd_limit}
\end{enumerate}
\end{claim}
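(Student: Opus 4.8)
The plan is to prove the three assertions in the order stated, deducing \eqref{eq:w_error_2nd_bound} from \eqref{eq:H_error_abs_bound} and then using \eqref{eq:H_error_abs_bound} once more, together with an integration-by-parts computation, for \eqref{eq:w_error_2nd_limit}. For \eqref{eq:H_error_abs_bound} I would start from the explicit formula \eqref{eq:H_error_formula} and substitute the ODE \eqref{eq:lam_error_ode} for $\lamerror'$; collecting the two terms proportional to $\lamerror$ and using $\lambda'(t)=\mathrm{e}^{-\lambda(t)}(1+\lambda(t))$ (from \eqref{eq:lambda_2reg_ode}), this rewrites as
\begin{equation*}
\Herror(t)=-\mathrm{e}^{-2\lambda(t)}(1+2\lambda(t))\lamerror(t)+\mathrm{e}^{-2\lambda(t)}\sum_{k=2}^{\Delta-1}\frac{\lambda^k(t)}{k!}s_{k+1}+\mathrm{e}^{-\lambda(t)}\lambda'(t)\sum_{k=0}^{\Delta-2}\frac{\lambda^k(t)}{k!}r_{k+2}.
\end{equation*}
The point that makes the constant $2$ come out is sign cancellation: since $\lamerror(t)\ge0$ by \eqref{eq:lam_error_bounds} and $s_j\ge0$, $r_j\ge0$ for $j\ge3$ while $r_2=-1$, the first two summands form a difference of non-negative quantities and the third summand is $\mathrm{e}^{-\lambda(t)}\lambda'(t)$ times ($-1$ plus a non-negative quantity), so I would estimate $|{-A+B}|\le\max(A,B)$ rather than use the triangle inequality in both places. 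Plugging in the explicit bound $\lamerror(t)\le\frac12\sum_{k=1}^{\Delta-2}\frac{\lambda^k(t)}{k!}s_{k+2}$ from \eqref{eq:lam_error_bounds}, the trivial bounds $|s_j|\le1$, $|r_j|\le1$, and distinguishing whether $\sum_{k=1}^{\Delta-2}\frac{\lambda^k(t)}{k!}$ is below or above $1$, every surviving contribution is at most $\mathrm{e}^{-2\lambda(t)}(1+\lambda(t))\sum_{k=0}^{\Delta-1}\frac{\lambda^k(t)}{k!}=\lambda'(t)\mathrm{e}^{-\lambda(t)}\sum_{k=0}^{\Delta-1}\frac{\lambda^k(t)}{k!}$, and adding the (at most two) pieces gives \eqref{eq:H_error_abs_bound}.

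For \eqref{eq:w_error_2nd_bound}, I would use \eqref{eq:H_error_abs_bound} and the substitution $x=\lambda(s)$ (so $\mathrm{d}x=\lambda'(s)\,\mathrm{d}s$ and $x$ ranges over all of $\mathbb{R}_+$ since $\lambda(s)\to\infty$), giving
\begin{equation*}
|\werrorsecond(t)|\le\int_0^\infty|\Herror(s)|\lambda^2(s)\,\mathrm{d}s\le 2\int_0^\infty \mathrm{e}^{-x}x^2\sum_{k=0}^{\Delta-1}\frac{x^k}{k!}\,\mathrm{d}x=2\sum_{k=0}^{\Delta-1}\frac{(k+2)!}{k!}=2\sum_{k=0}^{\Delta-1}(k+1)(k+2),
\end{equation*}
and since each of the $\Delta$ summands is at most $\Delta(\Delta+1)$, this is $\le2\Delta^2(\Delta+1)$. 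The tail version of the same estimate, $|\werrorsecond(\infty)-\werrorsecond(t)|\le 2\int_{\lambda(t)}^\infty \mathrm{e}^{-x}x^2\sum_{k=0}^{\Delta-1}\frac{x^k}{k!}\,\mathrm{d}x$, shows that $\werrorsecond(t)$ has a limit and, crucially, that the bound depends only on $\lambda(t)$ and $\Delta$ — this will give the uniformity in $\underline{r}$ in \eqref{eq:w_error_2nd_limit}.

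To identify the limit in \eqref{eq:w_error_2nd_limit}, I would integrate $\werrorsecond(t)=\int_0^t\Herror(s)\lambda^2(s)\,\mathrm{d}s$ by parts via $\Herror=-\Herrorint'$; the boundary terms vanish because $\Herrorint(0)=0$ (eq.~\eqref{eq:H_error_int_at_0}), $\lambda(0)=0$, and $\Herrorint(t)\lambda^2(t)\to0$ (from \eqref{eq:H_error_int} and \eqref{eq:lam_error_bounds}, since $|\Herrorint(t)|\le\mathrm{e}^{-\lambda(t)}\sum_{k=1}^{\Delta-2}\frac{\lambda^k(t)}{k!}$), so $\lim_{t\to\infty}\werrorsecond(t)=2\int_0^\infty\Herrorint(s)\lambda(s)\lambda'(s)\,\mathrm{d}s$. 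Substituting \eqref{eq:H_error_int} for $\Herrorint$, the polynomial part contributes $\sum_{k=1}^{\Delta-2}(k+1)s_{k+2}$ after the change of variables $x=\lambda(s)$. For the remaining term $J:=\int_0^\infty\mathrm{e}^{-\lambda(s)}\lamerror(s)\lambda(s)\lambda'(s)\,\mathrm{d}s$, the substitution does not apply directly because $\lamerror$ is not a function of $\lambda$ alone; instead I would rewrite $\mathrm{e}^{-\lambda}\lamerror\lambda=-\lamerror'+\mathrm{e}^{-\lambda}\sum_{k=2}^{\Delta-1}\frac{\lambda^k}{k!}s_{k+1}$ using the ODE \eqref{eq:lam_error_ode}, integrate $\int_0^\infty\lamerror'(s)\lambda'(s)\,\mathrm{d}s$ by parts (boundary terms vanish by \eqref{eq:lam_error_bounds}), and use $\lambda''(s)=-H(s)\lambda(s)=-\lambda'(s)\mathrm{e}^{-\lambda(s)}\lambda(s)$ (cf.\ the proof of Claim~\ref{cl:2reg_eigenfunction} and \eqref{eq:I(t)_explicit}) to recognize that this integral equals $J$ again; the resulting identity $-J=J-\sum_{k=2}^{\Delta-1}s_{k+1}$ gives $J=\frac12\sum_{k=2}^{\Delta-1}s_{k+1}$. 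Hence $\lim_{t\to\infty}\werrorsecond(t)=2\sum_{k=1}^{\Delta-2}(k+1)s_{k+2}-\sum_{k=2}^{\Delta-1}s_{k+1}=\sum_{j=3}^{\Delta}(2j-3)s_j$, and writing $s_j=\sum_{d=j}^{\Delta}r_d$, interchanging the order of summation, and using $\sum_{j=3}^{d}(2j-3)=d(d-2)$ turns this into $\sum_{d=3}^{\Delta}d(d-2)r_d=\Upsilon_{\underline{r}}$; uniformity in $\underline{r}$ follows from the tail bound noted in the previous paragraph. I expect the main obstacle to be exactly this last part: correctly handling the $\lamerror$-contribution $J$ (the integration-by-parts loop that closes on itself), verifying that every boundary term vanishes — which works only because $\lambda$ grows logarithmically while $\mathrm{e}^{-\lambda}$ decays exponentially — and carrying out the reindexing bookkeeping leading to the identity $\sum_{j=3}^{d}(2j-3)=d(d-2)$; by contrast, getting the constant $2$ in \eqref{eq:H_error_abs_bound} is a minor subtlety handled by the $\max(A,B)$ estimate above.
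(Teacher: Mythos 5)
Your proof is correct, and for parts (a) and (b) it follows essentially the same path as the paper: substitute the ODE \eqref{eq:lam_error_ode} into \eqref{eq:H_error_formula}, exploit the sign structure (the paper phrases it as separate one-sided bounds \eqref{eq:H_error_bounds}, you phrase it as $|{-A+B}|\le\max(A,B)$ with $A,B\ge0$ — these are the same estimate), then bound the resulting Gamma-type integrals after the substitution $x=\lambda(s)$. One small inaccuracy: you say $\lamerror$ ``is not a function of $\lambda$ alone,'' but it in fact is, since the prefactor $\lambda'(t)=\mathrm{e}^{-\lambda(t)}(1+\lambda(t))$ in \eqref{eq:lam_error_solution} is itself a function of $\lambda(t)$; the real obstruction is just that the change of variables does not turn $J$ into a single elementary Gamma integral.

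For part (c) your argument genuinely diverges from the paper's, in two respects, and both are sound. First, to evaluate $J=\int_0^\infty\mathrm{e}^{-\lambda}\lamerror\lambda\lambda'\,\mathrm{d}s$, the paper switches to the variable $\lambda$, writes $\lamerror$ via the inner integral in \eqref{eq:lam_error_solution}, and interchanges the order of integration (display \eqref{eq:change_integrals}); you instead substitute the ODE for $\lamerror'$, integrate $\int\lamerror'\lambda'$ by parts, and use $\lambda''=-H\lambda=-\lambda'\mathrm{e}^{-\lambda}\lambda$ to make the integral close back on $J$, giving $2J=\sum_{k=2}^{\Delta-1}s_{k+1}$. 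This ``self-closing'' integration-by-parts loop is a nice alternative that avoids Fubini entirely; I checked the boundary terms ($\lamerror\lambda'\to0$ since $\lamerror=\mathcal{O}((\ln t)^{\Delta-2})$ and $\lambda'\sim1/t$) and the reindexing $\sum_{j=3}^d(2j-3)=d(d-2)$, and they are right. Both methods agree, since $\sum_{j=3}^{\Delta}s_j=\sum_{d=3}^{\Delta}(d-2)r_d$. Second, for the uniformity in $\underline{r}$, the paper invokes Dini's theorem (continuity in $\underline{r}$ plus monotone convergence of $\werrorsecond$), whereas you give a direct, $\underline{r}$-independent tail bound $|\werrorsecond(\infty)-\werrorsecond(t)|\le 2\int_{\lambda(t)}^\infty \mathrm{e}^{-x}x^2\sum_{k=0}^{\Delta-1}\frac{x^k}{k!}\,\mathrm{d}x$. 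Your version is actually cleaner: since $\Herror$ changes sign (cf.\ \eqref{eq:H_error_int_at_0}), $\werrorsecond$ is not globally monotone and the paper's Dini argument implicitly needs Claim~\ref{cl:Herror_positivity} to restrict to $t\ge\theta$; your explicit tail estimate sidesteps that issue altogether.
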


\needspace{3\baselineskip}
\begin{proof}
\mbox{}
\begin{enumerate}[label=(\alph*)]
\item First, we prove \eqref{eq:H_error_abs_bound}.
If we rewrite $\Herror(t)$ with the help of \eqref{eq:H_error_formula} and \eqref{eq:lam_error_ode}, then we can obtain an upper and a lower bound for $\Herror(t)$ by dropping the negative and the positive terms, respectively. Using also the bounds in~\eqref{eq:lam_error_bounds} in the lower bound, we can derive that
\begin{equation}\label{eq:H_error_bounds}
\Herror(t) \le 2\lambda'(t) \mathrm{e}^{-\lambda(t)} \cdot \sum \limits_{k=1}^{\Delta-1} \frac{\lambda^k(t)}{k!},
\qquad
\Herror(t) \ge -\lambda'(t) \mathrm{e}^{-\lambda(t)} \cdot \left( \sum \limits_{k=1}^{\Delta-2} \frac{\lambda^k(t)} {k!}+1 \right).
\end{equation}
The inequalities in \eqref{eq:H_error_bounds} imply \eqref{eq:H_error_abs_bound}.

\item In order to prove the statements related to the function $\werrorsecond(t)$, we introduce the notation $F_k(t)$ for the cumulative distribution function of the gamma distribution with shape $k+1$ and scale $1$, i.e.,
\begin{equation}\label{eq:gamma_cdf}
F_k(t) := \int \limits_0^t \mathrm{e}^{-x} \cdot \frac{x^k}{k!} \, \mathrm{d}x.
\end{equation}

Observe that by substituting $\lambda = \lambda(s)$, we obtain that for any $t \in \mathbb{R}_+$, we have
\begin{equation}\label{eq:gamma_integral}
\int \limits_0^t \lambda'(s) \cdot \mathrm{e}^{-\lambda(s)} \cdot \frac{\lambda^k(s)}{k!} \, \mathrm{d}s \stackrel{\eqref{eq:gamma_cdf}}{=} F_k(\lambda(t)).
\end{equation}

Hence, we obtain \eqref{eq:w_error_2nd_bound}:
\begin{equation*}
|\werrorsecond(t)| \stackrel{\eqref{eq:w_error_ivp_solution_aux_functions}, \, \eqref{eq:H_error_abs_bound}, \, \eqref{eq:gamma_integral}}{\le} \sum \limits_{k=0}^{\Delta-1} 2(k+2)(k+1)F_{k+2}(\lambda(t)) \le 2\Delta^2 \cdot (\Delta+1).
\end{equation*}

\item Finally, we prove \eqref{eq:w_error_2nd_limit}. By substituting $\lambda=\lambda(s)$, changing the order of integrations and by the fact ${\lim_{t \to \infty} F_k(t) = 1}$ (cf.\ \eqref{eq:gamma_cdf}), we obtain
\begin{multline}\label{eq:change_integrals}
\int \limits_0^\infty \mathrm{e}^{-\lambda(s)} \cdot \lambda(s) \cdot \lambda'(s) \cdot \lamerror(s) \, \mathrm{d}s
\stackrel{\eqref{eq:lam_error_solution}, \, \eqref{eq:lambda_2reg_ode}}{=}
\int \limits_0^\infty \mathrm{e}^{-2\lambda} \cdot (1+\lambda) \cdot \lambda \cdot \int \limits_0^\lambda \frac{\mathrm{e}^x}{(1+x)^2} \cdot \sum \limits_{k=2}^{\Delta-1} \frac{x^k}{k!}s_{k+1} \, \mathrm{d}x \, \mathrm{d}\lambda=\\
 \sum \limits_{k=2}^{\Delta-1} \left[s_{k+1} \int \limits_0^\infty \frac{\mathrm{e}^x}{(1+x)^2} \cdot \frac{x^k}{k!} \int \limits_x^\infty \mathrm{e}^{-2\lambda} \cdot(1+\lambda) \cdot \lambda \, \mathrm{d}\lambda \, \mathrm{d}x\right] \stackrel{\eqref{eq:gamma_cdf}}{=}
\sum \limits_{k=2}^{\Delta-1} s_{k+1} \cdot \frac12 = \frac 12 \sum \limits_{k=3}^\Delta (k-2)r_k.
\end{multline}

On the other hand, the function~${\mathrm{e}^{-\lambda(t)} \cdot \lamerror(t) \cdot \lambda^2(t)}$ converges to~$0$ as ${t \to \infty}$ (see~\eqref{eq:lam_error_bounds} and~\eqref{eq:lam_grows_logarithmic}). Therefore, using integration by parts, we obtain
\begin{align}
\begin{split}\label{eq:H_error_lambda^2_1st_component}
\int \limits_0^\infty \mathrm{e}^{-\lambda(s)} \cdot \left( \lamerror'(s) - \lambda'(s) \cdot \lamerror(s) \right) \cdot \lambda^2(s) \, \mathrm{d}s
\stackrel{\eqref{eq:change_integrals}}{=}&
\left[ \mathrm{e}^{-\lambda(s)} \cdot \lamerror(s) \cdot \lambda^2(s) \right]_{s=0}^\infty - \sum \limits_{k=3}^\Delta (k-2)r_k =\\
&-\sum \limits_{k=3}^\Delta (k-2)r_k.
\end{split}
\end{align}

Noting that $r_2 = -\sum_{k=3}^\Delta r_k$ (see~\eqref{eq:R_space}), we can conclude \eqref{eq:w_error_2nd_limit}:
\begin{equation*}
\lim \limits_{t \to \infty} \werrorsecond(t)
\stackrel{\eqref{eq:w_error_ivp_solution_aux_functions},\, \eqref{eq:H_error_formula},\, \eqref{eq:gamma_integral},\, \eqref{eq:H_error_lambda^2_1st_component}}{=}
\sum \limits_{k=3}^\Delta \left[-(k-2)+(k^2-k-2)\right] \cdot r_k\stackrel{\eqref{eq:Upsilon_r_def}}{=} \Upsilon_{\underline{r}}.
\end{equation*}
\end{enumerate}

Since $\Rspace$ is compact, for each $t \in \mathbb{R}_+$ the function~$\werrorsecond(t)$ and the limit
$\lim \limits_{t \to \infty} \werrorsecond(t) = \int_0^\infty \Herror(s) \cdot \lambda^2(s)\, \mathrm{d}s$ depend continuously on~$\underline{r}$, moreover, $\werrorsecond(t)$ converges to $\Upsilon_{\underline{r}}$ monotonically, we can use Dini's theorem to conclude that this convergence is uniform in~${\underline{r} \in \Rspace}$.
\end{proof}

We also need to study the asymptotic behavior as $t \to \infty$ of some functions defined earlier.

Recall the notion of $h(t) \sim g(t)$ and $h(t) \ll g(t)$ from Notation~\ref{not:asymp_relations}.

\begin{notation}[Asymptotic uniform boundedness in $\underline{r}$]
Let $h_{\underline{r}}\colon \mathbb{R}_+ \to \mathbb{R}$ be a function that may depend on $\underline{r} \in \Rspace$ and $g \colon \mathbb{R}_+ \to \mathbb{R}_+$ be a non-negative function. We use the notation ${h_{\underline{r}}(t) = \mathcal{O}(g(t))}$ if there exist constants $C,\, T \in \mathbb{R}_+$ such that
\begin{equation*}
\sup \limits_{\underline{r} \in \Rspace} |h_{\underline{r}}(t)| \le C \cdot g(t)
\quad \text{for any} \quad t \ge T.
\end{equation*}
\end{notation}

\begin{claim}[Asymptotic behavior of frequently used functions]\label{cl:asymp}
Let $\underline{r} \in \Rspace$. Then we have
\begin{equation}\label{eq:asymp_behavior}
\begin{aligned}
&\mathrm{(a)} \, \lambda(t) \sim \ln t, &\quad
&\mathrm{(b)} \, \mathrm{e}^{\lambda(t)} \sim t \ln t, &\quad
&\mathrm{(c)} \, \Herror(t) = \mathcal{O}\left( \frac{(\ln t)^{\Delta-3}}{t^2} \right),&\quad
&\mathrm{(d)} \, \Herrorint(t) = \mathcal{O}\left( \frac{(\ln t)^{\Delta-3}}{t} \right),\\
&\mathrm{(e)} \, u(t) \sim -\frac{t}{\ln t},&\quad
&\mathrm{(f)} \, \werror(t) \sim - \Upsilon_{\underline{r}}\cdot \frac{t}{\ln t}, &\quad
&\mathrm{(g)} \, u'(t) \sim -\frac{1}{\ln t},&\quad
&\mathrm{(h)} \, \werror'(t) \sim -\Upsilon_{\underline{r}} \cdot \frac{1}{\ln t}.
\end{aligned}
\end{equation}
Moreover, the asymptotic relations~(f) and~(h) hold uniformly in $\underline{r} \in \Rspace$.
\end{claim}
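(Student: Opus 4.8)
The plan is to first pin down the exact asymptotics of $\lambda(t)=\lambda_{\chi_2}(t)$, since every other function in the claim is expressed through $\lambda$, $\lambda'$, $\mathrm{e}^{-\lambda}$ and the auxiliary functions $\lamerror$, $\Herror$, $u$ via the explicit formulas of Claims~\ref{cl:properties_lamerror}, \ref{cl:H_error_explicit} and~\ref{cl:w_error_explicit}. Recall from the proof of Claim~\ref{cl:lambda_bounds} that $\lambda=\psi^{-1}$ with $\psi(\lambda)=\int_0^\lambda \mathrm{e}^s/(1+s)\,\mathrm{d}s$. Since $\psi'(\lambda)=\mathrm{e}^\lambda/(1+\lambda)$ and $\frac{\mathrm{d}}{\mathrm{d}\lambda}\bigl(\mathrm{e}^\lambda/(1+\lambda)\bigr)=\lambda\,\mathrm{e}^\lambda/(1+\lambda)^2$, l'H\^{o}pital's rule gives $\psi(\lambda)\sim \mathrm{e}^\lambda/(1+\lambda)$ as $\lambda\to\infty$. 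Plugging $t=\psi(\lambda(t))$, taking logarithms, and using $\lambda(t)=\Theta(\ln t)$ (Claim~\ref{cl:lambda_bounds}) gives $\ln t=\lambda(t)-\ln(1+\lambda(t))+o(1)=\lambda(t)\,(1+o(1))$, which is (a); then $\mathrm{e}^{\lambda(t)}\sim t\,(1+\lambda(t))\sim t\ln t$, which is (b). As corollaries one records $\mathrm{e}^{-\lambda(t)}\sim 1/(t\ln t)$ and $\lambda'(t)=\mathrm{e}^{-\lambda(t)}(1+\lambda(t))\sim 1/t$ by~\eqref{eq:lambda_2reg_ode}; these two estimates, together with $\lambda'(t)\in(0,1]$, are the workhorses for the remaining parts.

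For (c) and (d) I would substitute the bound $0\le\lamerror(t)\le\frac12\sum_{k=1}^{\Delta-2}\frac{\lambda^k(t)}{k!}s_{k+2}=\mathcal{O}\bigl((\ln t)^{\Delta-2}\bigr)$ from~\eqref{eq:lam_error_bounds} and the ODE~\eqref{eq:lam_error_ode} for $\lamerror'$ into the explicit formulas~\eqref{eq:H_error_formula} and~\eqref{eq:H_error_int}. It is then a degree count: every summand of the bracket defining $\Herror$ in~\eqref{eq:H_error_formula} carries a factor $\mathrm{e}^{-\lambda(t)}$ or $\lambda'(t)$ (both $\mathcal{O}(1/t)$ up to a $\log$ factor) times a polynomial in $\lambda(t)$ of degree at most $\Delta-1$ (inside $\lamerror'$, via~\eqref{eq:lam_error_ode}) and at most $\Delta-2$ elsewhere, so the bracket is $\mathcal{O}\bigl((\ln t)^{\Delta-2}/t\bigr)$; multiplying by the outer factor $\mathrm{e}^{-\lambda(t)}=\mathcal{O}(1/(t\ln t))$ gives $\Herror(t)=\mathcal{O}\bigl((\ln t)^{\Delta-3}/t^2\bigr)$. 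The bracket in~\eqref{eq:H_error_int} has degree at most $\Delta-2$, so the same reasoning yields $\Herrorint(t)=\mathcal{O}\bigl((\ln t)^{\Delta-3}/t\bigr)$; alternatively, (d) follows from (c) by integrating over $[t,\infty)$.

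For (e) and (g) the key sublemma is $\int_0^t \frac{\lambda'(s)-1}{\lambda^2(s)}\,\mathrm{d}s\sim -\,t/(\ln t)^2$: since $\lambda'(s)\to 0$ and $\lambda(s)\sim\ln s$, the integrand is $\sim -1/(\ln s)^2$, and $\int^t \mathrm{d}s/(\ln s)^2\sim t/(\ln t)^2$ (l'H\^{o}pital, differentiating $t/(\ln t)^2$); as this integral diverges, asymptotic equivalence of integrands transfers to the integrals. Combining with (a) and formula~\eqref{eq:w_error_ivp_solution_aux_functions} gives $u(t)=1+\lambda(t)\int_0^t\frac{\lambda'(s)-1}{\lambda^2(s)}\,\mathrm{d}s\sim \ln t\cdot\bigl(-t/(\ln t)^2\bigr)=-t/\ln t$, which is (e). In the formula for $u'(t)$ in~\eqref{eq:werror_u_derivative}, the first term is $\lambda'(t)\int_0^t(\cdots)\,\mathrm{d}s\sim -1/(\ln t)^2$ while the second is $(\lambda'(t)-1)/\lambda(t)\sim -1/\ln t$, so the second dominates and $u'(t)\sim -1/\ln t$, which is (g).

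Finally, for (f) and (h) I would start from the decompositions~\eqref{eq:w_error_ivp_solution} and~\eqref{eq:werror_u_derivative}: since part (c) of Claim~\ref{cl:w_error_bounds} gives $\werrorsecond(t)\to\Upsilon_{\underline{r}}$, the terms $\werrorsecond(t)u(t)$ and $\werrorsecond(t)u'(t)$ are $\sim-\Upsilon_{\underline{r}}\,t/\ln t$ and $\sim-\Upsilon_{\underline{r}}/\ln t$ by (e), (g); it remains to check that $\werrorfirst(t)\lambda(t)$ and $\werrorfirst(t)\lambda'(t)$ are of strictly smaller order. Plugging the bounds $u(s)=\mathcal{O}(s/\ln s)$, $\Herror(s)=\mathcal{O}\bigl((\ln s)^{\Delta-3}/s^2\bigr)$ from (c), (e) and $\lambda(s)=\mathcal{O}(\ln s)$ into~\eqref{eq:w_error_ivp_solution_aux_functions} gives $|u(s)\Herror(s)\lambda(s)|=\mathcal{O}\bigl((\ln s)^{\Delta-3}/s\bigr)$, hence (splitting the integral at a fixed $T$ and using $\int_T^t(\ln s)^{\Delta-3}/s\,\mathrm{d}s=\mathcal{O}((\ln t)^{\Delta-2})$) $\werrorfirst(t)=\mathcal{O}\bigl((\ln t)^{\Delta-2}\bigr)$; therefore $\werrorfirst(t)\lambda(t)=\mathcal{O}\bigl((\ln t)^{\Delta-1}\bigr)=o(t/\ln t)$ and $\werrorfirst(t)\lambda'(t)=\mathcal{O}\bigl((\ln t)^{\Delta-2}/t\bigr)=o(1/\ln t)$, proving (f) and (h). Uniformity in $\underline{r}\in\Rspace$ comes for free: all the $\mathcal{O}$-estimates above are uniform in $\underline{r}$ by the convention adopted before the claim, the convergence $\werrorsecond(t)\to\Upsilon_{\underline{r}}$ is uniform by Claim~\ref{cl:w_error_bounds}, and $\Upsilon_{\underline{r}}=\sum_{k=3}^\Delta k(k-2)r_k\in[3,\Delta(\Delta-2)]$ is bounded away from $0$ and $\infty$, so dividing by $-\Upsilon_{\underline{r}}t/\ln t$, resp.\ $-\Upsilon_{\underline{r}}/\ln t$, preserves uniformity. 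I expect the only genuinely delicate point to be the bookkeeping of the $\log$-polynomial degrees in (c), (d) and in $\werrorfirst$, together with making sure each ``$\sim$ through an integral'' step is legitimate (the relevant integrals diverge, and the integrands are continuous up to $t=0$, the latter already checked for $u$ in Claim~\ref{cl:w_error_explicit}).
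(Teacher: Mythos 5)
Your proposal is correct and follows essentially the same route as the paper: exact asymptotics for $\lambda$, $\lambda'$, $\mathrm{e}^{-\lambda}$, then degree-counting in $\ln t$ on the explicit formulas \eqref{eq:H_error_formula}, \eqref{eq:H_error_int}, \eqref{eq:w_error_ivp_solution_aux_functions}, and finally the decomposition $\werror = \werrorfirst\lambda + \werrorsecond u$ with $\werrorsecond \to \Upsilon_{\underline{r}}$ (Claim~\ref{cl:w_error_bounds}) dominating and $\werrorfirst(t)=\mathcal{O}((\ln t)^{\Delta-2})$ subdominant. The only small variations: for (a)--(b) you invert $t=\psi(\lambda(t))$ via $\psi(\lambda)\sim\mathrm{e}^\lambda/(1+\lambda)$ and take logarithms (using the a priori $\Theta(\ln t)$ bound from Claim~\ref{cl:lambda_bounds}), whereas the paper applies l'H\^opital's rule directly to $\lambda(t)/\ln t$; and you obtain $\lamerror(t)=\mathcal{O}((\ln t)^{\Delta-2})$ straight from the a priori bound \eqref{eq:lam_error_bounds} rather than re-deriving it from the integral formula $\lamerror = \lambda'\cdot J_{\underline{r}}$ as the paper does. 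Both shortcuts are harmless and arguably cleaner; the rest, including your treatment of uniformity via the uniform $\mathcal{O}$-bounds, the uniform convergence of $\werrorsecond$, and $\Upsilon_{\underline{r}}\in[3,\Delta(\Delta-2)]$, matches the paper's argument.
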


\begin{proof}
All of these asymptotic relations follow from the explicit formulas that we have already described (see e.g.\ \eqref{eq:H_error_formula}, \eqref{eq:H_error_int}, \eqref{eq:w_error_ivp_solution})
with the help of standard calculus methods, e.g.\ l'H\^{o}pital's rule. For the sake of completeness, we detail these calculations below.
\begin{enumerate}[label=(\alph*)]
\item First, observe that, using l'H\^{o}pital's rule, we obtain $\lambda(t) \sim \ln t$, because
\begin{equation*}
\lim \limits_{t \to \infty} \frac{t}{\mathrm{e}^{\lambda(t)}} \stackrel{\eqref{eq:lambda_2reg_ode}}{=} \lim \limits_{t \to \infty} \frac{1}{1+\lambda(t)} = 0,
\text{ therefore, }
\lim \limits_{t \to \infty} \frac{\lambda(t)}{\ln t} \stackrel{\eqref{eq:lambda_2reg_ode}}{=} \lim \limits_{t \to \infty} \frac{t(1+\lambda(t))}{\mathrm{e}^{\lambda(t)}} = \lim \limits_{t \to \infty} \frac{\lambda(t) + t \lambda'(t)}{1+\lambda(t)} = 1.
\end{equation*}
\item We use l'H\^{o}pital's and $(\mathrm{a})$ : $\lim \limits_{t \to \infty} \frac{\mathrm{e}^{\lambda(t)}}{t \ln t} \stackrel{\eqref{eq:lambda_2reg_ode}}{=} \lim \limits_{t \to \infty} \frac{1+\lambda(t)}{\ln t + 1} \stackrel{\eqref{eq:asymp_behavior}(\mathrm{a})}{=}1$, i.e., $\mathrm{e}^{\lambda(t)} \sim t \ln t$ holds.

\item In order to study the asymptotic behavior of $\Herror(t)$, we use the formula \eqref{eq:H_error_formula}. Therefore, we need to study the function~$\lamerror(t)$ (see~\eqref{eq:lam_error_def}). Let us introduce the notation $J_{\underline{r}}(t)$ for the integral that appears in the explicit formula of the function~$\lamerror(t)$ (see~\eqref{eq:lam_error_solution}), i.e., $J_{\underline{r}}(t):=\int_0^{t} \frac{1}{1+\lambda(s)} \cdot \left( \sum_{k=2}^{\Delta-1} \frac{\lambda^k(s)}{k!} \cdot s_{k+1}\right) \, \mathrm{d}s$.
Using l'H\^{o}pital's rule and~\eqref{eq:asymp_behavior}(a), it can be shown that
\begin{equation}\label{eq:J_r_t_O}
J_{\underline{r}}(t) = \int_1^{t} \mathcal{O} \left(\ln^{\Delta-2}(s)\right)\, \mathrm{d}s=
\mathcal{O} \left( t \cdot (\ln t)^{\Delta-2}\right).
\end{equation}
Note also that~\eqref{eq:asymp_behavior}(a),~(b) with~\eqref{eq:lambda_2reg_ode} imply that
\begin{equation}\label{eq:lam_diff_asymp}
\lambda'(t) \sim \frac 1t.
\end{equation}

Therefore, we obtain 
\begin{equation}\label{eq:lam_error_asymp}
\lamerror(t) \stackrel{\eqref{eq:lam_error_solution}}{=} \lambda'(t) \cdot J_{\underline{r}}(t) \stackrel{\eqref{eq:J_r_t_O}, \eqref{eq:lam_diff_asymp}}{=} \mathcal{O} \left( (\ln t)^{\Delta-2} \right).
\end{equation}

Moreover, \eqref{eq:lam_error_ode} with \eqref{eq:asymp_behavior}(a),~(b) and \eqref{eq:lam_error_asymp} implies that
\begin{equation}\label{eq:lam_error_diff_asymp}
\lamerror'(t) = \mathcal{O} \left( \frac{(\ln t)^{\Delta - 2}}{t} \right).
\end{equation}
Now we can conclude \eqref{eq:asymp_behavior}(c) by \eqref{eq:H_error_formula}, \eqref{eq:lam_error_asymp}, \eqref{eq:lam_error_diff_asymp}, \eqref{eq:lam_diff_asymp}, \eqref{eq:asymp_behavior}(a) and~(b).

\item We can prove \eqref{eq:asymp_behavior}(d) similarly, using the explicit formula of $\Herrorint(t)$ (see \eqref{eq:H_error_int}).

\item Let us introduce the notation $L(t)$ for the integral that appears in the definition of $u(t)$ (see~\eqref{eq:w_error_ivp_solution_aux_functions}), i.e., $L(t):= \int_0^t \frac{\lambda'(s) - 1}{\lambda^2(s)} \, \mathrm{d}s$. Thus, we have ${u(t) = 1+\lambda(t)L(t)}$. Then using \eqref{eq:asymp_behavior}(a), we can deduce that
\begin{equation}\label{eq:u_asymp}
L(t) \sim -\frac{t}{(\ln t)^2},
\quad \text{therefore,} \quad
u(t) \sim -\frac{t}{\ln t}.
\end{equation}

\item In order to study the asymptotic behavior of $\werror(t)$, we use the formula \eqref{eq:w_error_ivp_solution}. By \eqref{eq:w_error_ivp_solution_aux_functions}, \eqref{eq:asymp_behavior}(a),~(c) and~(e), we obtain that
\begin{equation}\label{eq:werror_first_asymp}
\werrorfirst(t) = \mathcal{O}\left( (\ln t )^{\Delta-2}\right).
\end{equation}

Therefore, \eqref{eq:w_error_2nd_limit}, \eqref{eq:asymp_behavior}(e) and \eqref{eq:werror_first_asymp} imply that $\werrorfirst(t)\lambda(t) \ll \werrorsecond(t)u(t)$ uniformly in $\underline{r} \in \Rspace$, and hence by \eqref{eq:w_error_ivp_solution}, we obtain that~{\eqref{eq:asymp_behavior}(f)} holds uniformly in $\underline{r} \in \Rspace$.

\item The asymptotic relation \eqref{eq:asymp_behavior}(g) follows from \eqref{eq:werror_u_derivative} and \eqref{eq:u_asymp}.

\item Finally, from \eqref{eq:werror_u_derivative}, \eqref{eq:werror_first_asymp}, \eqref{eq:lam_diff_asymp}, \eqref{eq:w_error_2nd_limit} and \eqref{eq:asymp_behavior}(g) we obtain that~\eqref{eq:asymp_behavior}(h) holds uniformly in~${\underline{r} \in \Rspace}$.\qedhere
\end{enumerate}
\end{proof}

Now we are ready to prove Lemma~\ref{lem:monotonicity}.

\begin{proof}[Proof of Lemma~\ref{lem:monotonicity}]
Recall the notion of $\varepsilon$ and $\underline{r}=(r_k)_{k=2}^\Delta$ that corresponds to $\underline{p}$ from~\eqref{eq:eps_rk_def}.
Recall the formula of~$\werror'(t)$ from~\eqref{eq:werror_u_derivative}. In order to prove~\eqref{eq:w_error_derivative_goal}, it is enough to show that there exists~${\varepsilon_0>0}$ such that for any ${\underline{p} \in \nearreg{\varepsilon_0}}$ and ${t \in [0,\toverone]}$ both $\varepsilon \cdot \overline{K} \cdot |\werrorfirst(t)\lambda'(t)|$ and $\varepsilon \cdot \overline{K} \cdot |\werrorsecond(t)u'(t)|$ are less than $\lambda'(t)/2$. First, we show that
\begin{equation}\label{eq:monotonicity_1st_term_goal}
\varepsilon \cdot \overline{K} \cdot |\werrorfirst(t) \lambda'(t)| \le \frac 12 \lambda'(t), 
\quad \text{i.e.,} \quad
\varepsilon \cdot \overline{K} \cdot |\werrorfirst(t)| \le \frac 12.
\end{equation}

Since the function $(\lambda'(t)-1)/\lambda^2(t)$ is bounded and $u(0)=1$, there exists $\widetilde{K} \in \mathbb{R}_+$ such that for any $t \in \mathbb{R}_+$, we have
\begin{equation}\label{eq:u_upper_bound}
|u(t)|
\stackrel{\eqref{eq:w_error_ivp_solution_aux_functions}}{\le}
1 + \lambda(t) \cdot \widetilde{K} \cdot t.
\end{equation}

Therefore, for any $\underline{p} \in \nonreg$, we have
\begin{equation}\label{eq:werrorfirst_upp_bound1}
|\werrorfirst(t)|
\stackrel{\eqref{eq:w_error_ivp_solution_aux_functions}, \, \eqref{eq:u_upper_bound}, \, \eqref{eq:H_error_abs_bound}}{\le}
\int \limits_0^t (1 + \lambda(s) \cdot \widetilde{K} \cdot s) \cdot 2\lambda'(s) \mathrm{e}^{-\lambda(s)} \cdot \sum_{k=0}^{\Delta-1} \frac{\lambda^k(s)}{k!} \cdot \lambda(s) \, \mathrm{d}s.
\end{equation}

Hence, for any $t \in [0, \toverone]$, we obtain
\begin{align}
\begin{split}\label{eq:werrorfirst_upp_bound2}
|\werrorfirst(t)| \stackrel{\eqref{eq:werrorfirst_upp_bound1},\, \eqref{eq:lambda_2reg_ode},\, \eqref{eq:lam_grows_logarithmic}}{\le}
&4\widetilde{K}\Delta \cdot \lambda^{\Delta+2}(\toverone) \int \limits_0^t \frac{1}{(s+1)} \, \mathrm{d}s = \\
&4\widetilde{K}\Delta \cdot \lambda^{\Delta+2}(\toverone) \cdot \ln (t+1) \stackrel{\eqref{eq:lam_grows_logarithmic}}{\le} 4\widetilde{K}\Delta \cdot \lambda^{\Delta+3}(\toverone).
\end{split}
\end{align}

By \eqref{eq:lambda_universal_bounds}(b), \eqref{eq:werrorfirst_upp_bound2} implies \eqref{eq:monotonicity_1st_term_goal} if $\varepsilon_0$ is small enough.

Now let us consider the term $|\werrorsecond(t)u'(t)|$. For any $\underline{p} \in \nonreg$, we have
\begin{equation}\label{eq:monotonicity_2nd_term_bound1}
\frac{|\werrorsecond(t)u'(t)|}{\lambda'(t)} \stackrel{\eqref{eq:w_error_2nd_bound}}{\le} \frac{2\Delta^2(\Delta+1)|u'(t)|}{\lambda'(t)}\stackrel{\eqref{eq:lambda_2reg_ode},\, \eqref{eq:asymp_behavior}}{\sim} 2\Delta^2(\Delta+1) \cdot \frac{t}{\ln t}.
\end{equation}

Therefore, there exists $T \in \mathbb{R}_+$ such that for any $t \in [T, \toverone]$, we have
\begin{equation}\label{eq:monotonicity_2nd_term_bound2}
\varepsilon \cdot \overline{K} \cdot \frac{|\werrorsecond(t)u'(t)|}{\lambda'(t)} \stackrel{\eqref{eq:monotonicity_2nd_term_bound1}}{\le} 4\overline{K}\Delta^2(\Delta+1) \cdot \varepsilon \cdot\frac{\toverone}{\ln (\toverone)} \stackrel{\eqref{eq:tc_approx}, \, \eqref{eq:distribution_with_eps}}{=} \frac{8\overline{K}\Delta^2(\Delta+1)}{\Upsilon_{\underline{r}} \cdot \ln (\toverone)}.
\end{equation}

Note that $\toverone \to \infty$ as $\varepsilon \to 0$ (see \eqref{eq:tc_approx}) and $\Upsilon_{\underline{r}} \ge 3$ for any $\underline{r} \in \Rspace$ (see~\eqref{eq:Upsilon_r_def}). Therefore, \eqref{eq:monotonicity_2nd_term_bound2} shows that if~$\varepsilon$ is small enough, then on the interval ${[T, \toverone]}$, we have ${\varepsilon \cdot \overline{K} \cdot |\werrorsecond(t)u'(t)| \le \lambda'(t)/2}$ . On the other hand, since the function ${2\Delta^2(\Delta+1)|u'(t)|}$ is bounded on the compact interval $[0,T]$, we can choose~$\varepsilon$ in a way that ${\varepsilon \cdot \overline{K} \cdot |\werrorsecond(t)u'(t)| \le \lambda'(t)/2}$ holds also on the interval $[0,T]$. The proof of Lemma~\ref{lem:monotonicity} is complete.
\end{proof}

\subsubsection{Inequalities for linear approximations}\label{sss:inequalities}

In Section~\ref{sss:inequalities} we prove Lemma~\ref{lem:sufficient_inequalities}, which is the final step in the proof of our main result. We introduce the difference functions~$\diffunder$ and~$\diffover$ that measure the gap between the two sides of the inequalities~\eqref{eq:w_under_derivative_goal} and~\eqref{eq:w_over_derivative_goal}, respectively (cf.~Notation~\ref{not:diff_under_over} below). By utilizing the explicit formulas for the functions that appear in the inequalities~\eqref{eq:w_under_derivative_goal}, \eqref{eq:w_over_derivative_goal}, i.e., for $\wunder$, $\wover$ and the survival functions of $\tauunder{\delta/3}$ and $\tauover{\delta/3}$, we can expand the difference functions~$\diffunder$ and~$\diffover$ in terms of~$\varepsilon$. Using that~$\werror$ is defined as the solution of the ODE~\eqref{eq:w_error_ivp}, the leading-order terms in these expansions cancel out. This leaves only residual error terms that can be controlled using the asymptotic relations established in Section~\ref{sss:asymp_prop}.

\begin{notation}[Difference functions]\label{not:diff_under_over}
Let $\delta \in (0, 1)$ and $\underline{p} \in \nonreg$. Let us define
\begin{align}
\diffunder(t) &:= \wunder'(t) - \ev \left( \wunder(\tauover{\delta/3}) \cdot \indic{t < \tauover{\delta/3}} \right), \label{eq:diff_under_def}\\
\diffover(t) &:= \wover'(t) - \ev \left( \wover(\tauunder{\delta/3}) \cdot \indic{t < \tauunder{\delta/3}} \right). \label{eq:diff_over_def}
\end{align}
\end{notation}

In order to prove Lemma~\ref{lem:sufficient_inequalities}, we need to show that $\diffunder(t) \ge 0$ for any $t \in [0,\tunder]$ and $\diffover(t) \le 0$ for any $t \in [0,\tover]$. For that purpose, we use the next lemma.

\begin{lemma}[Properties of the difference functions]\label{lem:diff_functions_prop}
For any $\delta \in (0, 1)$, there exists $\varepsilon_0>0$ such that for any $\underline{p} \in \nearreg{\varepsilon_0}$, we have
\begin{align}
\mathrm{(a)}\quad
\diffunder(\tunder) &\ge 0,
\qquad\qquad\quad
\mathrm{(b)}\quad
\int \limits_t^{\tunder} \diffunder'(s) \, \mathrm{d}s \le 0
\quad \text{for any} \quad t \in [0,\tunder],\label{eq:difference_under_prop}\\
\mathrm{(a)}\quad
\diffover(\tover) &\le 0,
\qquad\qquad\quad
\mathrm{(b)}\quad
\int \limits_t^{\tover} \diffover'(s) \, \mathrm{d}s \ge 0
\quad \text{for any} \quad t \in [0,\tover].\label{eq:difference_over_prop}
\end{align}
\end{lemma}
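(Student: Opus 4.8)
The plan is to prove the four claims by the same strategy: first establish the endpoint inequalities \eqref{eq:difference_under_prop}(a) and \eqref{eq:difference_over_prop}(a) by a direct computation using the explicit formulas, then prove the integral inequalities \eqref{eq:difference_under_prop}(b) and \eqref{eq:difference_over_prop}(b) by analyzing $\diffunder'(s)$ and $\diffover'(s)$. I will focus on $\diffunder$; the argument for $\diffover$ is symmetric with the roles of the upper/lower bounds swapped and the inequality signs reversed.

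\textbf{Step 1: expand the difference function in $\varepsilon$.} Using \eqref{eq:diff_under_def}, \eqref{eq:w_approx} and the fact that $\tauover{\delta/3}$ has survival function $\Hintover{\delta/3}(t) = I(t) + (1+\delta/3)\varepsilon \Herrorint(t)$ on $[0,\toverone]$ (cf.~\eqref{eq:tau_pm_def}, \eqref{eq:H_int_under_over_def}), I would write $\ev(\wunder(\tauover{\delta/3})\indic{t<\tauover{\delta/3}})$ as a Riemann--Stieltjes integral $-\int_t^{\toverone}\wunder(s)\,\mathrm{d}\Hintover{\delta/3}(s)$ plus a boundary term at $\toverone$ coming from the atom $\pr(\tauover{\delta/3}=\toverone)$. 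Substituting $\wunder(t) = \lambda(t) + (1+\delta/2)\varepsilon\werror(t)$ for $t\le\tunder$ and expanding, the $\varepsilon^0$ term is $\lambda'(t) - \ev(\lambda(\tau_{\chi_2})\indic{t<\tau_{\chi_2}})$, which vanishes identically because $\lambda = w_{\chi_2}$ is the principal eigenfunction of $T_{\chi_2,\infty}$ (Claim~\ref{cl:2reg_eigenfunction}, Proposition~\ref{prop:principal_eigenfn_ODE}); equivalently, differentiating \eqref{eq:goal_with_expectation} at $\varepsilon=0$. The $\varepsilon^1$ coefficient is a combination of terms: one piece $(1+\delta/2)(\werror'(t) - \ev(\werror(\tau_{\chi_2})\indic{t<\tau_{\chi_2}}))$ which vanishes because $\werror$ solves the ODE~\eqref{eq:w_error_ivp} (this is exactly the linearization of the eigenfunction equation), one piece $-(1+\delta/3)(\ldots)$ coming from the $\varepsilon\Herrorint$ correction to the law of $\tauover{\delta/3}$, and the boundary terms. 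The key algebraic point is that after these cancellations the surviving $\varepsilon^1$ term has a definite sign: the mismatch between the coefficient $1+\delta/2$ (in $\wunder$) and $1+\delta/3$ (in $\tauover{\delta/3}$) produces a term proportional to $(\delta/2-\delta/3)\varepsilon\cdot(\text{positive quantity}) = \tfrac{\delta}{6}\varepsilon\cdot(\text{positive})$, which gives $\diffunder(\tunder)\ge 0$ once $\varepsilon$ is small enough to absorb the $\mathcal{O}(\varepsilon^2)$ remainder. Here I would use Claim~\ref{cl:w_error_bounds} and Claim~\ref{cl:asymp} to bound the remainder and Claim~\ref{cl:H_error_explicit} (positivity of $\Herrorint$) for the sign.

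\textbf{Step 2: the integral inequality via the derivative.} For \eqref{eq:difference_under_prop}(b) I would differentiate \eqref{eq:diff_under_def}: $\diffunder'(t) = \wunder''(t) + \wunder(t)\cdot h_{\tauover{\delta/3}}(t)$ where $h_{\tauover{\delta/3}}$ is the density of $\tauover{\delta/3}$, using $\frac{\mathrm{d}}{\mathrm{d}t}\ev(\varphi(X)\indic{t<X}) = -\varphi(t)\cdot(\text{density of }X\text{ at }t)$. On $[0,\tunder]$ we have $\wunder''(t) = \lambda''(t) + (1+\delta/2)\varepsilon\werror''(t) = -H(t)\lambda(t) - (1+\delta/2)\varepsilon(H(t)\werror(t)+\Herror(t)\lambda(t))$ by \eqref{eq:w_p_ivp} for $\chi_2$ and \eqref{eq:w_error_ivp}; and the density of $\tauover{\delta/3}$ is $-\Hintover{\delta/3}\,'(t) = H(t) + (1+\delta/3)\varepsilon\Herror(t) + \mathcal{O}(\varepsilon^2)$ (cf.~\eqref{eq:H_int_bounds_derivatives}, \eqref{eq:H_error_formula}). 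Multiplying out, the $\varepsilon^0$ terms cancel ($-H\lambda + \lambda H = 0$) and the $\varepsilon^1$ terms mostly cancel, leaving $\diffunder'(t) = \varepsilon\cdot(\delta/3-\delta/2)\cdot\lambda(t)\Herror(t) + (\text{lower order / sign-definite}) + \mathcal{O}(\varepsilon^2)$. Since $\Herror$ changes sign, this pointwise expression is not sign-definite, so I cannot conclude $\diffunder'\le 0$ pointwise; instead I integrate from $t$ to $\tunder$. Integrating $\lambda\Herror$ against $\mathrm{d}s$ and using $\Herrorint' = -\Herror$ together with integration by parts converts $\int_t^{\tunder}\lambda(s)\Herror(s)\,\mathrm{d}s$ into $[-\lambda\Herrorint]_t^{\tunder} + \int_t^{\tunder}\lambda'\Herrorint\,\mathrm{d}s$, both of whose signs are controlled by the positivity of $\Herrorint$ (Claim~\ref{cl:H_error_explicit}) and the monotonicity of $\lambda$. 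The remaining error terms are bounded uniformly using Claim~\ref{cl:w_error_bounds}, Claim~\ref{cl:asymp}, and Lemma~\ref{lem:monotonicity} (with $\overline{K}$ chosen large depending on $\delta$, as flagged in the remark preceding that lemma), and the factor $\tover - t \le \toverone = \mathcal{O}(1/\varepsilon)$ times $\mathcal{O}(\varepsilon^2)$ contributions stays $\mathcal{O}(\varepsilon)$, dominated by the leading $\Theta(\varepsilon)$ sign-definite term.

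\textbf{Expected main obstacle.} The delicate part is bookkeeping the $\varepsilon$-expansion so that one genuinely sees the leading surviving term has the right sign \emph{and} is of strictly larger order than the error, uniformly over $\underline{r}\in\Rspace$ and over $t\in[0,\toverone]$ — note $\toverone$ itself grows like $1/\varepsilon$, so an error that is $\mathcal{O}(\varepsilon^2)$ pointwise can integrate up to $\mathcal{O}(\varepsilon)$ and threaten to swamp a leading term that is also merely $\mathcal{O}(\varepsilon)$ in integrated form. Making this work requires the more refined asymptotics of Claim~\ref{cl:asymp}(c),(d),(f),(h) (the $(\ln t)^{\Delta-3}/t$-type decay of $\Herror,\Herrorint$ and the $1/\ln t$ decay of $\werror'$) rather than crude bounds, so that the integrated leading term is $\Theta(\delta\cdot\varepsilon)$ while the integrated error is $o(\varepsilon)$. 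Choosing the constant $\overline{K}=\overline{K}(\delta)$ in Lemma~\ref{lem:monotonicity} appropriately is exactly what makes $\wunder,\wover$ monotone (needed so the stochastic-dominance step in the proof of Lemma~\ref{lem:eigenfunction_bounds} applies) while keeping all these error terms under control.
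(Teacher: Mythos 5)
Your overall architecture is right — expand everything in $\varepsilon$, use the ODEs for $\lambda$ and $\werror$ to cancel low-order terms, and handle the surviving integral via integration by parts and the positivity of $\Herrorint$. Step~2 (the derivative/integral inequality) in particular correctly locates the leading sign-definite contribution $-\varepsilon\frac{\delta}{6}\lambda(t)\Herror(t)$ coming from the mismatch $(1+\delta/2)-(1+\delta/3)=\delta/6$; this matches the paper's equation~\eqref{eq:diff_under_derivative}, and the paper isolates the resulting integral estimate as Lemma~\ref{lem:diff_functions_2nd_lemma}.

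However, Step~1 (the endpoint inequality~\eqref{eq:difference_under_prop}(a)) contains a genuine error. You assert that the $\varepsilon^0$ coefficient of $\diffunder(\tunder)$ is $\lambda'(t)-\ev(\lambda(\tau_{\chi_2})\indic{t<\tau_{\chi_2}})$ and vanishes identically. It does not, because $\wunder$ is truncated at $\tunder$ while $\lambda=w_{\chi_2}$ is the eigenfunction of $T_{\chi_2,\infty}$, not $T_{\chi_2,\tunder}$. A direct computation (cf.~\eqref{eq:diff_under_a_with_eps}) shows the $\varepsilon^0$ part of $\diffunder(\tunder)$ equals $\mathrm{e}^{-\lambda(\tunder)}>0$, and after the natural rescaling by $\mathrm{e}^{\lambda(\tunder)}$ this constant becomes~$1$, which is of the \emph{same} order as the rescaled $\varepsilon^1$ contributions (indeed $\varepsilon\mathrm{e}^{\lambda(\tunder)}\werror'(\tunder)\to -(1-\delta)\cdot 1$, via Claim~\ref{cl:diff_functions_1st_lemma}). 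The sign at the endpoint therefore comes from $1-(1+\delta/2)(1-\delta)=\delta/2+\delta^2/2>0$, i.e.\ the interplay between the $(1+\delta/2)$ factor in $\wunder$ and the $(1-\delta)$ factor hidden in the definition of $\tunder$ — and \emph{not} from the $\delta/2$ vs.\ $\delta/3$ mismatch, since the $(1+\delta/3)$ terms only enter the endpoint expression through factors that vanish as $\varepsilon\to 0$ (cf.~\eqref{eq:aux_limit2}, \eqref{eq:aux_limit3}). Logically, even if the $\delta/6$ mismatch controls $\diffunder'$, a sign-definite derivative tells you about monotonicity, not about the value at the endpoint, so Step~1 cannot be derived from the Step~2 mechanism. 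Finally, in Step~2 you gloss over the boundary term $-\lambda(\tunder)\Herrorint(\tunder)$ from integration by parts, which has the wrong sign; the paper deals with it by splitting into $t\le\theta$ and $t>\theta$ using Claim~\ref{cl:Herror_positivity} (eventual positivity of $\Herror$) and the decay estimates of Claim~\ref{cl:asymp} — this case analysis is not just bookkeeping, and your sketch would need it to close the argument.
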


Before proving Lemma~\ref{lem:diff_functions_prop}, we first derive Lemma~\ref{lem:sufficient_inequalities} from it.

\begin{proof}[Proof of Lemma~\ref{lem:sufficient_inequalities}]
Let $\delta \in (0, 1)$ and $\varepsilon_0>0$ given by Lemma~\ref{lem:diff_functions_prop}. For any $\underline{p} \in \nearreg{\varepsilon_0}$, we have
\begin{align}
\diffunder(t) &= \diffunder(\tunder) - \int \limits_t^{\tunder} \diffunder'(s) \, \mathrm{d}s \stackrel{\eqref{eq:difference_under_prop}}{\ge} 0
\quad \text{for any} \quad t \in [0,\tunder], \label{eq:diff_under_nonneg} \\
\diffover(t) &= \diffover(\tover) - \int \limits_t^{\tover} \diffover'(s) \, \mathrm{d}s \stackrel{\eqref{eq:difference_over_prop}}{\le} 0
\quad \text{for any} \quad t \in [0,\tover] \label{eq:diff_over_nonpos}.
\end{align}

By the definition of~$\diffunder(t)$ (see~\eqref{eq:diff_under_def}), inequality~\eqref{eq:diff_under_nonneg} is equivalent to~\eqref{eq:w_under_derivative_goal}, and by the definition of~$\diffover(t)$ (see~\eqref{eq:diff_over_def}), inequality~\eqref{eq:diff_over_nonpos} is equivalent to~\eqref{eq:w_over_derivative_goal}. The proof of Lemma~\ref{lem:sufficient_inequalities} is complete.
\end{proof}

Therefore, it remains to prove Lemma~\ref{lem:diff_functions_prop}. For that purpose, we will use the following two statements. In particular, the limits stated in Claim~\ref{cl:diff_functions_1st_lemma} will ensure that the inequalities~\eqref{eq:difference_under_prop}(a) and~\eqref{eq:difference_over_prop}(a) are satisfied, and the integral inequalities established in Lemma~\ref{lem:diff_functions_2nd_lemma} will turn out to be useful when we prove the inequalities~\eqref{eq:difference_under_prop}(b) and~\eqref{eq:difference_over_prop}(b).

Recall the notation~$\Upsilon_{\underline{r}}$, $\Herrorint(t)$ and~$\werror(t)$ from equation~\eqref{eq:Upsilon_r_def}, Definitions~\ref{def:H_approx} and~\ref{def:w_approx}, respectively.

\begin{claim}[Auxiliary limits]\label{cl:diff_functions_1st_lemma}
The following limits hold uniformly in~$\underline{r} \in \Rspace$:
\begin{align}
&\lim \limits_{t \to \infty} \frac 1t \cdot \left( \werror(t) - \mathrm{e}^{\lambda(t)} \cdot \werror'(t)\right) =\Upsilon_{\underline{r}},\label{eq:aux_limit}\\
&\lim \limits_{t \to \infty} \frac{1}{t} \cdot \mathrm{e}^{\lambda(t)} \cdot \lambda(t) \cdot \Herrorint(t) = 0,\label{eq:aux_limit2}\\
&\lim \limits_{t \to \infty} \frac{1}{t^2} \cdot \mathrm{e}^{\lambda(t)} \cdot \Herrorint(t) \cdot \werror(t) = 0.\label{eq:aux_limit3}
\end{align}
\end{claim}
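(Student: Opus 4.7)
The plan is to deduce all three limits essentially mechanically from the asymptotic relations already collected in Claim~\ref{cl:asymp}, namely
\begin{equation*}
\lambda(t)\sim \ln t,\quad \mathrm{e}^{\lambda(t)}\sim t\ln t,\quad \Herrorint(t)=\mathcal{O}\!\left(\tfrac{(\ln t)^{\Delta-3}}{t}\right),\quad \werror(t)\sim -\Upsilon_{\underline{r}}\tfrac{t}{\ln t},\quad \werror'(t)\sim -\Upsilon_{\underline{r}}\tfrac{1}{\ln t},
\end{equation*}
where the last two relations are uniform in $\underline{r}\in\Rspace$. Since $\Upsilon_{\underline{r}}=\sum_{k=3}^{\Delta} k(k-2)r_k$ depends continuously on $\underline{r}$ on the compact set $\Rspace$, and $\Upsilon_{\underline{r}}\ge 3$ (because $k(k-2)\ge 3$ for $k\ge 3$ and $\sum_{k\ge 3}r_k=1$), $\Upsilon_{\underline{r}}$ stays bounded away from $0$ and $\infty$, so the uniformity claims pose no additional difficulty.

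For \eqref{eq:aux_limit2}, I would just multiply the three asymptotic inputs to obtain
\begin{equation*}
\frac{1}{t}\,\mathrm{e}^{\lambda(t)}\,\lambda(t)\,\Herrorint(t)=\mathcal{O}\!\left(\frac{1}{t}\cdot t\ln t\cdot \ln t\cdot \frac{(\ln t)^{\Delta-3}}{t}\right)=\mathcal{O}\!\left(\frac{(\ln t)^{\Delta-1}}{t}\right)\to 0,
\end{equation*}
where the $\mathcal{O}$ constants are uniform in $\underline{r}\in\Rspace$ because the $\Herrorint$ bound in Claim~\ref{cl:asymp}(d) is. For \eqref{eq:aux_limit3}, combining additionally Claim~\ref{cl:asymp}(f) yields
\begin{equation*}
\frac{1}{t^{2}}\,\mathrm{e}^{\lambda(t)}\,\Herrorint(t)\,\werror(t)=\mathcal{O}\!\left(\frac{1}{t^{2}}\cdot t\ln t\cdot\frac{(\ln t)^{\Delta-3}}{t}\cdot\frac{t}{\ln t}\right)=\mathcal{O}\!\left(\frac{(\ln t)^{\Delta-3}}{t}\right)\to 0,
\end{equation*}
again uniformly in $\underline{r}$ since $\werror(t)/t$ is controlled uniformly by Claim~\ref{cl:asymp}(f).

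For the main limit \eqref{eq:aux_limit}, I would split
\begin{equation*}
\frac{1}{t}\left(\werror(t)-\mathrm{e}^{\lambda(t)}\werror'(t)\right)=\frac{\werror(t)}{t}-\frac{\mathrm{e}^{\lambda(t)}}{t}\cdot\werror'(t).
\end{equation*}
By Claim~\ref{cl:asymp}(f), $\werror(t)/t\sim -\Upsilon_{\underline{r}}/\ln t\to 0$, uniformly in $\underline{r}$. By Claim~\ref{cl:asymp}(b) and~(h), the product $\mathrm{e}^{\lambda(t)}\werror'(t)/t$ satisfies $\mathrm{e}^{\lambda(t)}/(t\ln t)\to 1$ (no $\underline{r}$-dependence) and $\ln(t)\cdot\werror'(t)\to -\Upsilon_{\underline{r}}$ uniformly in $\underline{r}$, so multiplying these two unit-valued ratios and using boundedness of $\Upsilon_{\underline{r}}$ on the compact $\Rspace$ I conclude $-\mathrm{e}^{\lambda(t)}\werror'(t)/t\to \Upsilon_{\underline{r}}$ uniformly. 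Summing the two terms gives $\Upsilon_{\underline{r}}$.

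There is no real obstacle here; the only point that requires a moment of care is turning the uniform-in-$\underline{r}$ asymptotic equivalences of Claim~\ref{cl:asymp} into a uniform-in-$\underline{r}$ convergence of a product, which works because $\Upsilon_{\underline{r}}$ is uniformly bounded above and below on $\Rspace$. Everything else is just plugging the asymptotic orders into the three expressions and observing that positive powers of $t$ dominate any power of $\ln t$.
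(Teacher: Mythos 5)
Your proposal is correct and takes the same approach as the paper: the paper's proof is the one-liner ``The statements follow from the asymptotic relations stated in Claim~\ref{cl:asymp}'', and your write-up simply fills in the intended substitutions of~\eqref{eq:asymp_behavior}(a)--(h) together with the routine observation that $\Upsilon_{\underline{r}}$ is bounded above and below on the compact set $\Rspace$, which is exactly what uniformity requires here.
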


\begin{proof}
The statements follow from the asymptotic relations stated in Claim~\ref{cl:asymp}.
\end{proof}

Recall the function $\Herror(t)$ from Definition~\ref{def:H_approx}.

\begin{lemma}[Integral of an error term]\label{lem:diff_functions_2nd_lemma}
For any $\delta \in (0,1)$ and $K \in \mathbb{R}$, there exists $\varepsilon_0>0$ such that for any $\underline{p} \in \nearreg{\varepsilon_0}$, we have
\begin{align}
&\int \limits_t^{\tunder} \left[\lambda(s)+\varepsilon K \werror(s) \right] \cdot \Herror(s) \, \mathrm{d}s \ge 0
\quad \text{for any} \quad t \in [0,\tunder],\label{eq:diff_aux_int_tunder_pos}\\
&\int \limits_t^{\tover} \left[\lambda(s)+\varepsilon K \werror(s) \right] \cdot \Herror(s) \, \mathrm{d}s \ge 0
\quad \text{for any} \quad t \in [0,\tover],\label{eq:diff_aux_int_tover_pos}
\end{align}
where the $\varepsilon$ and the $\underline{r} = (r_k)_{k=2}^\Delta$ corresponding to $\underline{p}$ are defined in~\eqref{eq:eps_rk_def}.
\end{lemma}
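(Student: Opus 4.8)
The plan is to integrate by parts, moving the sign–changing factor $\Herror$ onto its (non‑negative) antiderivative $\Herrorint$. Write $\phi(s):=\lambda(s)+\varepsilon K\werror(s)$ for the common factor in the integrands of \eqref{eq:diff_aux_int_tunder_pos}--\eqref{eq:diff_aux_int_tover_pos}, and let $T\in\{\tunder,\tover\}$. Since $\Herrorint'=-\Herror$ and $\Herrorint(0)=0$ (cf.\ \eqref{eq:H_error_int_at_0}),
\[
\int_t^T\phi(s)\Herror(s)\,\mathrm ds=\phi(t)\Herrorint(t)-\phi(T)\Herrorint(T)+\int_t^T\phi'(s)\Herrorint(s)\,\mathrm ds .
\]
By \eqref{eq:H_error_int} and \eqref{eq:lam_error_bounds}, using $s_3=1$ and the non‑negativity of the remaining terms, $\Herrorint(s)\ge\tfrac12\mathrm e^{-\lambda(s)}\lambda(s)>0$ for $s>0$. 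Applying Lemma~\ref{lem:monotonicity} with $\overline K>2(|K|+1)$ and integrating the bound $\varepsilon\overline K|\werror'|\le\lambda'$ gives $\varepsilon\overline K|\werror(s)|\le\lambda(s)$, so for $\varepsilon$ small one has $\phi\ge\tfrac12\lambda\ge0$ and $\phi'\ge\tfrac12\lambda'\ge0$ on $[0,T]$ (the few estimates of Claim~\ref{cl:lambda_bounds} and Lemma~\ref{lem:monotonicity} stated on $[0,\toverone]$ extend with adjusted constants to $[0,\tover]$, since $\tover\le 2\toverone=\mathcal O(1/\varepsilon)$). Hence in the displayed identity the first and third terms on the right are non‑negative, and only $-\phi(T)\Herrorint(T)$ can be negative; moreover $\phi(T)\Herrorint(T)\le\lambda(T)\Herrorint(T)\le 2\ln(2T+1)\cdot\mathcal O\!\big((\ln T)^{\Delta-3}/T\big)=\mathcal O\!\big(\varepsilon(\ln(1/\varepsilon))^{\Delta-2}\big)\to0$ uniformly in $\underline r$ (using Claim~\ref{cl:asymp}(d), $\lambda(T)\le2\ln(2T+1)$, and $T\to\infty$ uniformly because $\Upsilon_{\underline r}=\sum_k k(k-2)r_k\in[3,\Delta(\Delta-2)]$, cf.\ \eqref{eq:tc_approx}).

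The remaining point — absorbing this small negative term — rests on the structural fact that $\Herror$ is eventually \emph{positive}, uniformly in $\underline r$. Writing $\lamerror$ as a function of $\lambda$ via \eqref{eq:lam_error_solution} and $\lambda'=\mathrm e^{-\lambda}(1+\lambda)$, equation \eqref{eq:H_error_int} reads $\Herrorint(t)=\mathrm e^{-\lambda(t)}R_{\underline r}(\lambda(t))$, whence $\Herror(t)=\lambda'(t)\mathrm e^{-\lambda(t)}\big(R_{\underline r}(\lambda(t))-R_{\underline r}'(\lambda(t))\big)$, where $R_{\underline r}$ is an explicit function that is \emph{linear} in $\underline r=(r_3,\dots,r_\Delta)$ (it is built from $\sum_k\frac{\lambda^k}{k!}s_{k+2}$ and the $\underline r$‑linear integral defining $\lamerror$). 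A standard asymptotic computation (repeated integration by parts of the integral in \eqref{eq:lam_error_solution}) shows that for each of the finitely many pure directions $\underline r=e_d$ (i.e.\ $r_d=1$) one has $R_{e_d}(\lambda)-R_{e_d}'(\lambda)\to+\infty$ as $\lambda\to\infty$; since $R_{\underline r}-R_{\underline r}'=\sum_{d=3}^\Delta r_d\,(R_{e_d}-R_{e_d}')$ with $r_d\ge0$, $\sum_d r_d=1$, there is a threshold $\lambda_1$, independent of $\underline r\in\Rspace$, above which $R_{\underline r}-R_{\underline r}'>0$; equivalently $\Herror(s)\ge0$ for all $s\ge s_1$ and all $\underline r\in\Rspace$.

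Now split on $t$. If $t\in[s_1,T]$ (non‑empty once $\varepsilon$ is small enough that $T>s_1$), then $\phi\ge0$ and $\Herror\ge0$ on $[t,T]$, so $\int_t^T\phi\Herror\,\mathrm ds\ge0$ directly. If $t\in[0,s_1]$, then $\int_t^T\phi'\Herrorint\,\mathrm ds\ge\tfrac12\int_{s_1}^T\lambda'\Herrorint\,\mathrm ds\xrightarrow[\varepsilon\to0]{}\tfrac12\int_{s_1}^\infty\lambda'(s)\Herrorint(s)\,\mathrm ds$, which is a strictly positive quantity (as $\Herrorint>0$ on $(0,\infty)$) that is bounded below uniformly in $\underline r\in\Rspace$ by continuity and compactness; hence for $\varepsilon$ small this integral exceeds $\phi(T)\Herrorint(T)$, and the displayed identity yields $\int_t^T\phi\Herror\,\mathrm ds\ge\phi(t)\Herrorint(t)\ge0$. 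Taking $\varepsilon_0$ to be the minimum of the finitely many thresholds encountered above establishes both \eqref{eq:diff_aux_int_tunder_pos} and \eqref{eq:diff_aux_int_tover_pos}.

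I expect the main obstacle to be precisely the uniform‑in‑$\underline r$ eventual positivity of $\Herror$ (the threshold $s_1$): one must extract the leading behavior of $\lamerror$ as $\lambda\to\infty$ from the integral representation \eqref{eq:lam_error_solution}, check that the dominant term of $R_{\underline r}-R_{\underline r}'$ is positive in each pure direction, and deal with the degenerate directions in which the nominal top coefficient vanishes — the linearity of $R_{\underline r}$ in $\underline r$ is what reduces this to finitely many explicit one‑variable asymptotics. The rest (the integration by parts, the sign bookkeeping via Lemma~\ref{lem:monotonicity}, and the decay estimates from Claims~\ref{cl:lambda_bounds} and \ref{cl:asymp}) is routine.
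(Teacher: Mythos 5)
Your proposal is correct and takes essentially the same route as the paper: integrate by parts to transfer the sign‑changing factor $\Herror$ onto its non‑negative antiderivative $\Herrorint$; control $\phi=\lambda+\varepsilon K\werror$ and $\phi'$ from below by $\tfrac12\lambda$ and $\tfrac12\lambda'$ via Lemma~\ref{lem:monotonicity}; observe that the boundary term $\phi(T)\Herrorint(T)$ vanishes uniformly as $\varepsilon\to0$; and split on whether $t$ lies above or below the $\underline r$‑uniform threshold past which $\Herror$ is positive — exactly the content of the paper's Claim~\ref{cl:Herror_positivity}, and your sketch of its proof (linearity in $\underline r$, finitely many pure directions $\underline r^{(d)}$, leading term positive) coincides with the paper's argument. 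Two minor remarks: since $\delta\in(0,1)$ one already has $\tover\le\toverone$, so no extension of the bounds beyond $[0,\toverone]$ is needed; and for the lower bound on $\int\phi'\Herrorint$ you can avoid the continuity/compactness step by using the $\underline r$‑independent bound $\Herrorint(s)\ge\tfrac12\mathrm e^{-\lambda(s)}\lambda(s)$ over a fixed compact interval, which is what the paper does.
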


In the proof of Lemma~\ref{lem:diff_functions_2nd_lemma} we will treat the $t\leq \theta$ and the $t>\theta$ cases separately, where we define~$\theta$ in the following statement. 
Lemma~\ref{lem:diff_functions_2nd_lemma} will turn out to be an immediate consequence of Lemma~\ref{lem:monotonicity} if $t>\theta$.

\begin{claim}[Positivity of $\Herror$]\label{cl:Herror_positivity}
There exists $\theta \in \mathbb{R}_+$ such that $\Herror(t)>0$ for any $\underline{r} \in \Rspace$ and $t \ge \theta$.
\end{claim}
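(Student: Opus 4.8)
The plan is to reduce the problem to finitely many explicit cases by exploiting the affine-linear dependence of $\Herror$ on $\underline r$, and then to compute $\Herror$ asymptotically in each of those cases.

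For the reduction, the first observation is that, for each fixed $t$, the quantity $\Herror(t)$ is affine-linear in $(r_3,\dots,r_\Delta)$. Indeed, the tail sums $s_j=\sum_{d=j}^\Delta r_d$ with $j\ge 3$ are linear in $(r_3,\dots,r_\Delta)$, hence $\lamerror$ is linear in $(r_3,\dots,r_\Delta)$, being the solution of the linear initial value problem~\eqref{eq:lam_error_ode} whose source term $\sum_{k=2}^{\Delta-1}\frac{\lambda^k(t)}{k!}s_{k+1}$ is linear in the $s_j$; the same is true of $\lamerror'$, and the remaining term in the formula~\eqref{eq:H_error_formula} for $\Herror$ equals $-\lambda'(t)+\lambda'(t)\sum_{k=1}^{\Delta-2}\frac{\lambda^k(t)}{k!}r_{k+2}$ (using $r_2=-1$), which is affine-linear in $(r_3,\dots,r_\Delta)$. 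Now $\Rspace$ is the simplex whose extreme points are the vectors $\underline r^{(k)}$, $k\in\{3,\dots,\Delta\}$, with $r_k=1$ and all other coordinates zero; since an affine-linear function commutes with affine combinations whose coefficients sum to $1$, we get $\Herror(t)=\sum_{k=3}^\Delta r_k\,\mathcal H_{\underline r^{(k)}}(t)$ for every $\underline r\in\Rspace$. Because $r_k\ge 0$ and $\sum_{k=3}^\Delta r_k=1$, at least one $r_k$ is positive, so it suffices to produce, for each $k\in\{3,\dots,\Delta\}$, a threshold $\theta_k$ with $\mathcal H_{\underline r^{(k)}}(t)>0$ for $t\ge\theta_k$, and then take $\theta:=\max_k\theta_k$.

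For a fixed $k$ and $\underline r=\underline r^{(k)}$ we have $s_j=\indic{3\le j\le k}$, so the polynomials in~\eqref{eq:H_error_int} and~\eqref{eq:lam_error_ode} collapse to partial sums of $\mathrm e^\lambda$. A Laplace-type expansion of the integral in~\eqref{eq:lam_error_solution} (equivalently, iterated l'Hôpital's rule as in the proof of Claim~\ref{cl:asymp}), together with $\lambda'(t)=\mathrm e^{-\lambda(t)}(1+\lambda(t))$, gives $\lamerror(t)=\frac{\lambda^{k-2}(t)}{(k-1)!}+\mathcal O\big(\lambda^{k-3}(t)\big)$. Substituting this (and the resulting asymptotics for $\lamerror'$, obtained from~\eqref{eq:lam_error_ode}) into the explicit formula~\eqref{eq:H_error_formula} for $\Herror$, the top-order $\lambda^{k-1}(t)$ contributions of $\lambda(t)\lamerror(t)$ and of the polynomial terms cancel, and one is left with
\[
\Herror(t)=\mathrm e^{-2\lambda(t)}\left(\frac{k-2}{(k-1)!}\,\lambda^{k-1}(t)+\mathcal O\big(\lambda^{k-2}(t)\big)\right).
\]
Since $k\ge 3$, the coefficient $\tfrac{k-2}{(k-1)!}$ is strictly positive, so $\Herror(t)>0$ once $\lambda(t)$ — equivalently $t$, by Claim~\ref{cl:lambda_bounds} — is large enough; this is the desired $\theta_k$.

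The main obstacle is uniformity in $\underline r$: the order of growth of $\lamerror$ and of the polynomial terms in~\eqref{eq:H_error_formula} depends on which degree constraints larger than $2$ are present and with what (possibly tiny) relative weights, so a direct, uniform-in-$\underline r$ asymptotic estimate of $\Herror$ is unwieldy. The affine-linearity reduction to the finitely many extreme points of $\Rspace$ is what makes the uniformity free. The only remaining delicate point is keeping track of the exact cancellation of the $\lambda^{k-1}$ terms, which is precisely what determines the sign of the leading term of $\Herror$ and makes the condition $k\ge 3$ (i.e., the presence of genuine hubs) enter the argument.
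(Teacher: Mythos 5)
Your proof is correct and takes essentially the same approach as the paper's: decompose $\Herror$ via its linearity in $\underline{r}$ into contributions from the extreme points $\underline{r}^{(k)}$ of the simplex $\Rspace$, and then determine the sign of the leading-order asymptotic term for each fixed $k$. The only difference is that the paper's own proof merely asserts that the leading coefficient of $\mathcal{H}_{\underline{r}^{(k)}}(t)$ is positive, whereas you carry out the explicit Laplace-type computation and arrive at $\Herror(t) = \mathrm{e}^{-2\lambda(t)}\bigl(\frac{k-2}{(k-1)!}\lambda^{k-1}(t) + \mathcal{O}(\lambda^{k-2}(t))\bigr)$ for $\underline{r}=\underline{r}^{(k)}$, which I have checked is correct.
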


\begin{proof}
Observe that $\Herror(t)$ is linear in~$\underline{r}$ (cf.~\eqref{eq:H_error_formula}, \eqref{eq:lam_error_ode}, \eqref{eq:lam_error_solution}, \eqref{eq:sk_def}). For any $d \in \{3, \dots, \Delta\}$, we define $\underline{r}^{(d)} \in \Rspace$ as
\begin{equation*}
\underline{r}^{(d)} = (r^{(d)}_k)_{k=2}^{\Delta}
\quad \text{such that} \quad
r^{(d)}_2 = -1, \; r^{(d)}_d = 1 \text{ and } r^{(d)}_k = 0 \text{ for any } k \notin\{2,d\}.
\end{equation*}

Then by \eqref{eq:R_space} for any $\underline{r} \in \Rspace$, we have $\underline{r} = \sum_{d=3}^{\Delta} r_d \cdot \underline{r}^{(d)}$. Therefore, we can decompose $\Herror(t)$ as
\begin{equation}\label{eq:Herror_decomposition}
\Herror(t) = \sum \limits_{d=3}^{\Delta} r_d \cdot \mathcal{H}_{\underline{r}^{(d)}}(t).
\end{equation}

Using~\eqref{eq:H_error_formula}, \eqref{eq:lam_error_ode} and \eqref{eq:lam_error_bounds}, it can be shown that the leading order term of~$\mathcal{H}_{\underline{r}^{(d)}}(t)$ has positive coefficient for any $d \in \{3, \dots, \Delta\}$. Thus, for each fixed $d \in \{3, \dots, \Delta\}$, there exists $\theta_d \in \mathbb{R}_+$ such that $\mathcal{H}_{\underline{r}^{(d)}}(t) > 0$ for any $t > \theta_d$. By the decomposition~\eqref{eq:Herror_decomposition}, $\theta := \max \{\,\theta_d \;|\; d = 3, \dots, \Delta\}$ satisfies the statement.
\end{proof}

Now we are ready to prove Lemma~\ref{lem:diff_functions_2nd_lemma}.

\begin{proof}[Proof of Lemma~\ref{lem:diff_functions_2nd_lemma}]
Fix $\delta \in (0,1)$ and ${K \in \mathbb{R}}$. We prove \eqref{eq:diff_aux_int_tunder_pos}, the proof of~\eqref{eq:diff_aux_int_tover_pos} is analogous. Let us define the function ${w_{\underline{p},K}(t) := \lambda(t)+\varepsilon K \werror(t)}$, where $\underline{p} \in \nonreg$ and the $\varepsilon$ and the $\underline{r} = (r_k)_{k=2}^\Delta$ corresponding to $\underline{p}$ are defined in~\eqref{eq:eps_rk_def}. Thus,~\eqref{eq:diff_aux_int_tunder_pos} is equivalent to 
\begin{equation}\label{eq:w_K_Herror_int}
\int \limits_t^{\tunder} w_{\underline{p},K}(s) \Herror(s) \, \mathrm{d}s \ge 0
\qquad \text{for any} \qquad
t \in [0,\tunder].
\end{equation}

In order to prove~\eqref{eq:w_K_Herror_int}, we use Claim~\ref{cl:Herror_positivity} that provides $\theta > 0$ such that $\Herror(s) > 0$ for any $\underline{r} \in \Rspace$ and~${s \ge \theta}$. Note that we can assume that ${\tunder > 2\theta}$, since if $\varepsilon_0$ is small enough, then it holds for any ${\underline{p} \in \nearreg{\varepsilon_0}}$.
We split our analysis into two cases based on the value of~$t$. In both cases we apply Lemma~\ref{lem:monotonicity} and we set~${\overline{K}:=2|K|}$.

\begin{itemize}
\item If $t \in (\theta, \tunder]$:

By Lemma~\ref{lem:monotonicity} (with $\overline{K}=2|K|$), if $\varepsilon_0$ is small enough, then for any $\underline{p} \in \nearreg{\varepsilon_0}$, the function~$w_{\underline{p},K}(\cdot)$ is non-negative on the interval~$[0,\toverone]$. Therefore, both $w_{\underline{p},K}(\cdot)$ and $\Herror(\cdot)$ are non-negative on the interval~${[t, \tunder]}$, hence~\eqref{eq:w_K_Herror_int} trivially holds.

\item If $t \in [0,\theta]$:

We express the integral in~\eqref{eq:w_K_Herror_int} using integration by parts, noting that $\Herror(s) = -\Herrorint'(s)$ (cf.~\eqref{eq:H_error_def}):
\begin{equation}\label{eq:wK_Herror_int_partial}
\int \limits_t^{\tunder}w_{\underline{p},K}(s) \Herror(s) \, \mathrm{d}s = w_{\underline{p},K}(t) \Herrorint(t) - w_{\underline{p},K}(\tunder) \Herrorint(\tunder) + \int \limits_t^{\tunder} w_{\underline{p},K}'(s) \Herrorint(s) \, \mathrm{d}s.
\end{equation}

We study the three terms on the right-hand side of~\eqref{eq:wK_Herror_int_partial} separately. We show that the first term is non-negative, the second term vanishes as $\varepsilon\to 0$ and the third term is positive and bounded away from~0, i.e., the sum is strictly positive if $\varepsilon$ is small enough.

\begin{enumerate}
\item By Claim~\ref{cl:H_error_explicit} and Lemma~\ref{lem:monotonicity} (with $\overline{K}=2|K|$), we obtain that the first term on the right-hand side of~\eqref{eq:wK_Herror_int_partial} is non-negative for any $\underline{p} \in \nearreg{\varepsilon_0}$ if~$\varepsilon_0$ is small enough.

\item By Claim~\ref{cl:asymp}, the second term on the right-hand side of~\eqref{eq:wK_Herror_int_partial} converges to $0$ as ${\varepsilon \to 0}$.

\item If $\varepsilon$ is small enough, then for any~${\underline{r} \in \Rspace}$ and $t \in [0,\toverone]$, we obtain
\begin{equation}\label{eq:third_term_lower_bound}
\Herrorint(s) \stackrel{\eqref{eq:H_error_int},\, \eqref{eq:lam_error_bounds}}{\ge} \frac 12 \mathrm{e}^{-\lambda(s)} \cdot \lambda(s)
\quad \text{and} \quad
w'_{\pfromr,K}(t) \stackrel{\eqref{eq:w_error_derivative_goal}}{\ge} \frac{\lambda'(t)}{2},
\end{equation}
where~$\pfromr$ is defined in Notation~\ref{not:p(eps,r)} and~\eqref{eq:w_error_derivative_goal} is used with $\overline{K}=2|K|$. Therefore, the third term on the right-hand side of~\eqref{eq:wK_Herror_int_partial} is strictly positive and bounded away from 0, in particular
\begin{equation*}
\int \limits_t^{\tunder} w_{\underline{p},K}'(s) \Herrorint(s) \, \mathrm{d}s 
\stackrel{\eqref{eq:third_term_lower_bound}}{\ge}
\frac14 \int \limits_\theta^{2\theta} \lambda'(s) \cdot \mathrm{e}^{-\lambda(s)} \cdot \lambda(s) \, \mathrm{d}s.
\end{equation*}
\end{enumerate}
Thus, we can conclude that~\eqref{eq:w_K_Herror_int} holds for any~${t \in [0,\theta]}$ if $\varepsilon$ is small enough.
\end{itemize}

Therefore, the proof of~\eqref{eq:diff_aux_int_tunder_pos} is complete.
\end{proof}

Now we can conclude Lemma~\ref{lem:diff_functions_prop}.
We substitute the explicit first-order Taylor approximations (cf.~\eqref{eq:H_int_under_over_def}, \eqref{eq:w_approx}) into the definitions of the difference functions $\diffunder$, $\diffover$ and expand them in terms of the small parameter~$\varepsilon$. Then, the fact that the desired inequalities are satisfied at the right endpoint of the given interval (i.e.,~\eqref{eq:difference_under_prop}(a) and~\eqref{eq:difference_over_prop}(a) are fulfilled) follows from the choice of~$\tunder$ and $\tover$ and the limits stated in Claim~\ref{cl:diff_functions_1st_lemma}. To establish \eqref{eq:difference_under_prop}(b) and~\eqref{eq:difference_over_prop}(b), we exploit the fact that~$\werror$ satisfies the ODE~\eqref{eq:w_error_ivp}, which causes some terms to cancel. The inequalities then follow from Lemma~\ref{lem:diff_functions_2nd_lemma}.

\needspace{3\baselineskip}
\begin{proof}[Proof of Lemma~\ref{lem:diff_functions_prop}]
First, we show \eqref{eq:difference_under_prop}. The proof of~\eqref{eq:difference_over_prop} is analogous to the proof of~\eqref{eq:difference_under_prop}.
\begin{itemize}
\item Proof of~\eqref{eq:difference_under_prop}:

First, we substitute the definitions of~$\wunder$ and $\tauover{\delta/3}$ into the definition of~$\diffunder$ to obtain a formula that we use in the proof of both inequalities of~\eqref{eq:difference_under_prop}. In particular, for any ${t \in [0,\tunder]}$, we have
\begin{equation}
\label{eq:diff_under_rewrite}
\diffunder(t) \stackrel{\eqref{eq:diff_under_def},\, \eqref{eq:tau_pm_def},\,\eqref{eq:w_approx}}{=}
\wunder'(t) - \int \limits_t^{\tunder}\wunder(s) \cdot \left(-\Hintover{\delta/3}'(s) \right) \, \mathrm{d}s -
\Hintover{\delta/3}(\tunder) \cdot \wunder(\tunder).
\end{equation}

\begin{enumerate}[label=(\alph*)]
\item In order to prove the inequality~\eqref{eq:difference_under_prop}(a), we substitute $t=\tunder$ into~\eqref{eq:diff_under_rewrite} and the explicit formulas for~$\wunder$, $\Hintover{\delta/3}$ and expand it in terms of~$\varepsilon$:
\begin{align}
\begin{split}\label{eq:diff_under_a_with_eps}
\diffunder(\tunder) \stackrel{\eqref{eq:diff_under_rewrite}}{=}& \wunder'(\tunder) -
\Hintover{\delta/3}(\tunder) \cdot \wunder(\tunder)
\stackrel{\eqref{eq:w_approx}, \, \eqref{eq:H_int_under_over_def}}{=}\\
&\lambda'(\tunder) +\left(1+\frac{\delta}{2}\right)\varepsilon\werror'(\tunder) -\\
&\left[ I(\tunder) + \left(1+\frac{\delta}{3}\right)\varepsilon \cdot \Herrorint(\tunder) \right]
\cdot
\left[ \lambda(\tunder) +\left(1+\frac{\delta}{2}\right)\varepsilon\werror(\tunder) \right]
\stackrel{\eqref{eq:lambda_2reg_ode},\, \eqref{eq:I(t)_explicit}}{=}\\
&\mathrm{e}^{-\lambda(\tunder)}+
\varepsilon \cdot \left(1+\frac{\delta}{2}\right) \cdot \left[\werror'(\tunder)-
\mathrm{e}^{-\lambda(\tunder)} \werror(\tunder)\right] -\\
&\varepsilon \cdot \left(1+\frac{\delta}{3} \right) \cdot \Herrorint(\tunder)\lambda(\tunder) -\varepsilon^2 \cdot \left(1+\frac{\delta}{2}\right) \left(1+\frac{\delta}{3} \right) \cdot \Herrorint(\tunder)\werror(\tunder).
\end{split}
\end{align}

If we multiply the above formula by~$\exp(\lambda(\tunder))$, we obtain that~\eqref{eq:difference_under_prop}(a) is equivalent to
\begin{align}
\begin{split}\label{eq:diff_under_1st_goal}
1+&\varepsilon \cdot \left(1+\frac{\delta}{2}\right) \cdot \left(\mathrm{e}^{\lambda(\tunder)}\werror'(\tunder)-\werror(\tunder)\right) - 
\varepsilon \cdot \left(1+\frac{\delta}{3} \right) \cdot \mathrm{e}^{\lambda(\tunder)}\Herrorint(\tunder)\lambda(\tunder) -\\
&\varepsilon^2 \cdot \left(1+\frac{\delta}{2}\right) \cdot \left(1+\frac{\delta}{3} \right) \cdot \mathrm{e}^{\lambda(\tunder)}\Herrorint(\tunder)\werror(\tunder) \ge 0.
\end{split}
\end{align}

Therefore, it is enough to see that the limit of the left-hand side of \eqref{eq:diff_under_1st_goal} as $\varepsilon \to 0$ is strictly positive. We show that the limit of the sum of the first two terms is strictly positive and the other two terms vanish as $\varepsilon \to 0$. For that purpose, we use the limits stated in Claim~\ref{cl:diff_functions_1st_lemma}.

\begin{itemize}
\item On one hand, if we consider the first two terms of the left-hand side of \eqref{eq:diff_under_1st_goal}, we obtain
\begin{align}
\begin{split}\label{eq:diff_under_limit}
\lim \limits_{\varepsilon \to 0} 1+\varepsilon \cdot \left(1+\frac{\delta}{2}\right) \cdot \left(\mathrm{e}^{\lambda(\tunder)}\werror'(\tunder)-\werror(\tunder)\right) \stackrel{\eqref{eq:aux_limit}}{=}&
1-\left(1+\frac{\delta}{2}\right) \cdot \lim \limits_{\varepsilon \to 0} \, \varepsilon \cdot \Upsilon_{\underline{r}} \cdot \tunder \stackrel{\eqref{eq:tc_approx}}{=}\\
&1-\left(1+\frac{\delta}{2}\right)(1-\delta) = 
\frac{\delta}{2}+\frac{\delta^2}{2},
\end{split}
\end{align}
which is strictly positive for any $\delta \in (0,1)$.

\item On the other hand, by the definition of $\tunder$ (see~\eqref{eq:tc_approx}), equations \eqref{eq:aux_limit2} and \eqref{eq:aux_limit3}, we obtain that the last two terms of the left-hand side of \eqref{eq:diff_under_1st_goal} converge to $0$ as $\varepsilon \to 0$.
\end{itemize}

Thus, \eqref{eq:diff_under_1st_goal} holds if $\varepsilon$ is small enough, therefore, the proof of~\eqref{eq:difference_under_prop}(a) is complete.

\item Now we prove the inequality \eqref{eq:difference_under_prop}(b). By differentiating the formula obtained in~\eqref{eq:diff_under_rewrite}, substituting the explicit formulas of~$\wunder$, $\Hintover{\delta/3}$, using that~$\werror$ solves the ODE~\eqref{eq:w_error_ivp}, and expanding the formula in terms of~$\varepsilon$, we obtain that for any $t \in [0,\tunder]$, we have
\begin{align}
\begin{split}\label{eq:diff_under_derivative}
\diffunder'(t) \stackrel{\eqref{eq:diff_under_rewrite}}{=}& \wunder''(t) + \wunder(t) \cdot \left(-\Hintover{\delta/3}'(t) \right)\stackrel{\eqref{eq:w_approx}, \, \eqref{eq:H_int_under_over_def}}{=}\lambda''(t) +\left(1+\frac{\delta}{2}\right)\varepsilon\werror''(t) +\\
&\left[ \lambda(t) +\left(1+\frac{\delta}{2}\right)\varepsilon\werror(t)\right] \cdot \left[H(t) + \left(1+\frac{\delta}{3}\right)\varepsilon \cdot \Herror(t)\right]\stackrel{\substack{\eqref{eq:lambda_2reg_ode},\, \eqref{eq:I(t)_explicit},\\ \eqref{eq:w_error_ivp}}}{=}\\
&-\varepsilon \cdot \frac{\delta}{6} \cdot \lambda(t)\Herror(t)
+\varepsilon^2 \cdot \left(1+\frac{\delta}{2}\right)\left(1+\frac{\delta}{3}\right) \cdot \werror(t) \Herror(t).
\end{split}
\end{align}

Now~\eqref{eq:difference_under_prop}(b) follows from~\eqref{eq:diff_under_derivative} and the inequality~\eqref{eq:diff_aux_int_tunder_pos} with $K=-\frac{(1+{\delta}/{2})(1+{\delta}/{3})}{\delta/6}$.
\end{enumerate}

\item Proof of~\eqref{eq:difference_over_prop}:

\begin{enumerate}[label=(\alph*)]
\item In order to prove the inequality~\eqref{eq:difference_over_prop}(a), similarly to~\eqref{eq:diff_under_a_with_eps}, we can derive that
\begin{align}
\begin{split}\label{eq:diff_over_rewrite}
\diffover(\tover) =&\mathrm{e}^{-\lambda(\tover)}+
\varepsilon \cdot \left(1-\frac{\delta}{2}\right) \cdot \left[\werror'(\tover)-
\mathrm{e}^{-\lambda(\tover)} \werror(\tover)\right] -\\
&\varepsilon \cdot \left(1-\frac{\delta}{3} \right) \cdot \Herrorint(\tover)\lambda(\tover) -\varepsilon^2 \cdot \left(1-\frac{\delta}{2}\right) \left(1-\frac{\delta}{3} \right) \cdot \Herrorint(\tover)\werror(\tover).
\end{split}
\end{align}
After scaling~\eqref{eq:diff_over_rewrite} by~$\exp(\lambda(\tover))$ and taking the limit $\varepsilon \to 0$, we obtain that 
\begin{itemize}
\item the sum of the main terms (i.e., the first two terms) converges to~${-\delta/2+\delta^2/2}$ (using~\eqref{eq:aux_limit}, analogously to~\eqref{eq:diff_under_limit}), which is strictly negative for any $\delta \in (0,1)$,
\item the other terms converge to $0$ (cf.~\eqref{eq:aux_limit2} and~\eqref{eq:aux_limit3}).
\end{itemize}
This completes the proof of~\eqref{eq:difference_over_prop}(a).

\item The inequality~\eqref{eq:difference_over_prop}(b) can be concluded similarly to~\eqref{eq:difference_under_prop}(b) by observing that (analogously to~\eqref{eq:diff_under_derivative}) for any $t \in [0,\tover]$, we have
\begin{equation*}
\diffover'(t) = \varepsilon \cdot \frac{\delta}{6} \cdot \lambda(t)\Herror(t)
+\varepsilon^2 \cdot \left(1-\frac{\delta}{2}\right)\left(1-\frac{\delta}{3}\right) \cdot \werror(t) \Herror(t)
\end{equation*}
and then using~\eqref{eq:diff_aux_int_tover_pos} with $K=\frac{(1-{\delta}/{2})(1-{\delta}/{3})}{\delta/6}$.
\end{enumerate}
\end{itemize}
Thus the proof of Lemma~\ref{lem:diff_functions_prop} is complete, which completes the proof of Lemma~\ref{lem:sufficient_inequalities}, which completes the proof of Lemma~\ref{lem:eigenfunction_bounds}, which in turn completes the proof of our main result Theorem~\ref{thm:tc_asymp_behavior_cont}.
\end{proof}

\textbf{\large Acknowledgments:}
The authors were partially supported by the grant NKFI-FK-142124 of NKFI (National Research, Development and Innovation Office), and by the ERC Synergy Grant No.\ 810115 - DYNASNET.

\bibliographystyle{plain}

\end{document}